\numberwithin{equation}{section}
\numberwithin{table}{section}
\numberwithin{figure}{section}
\theoremstyle{plain}
\newtheorem{theorem}{Theorem}[section]
\newtheorem{lemma}[theorem]{Lemma}
\theoremstyle{definition}
\newtheorem{definition}{Definition}[section]
\newtheorem{assumption}[definition]{Assumption}
\newtheorem{condition}[definition]{Condition}
\theoremstyle{remark}
\newtheorem*{remark}{Remark}
\newcommand{\stoich}{\mathbb{N}}
\newcommand{\Realdom}{\mathbf{R}}
\newcommand{\Intdom}{\mathbf{Z}}
\newcommand{\cadlag}{c{\`a}dl{\`a}g}
\DeclareMathOperator{\Expect}{\mathbb{E}}
\newcommand{\Prob}{\mathbf{P}}
\newcommand{\Probspace}{\Omega}
\newcommand{\Probelem}{\omega}
\newcommand{\Probfiltr}{\mathcal{F}}
\newcommand{\stopping}{P}
\newcommand{\Sstopping}{\bar{P}}
\newcommand{\StopT}[1]{\hat{#1}}
\newcommand{\loc}{\mathrm{loc}}
\newcommand{\Sspace}[1]{S_{\Probfiltr}^{#1,\loc}(\Intdom_{+}^{D \times J})}
\newcommand{\SspaceS}[1]
           {S_{\Probfiltr}^{#1,\loc}(S^{-1}\Intdom_{+}^{D \times J})}
\newcommand{\SspaceZ}[1]
           {S_{\Probfiltr}^{#1,\loc}([\Intdom_{+}^{D_1 \times J};\,
               \Realdom_{+}^{D_2 \times J}])}
\newcommand{\Vol}{V_{tot}}
\newcommand{\VoxVol}{V}
\newcommand{\MeshSet}{\mathcal{M}}
\DeclareMathOperator{\diam}{diam}
\DeclareMathOperator{\diag}{diag}
\newcommand{\ScaleS}{S}
\newcommand{\ScaleX}{\bar{X}}
\newcommand{\ScaleY}{{\bar{Y}^{(h)}}}
\newcommand{\ScaleZ}{\bar{Z}}
\newcommand{\Group}{G}
\newcommand{\wvec}{\boldsymbol{w}}
\newcommand{\wvect}{\boldsymbol{w}^{T}}
\newcommand{\lvec}{\boldsymbol{l}}
\newcommand{\lvect}{\boldsymbol{l}^{T}}
\newcommand{\onevec}{\boldsymbol{1}}
\newcommand{\one}{\mathbbm{1}}
\newcommand{\onet}{\mathbbm{1}^T}
\newcommand{\wnorm}[1]{\left\| #1 \right\|_{\wvec}}
\newcommand{\wunorm}[1]{\left\| #1 \right\|_{\wvec,1}}
\newcommand{\lnorm}[1]{\left\| #1 \right\|_{\lvec}}
\newcommand{\lunorm}[1]{\left\| #1 \right\|_{\lvec,1}}
\newcommand{\pnorm}[1]{\left\| #1 \right\|_{2}}
\newcommand{\LipPr}{\bar{L}}
\newcommand{\lec}{\le_{C}}
\newcommand{\lipa}{a}
\newcommand{\lipb}{b}
\begin{document}

\title[Multiscale splitting for spatial stochastic kinetics]{Pathwise
  error bounds in Multiscale variable splitting methods for spatial
  stochastic kinetics}

\author[A. Chevallier]{Augustin Chevallier}
\address{INRIA Sophia Antipolis \\
  2004 Route des Lucioles,\\
  06902 Valbonne, France.}
\email{augustin.chevallier@ens-cachan.org}

\author[S. Engblom]{Stefan Engblom}
\address{Division of Scientific Computing \\
  Department of Information Technology \\
  Uppsala University \\
  SE-751 05 Uppsala, Sweden.}
\urladdr{\url{http://user.it.uu.se/~stefane}}
\email{stefane@it.uu.se}
\thanks{Corresponding author: S. Engblom, telephone +46-18-471 27 54,
  fax +46-18-51 19 25.}

\keywords{Hybrid mesoscopic model; Mean square bounds; Continuous-time
  Markov chain; Jump process; Rate equation}

\subjclass[2010]{Primary: 65C20, 60J22; Secondary: 65C40, 60J27}
%
%

\date{\today}

\begin{abstract}
  Stochastic computational models in the form of pure jump processes
  occur frequently in the description of chemical reactive processes,
  of ion channel dynamics, and of the spread of infections in
  populations. For spatially extended models, the computational
  complexity can be rather high such that approximate multiscale
  models are attractive alternatives. Within this framework some
  variables are described stochastically, while others are
  approximated with a macroscopic point value.

  We devise theoretical tools for analyzing the pathwise multiscale
  convergence of this type of variable splitting methods, aiming
  specifically at spatially extended models. Notably, the conditions
  we develop guarantee well-posedness of the approximations without
  requiring explicit assumptions of \textit{a priori} bounded
  solutions. We are also able to quantify the effect of the different
  sources of errors, namely the \emph{multiscale error} and the
  \emph{splitting error}, respectively, by developing suitable error
  bounds. Computational experiments on selected problems serve to
  illustrate our findings.
\end{abstract}

\selectlanguage{english}

\maketitle


\section{Introduction}

Mesoscopic spatially extended stochastic models are in frequent use in
many fields, with notable examples found in cell biology,
neuroscience, and epidemiology. The traditional macroscopic
description is a partial differential equation (PDE) governing the
flow of concentration field variables in a generalized
reaction-transport process. Whenever a certain concentration is small
enough, discrete stochastic effects become more pronounced, thus
invalidating the assumptions behind the macroscopic model. An
alternative is then to turn to a mesoscopic stochastic model, a
continuous-time Markov chain over a discrete state-space. This model
often remains accurate at an acceptable computational complexity.

In the traditional non-spatial, or well-stirred setting, early work by
Kurtz connected theses two descriptions via limit theorems, showing
essentially that continuous approximations emerge in the limit of
large molecular numbers, sometimes referred to as the ``thermodynamic
limit''. Strong approximation theorems in the same setting were later
also developed (for more of this, see the monograph \cite{Markovappr}
and the references therein).

\emph{Multiscale-}, or hybrid descriptions, in which the two scales
are blended has since attracted many researchers. The focus of the
research tend to fall into one of two categories; either
``theoretical'' and concerning error bounds and rate of convergence,
or more ``practical'' by developing actual implementations and general
software.

In the first category, tentative analysis of specific examples are
found in \cite{kurtz_multiscale}, while \cite{plyasunov_mscaleMMS,
  TJahnkeMMS2012} are of more general character and based on averaging
techniques, and conditional expectations, respectively. A related
analysis in the sense of mean-square convergence for operator
splitting techniques is found in \cite{jsdesplit}. In
\cite{kurtz_multiscale2} the issue of a proper scaling is stressed and
similar remarks are made in \cite{AGangulyMMS2014}, where notably, a
practical multiscale simulation algorithm is also devised.

Towards the more algorithmic side, an early suggestion for a hybrid
method in \cite{haseltine_HSSA} came to be followed up by several
others \cite{adaptiveHSSA, Hy3S, HyMSMC}. Related multiscale
algorithms based on quasi equilibrium assumptions are found in
\cite{nestedSSA, slowSSA}, and the method in \cite{ssa_parareal}
relied on the macroscale description as a preconditioner to bring out
parallelism.

With few exceptions \cite{parallel_LKMC, pseudo_compartment}, the main
body of work has been done in the well-stirred (or 0-dimensional)
setting. Since the work \cite{master_spatial} and the software
described in \cite{URDMEpaper}, however, it is fairly well understood
how \emph{spatial} models are to be developed. Here the computational
complexity is much higher such that multiscale methods appear as a
very attractive alternative. This is the starting point for the
present contribution.

The goal with the analysis of the paper is twofold. We will firstly
deal with the multiscale analysis required for the splitting of the
state variable into a stochastic and a deterministic part,
respectively. Secondly, we will also deal with the numerical analysis
relied upon when designing a basic but representative
time-discretization of this approximating process.

The paper is organized as follows: below we first summarize the main
results of the paper. In \S\ref{sec:jsdes} we work through the
description of mesoscopic reactive processes as continuous-time Markov
chains with a focus on the spatial case. A substantial effort is made
to avoid any possibly circular assumptions on the solution regularity,
but rather to prove all results within a single coherent
framework. The analysis of the multiscale approximation is found in
\S\ref{sec:analysis}, where error bounds for both the
\emph{multiscale} and the \emph{splitting} errors are developed. Our
approach is pathwise in the sense that the errors are measured in
$L_2$ over a single probability space. Selected numerical examples are
presented in \S\ref{sec:examples}, and a concluding discussion is
offered in \S\ref{sec:conclusions}.

\subsection{Summary of main results}

A brief orientation of the technical results of the paper is as
follows:
\begin{enumerate}
\item \label{it:stab1} Theorem~\ref{th:exist0} proves a strong
  regularity result for the type of spatial reactive processes
  considered in the paper.
\item Theorem~\ref{th:exist1} proves the corresponding result in the
  setting of a multiscale framework. In particular, this reveals
  partial assumptions for when a multiscale description is meaningful.
\item \label{it:stab2} Theorems~\ref{th:exist2} and \ref{th:exist3}
  similarly develop regularity results for the \emph{multiscale-} and
  the \emph{split-step approximations}, respectively.
\item \label{it:conv1} Theorems~\ref{th:ScalingErrorBounded} and
  \ref{th:ScalingError} provide for a multiscale convergence theory
  when parts of the dynamics is approximated via deterministic terms.
\item \label{it:conv2} Theorems~\ref{th:spliterr2} and
  \ref{th:spliterr1} similarly provides for a convergence theory of
  split-step methods in a general multiscale setting.
\end{enumerate}
In this list, items \ref{it:stab1}--\ref{it:stab2} proves
well-posedness and stability for the various involved
processes. Following the celebrated Lax principle, items
\ref{it:conv1}--\ref{it:conv2} next proves convergence and error
estimates by an investigation of the consistency in the different
approximations.


\section{Mesoscopic spatial stochastic kinetics}
\label{sec:jsdes}

We devote this section to some technical developments;
\S\ref{subsec:rdme}--\ref{subsec:mesh} summarize reaction-transport
type modeling over irregular lattices, and regularity results under
suitable model assumptions are developed in
\S\ref{subsec:RDMEstab}. The variable splitting setup to be studied is
similarly detailed in \S\ref{subsec:scaling}--\ref{subsec:splitting},
where the corresponding regularity results are evaluated anew.

Throughout the paper we shall remain in the framework of
continuous-time Markov processes on a discrete state-space, albeit
with some special structure imposed from the spatial context. Assuming
a process $X(t) \in \Intdom_{+}^{D}$ counting at time $t$ the number
of entities in each of $D$ compartments, a set of $R$ state
transitions $X \mapsto X-\stoich_{r}$ is generally prescribed by
\begin{align}
  \label{eq:prop}
  \Prob\left[X(t+dt) = x-\stoich_{r}| \; X(t) = x\right] &=
  w_{r}(x) \, dt+o(dt),
\end{align}
for $r = 1\ldots R$. To enforce a conservative chain which remains in
$\Intdom_{+}^{D}$, we assume $w_{r}(x) = 0$ whenever
$x-\stoich_{r} \not \in \Intdom_{+}^{D}$.

\subsection{Continuous-time Markov chains on irregular lattices}
\label{subsec:rdme}

In the traditional well-stirred setting we have $D$ species
interacting according to $R$ chemical reactions in some fixed volume
$\Vol$. Given an initial state $X(0)$, the dynamics is then fully
described by the \emph{stoichiometric matrix} $\stoich \in \Intdom^{D
  \times R}$, and $w(x) \equiv [w_{1}(x), \ldots, w_{R}(x)]^{T}$, the
set of \emph{propensities}. Assuming a probability space
$(\Probspace,\Probfiltr,\Prob)$ supporting $R$-dimensional Poisson
processes, the state is evolved according to
\cite[Chap.~6.2]{Markovappr}
\begin{align}
  \label{eq:Poissrepr}
  X_i(t) &= X_i(0)-\sum_{r = 1}^{R} \stoich_{ri} \Pi_{r}
  \left(  \int_{0}^{t} w_{r}(X(s)) \, ds \right),
\end{align}
for species $i = 1\ldots D$ and with standard unit-rate independent
Poisson processes $\Pi_{r}$, $r = 1\ldots R$.

If the assumption of a spatially uniform distribution no longer holds
a notation for spatial dependency needs to enter. The given continuous
volume $\Vol$ is discretized into $J$ smaller voxels $(\VoxVol_j)_{j =
  1}^J$ and the state $X \in \Intdom_{+}^{D \times J}$, where $X_{ij}$
is the number of molecules of the $i$th species in the $j$th
voxel. The assumption of global homogeneity is replaced with a
\emph{local} assumption about uniformity in each voxel such that the
dynamics \eqref{eq:Poissrepr} may be used anew on a per-voxel
basis. Adding suitable terms covering any specified transport process
we get
\begin{align}
  \label{eq:RDMEPoissrepr}
  X_{ij}(t) = X_{ij}(0) &-
  \sum_{r = 1}^{R} \stoich_{ri} \Pi_{rj}  
  \left( \int_{0}^{t} w_{rj}(X_{\cdot ,j}(s)) \, ds \right) \\
  \nonumber
  &-\sum_{k = 1}^{J} \Pi_{ijk}' 
  \left( \int_{0}^{t} q_{ijk}X_{ij}(s) \, ds \right) \\
  \nonumber
  &+\sum_{k = 1}^{J} \Pi_{ikj}'
  \left( \int_{0}^{t} q_{ikj}X_{ik}(s) \, ds \right),
\end{align}
where $q_{ijk}$ is the rate per unit of time for species $i$ to move
from the $j$th voxel to the $k$th.

An important consequence of the integral representation
\eqref{eq:RDMEPoissrepr} is \emph{Dynkin's formula}
\cite[Chap.~9.2.2]{BremaudMC}. For $f:\Intdom_{+}^{D \times J} \to
\Realdom$ a suitable function,
\begin{align}
  \nonumber
  \Expect&\left[f(X(\StopT{t}))-f(X(0))\right] = \\
  \label{eq:Dynkin}
  &\Expect\Biggl[ \int_{0}^{\StopT{t}}
    \sum_{j = 1}^J \sum_{r = 1}^{R} w_{rj}(X_{\cdot,j}(s)) \left[ 
      f(X(s)-\stoich_{r}\onet_j)-
      f(X(s)) \right] \, ds \Biggr] \\
  \nonumber
  +&\Expect\Biggl[ \int_{0}^{\StopT{t}} \sum_{j,k = 1}^J \sum_{i = 1}^D
    q_{ijk} X_{ij}(s) \left[ f(X(s)-\one_i\onet_j+\one_i\onet_k)-
      f(X(s)) \right] \, ds \Biggr] \\
  \nonumber
  +&\Expect\Biggl[ \int_{0}^{\StopT{t}} \sum_{j,k = 1}^J \sum_{i = 1}^D
    q_{ikj} X_{ik}(s) \left[ f(X(s)+\one_i\onet_j-\one_i\onet_k)-
      f(X(s)) \right]\, ds \Biggr],
\end{align}
expressed in terms of the stopped process $X(\StopT{t}) = X(t \wedge
\tau_{\stopping})$ for a stopping time $\tau_{\stopping} := \inf_{t
  \ge 0}\{\|X(t)\| > \stopping\}$ in some suitable norm, and $P > 0$
an arbitrary real number. In \eqref{eq:Dynkin}, $\one_j$ is an
all-zero column vector of suitable height and with a single 1 at
position $j$.

\subsection{Mesh regularity}
\label{subsec:mesh}

The subdivision of the total volume $\Vol$ into smaller voxels is in
principle arbitrary. However, any meaningful analysis will clearly
depend to some extent on the regularity of this discretization.

\begin{definition}[\textit{Mesh regularity parameters}]
  \label{def:mesh}
  We consider a geometry in $d$ dimensions and total volume $\Vol$,
  discretized by any member in the set of meshes $\MeshSet$. For any
  such mesh $M \in \MeshSet$ consisting of voxel volumes
  $(\VoxVol_j)_{j=1}^{J}$ we assume that it holds that
  \begin{align}
    \label{eq:uniform_mesh}
    &m_\VoxVol  \bar{\VoxVol}_{M} \le \VoxVol_j \le M_\VoxVol \bar{\VoxVol}_{M}, \\
    \label{eq:voxel_shape}
    &m_h \VoxVol_j^{1/d} \le \diam(\VoxVol_j) \le M_h
    \VoxVol_j^{1/d}, \\
    \label{eq:connectivity}
    &|\{k; \; q_{ijk} \not = 0\}| \leq M_D,
    \end{align}
    for constants $0 < m_\VoxVol \le M_\VoxVol$, $0 < m_h \le M_h$,
    $M_D$, and average voxel volume $\bar{\VoxVol}_{M} = J^{-1}\sum_{j
      = 1}^{J} V_{j}$. Hence under this parametrization we may write
    \begin{align*}
      \MeshSet = \MeshSet(m_\VoxVol,M_\VoxVol,m_h,M_h,M_D).
    \end{align*}
\end{definition}

Informally, \eqref{eq:uniform_mesh} measures how far the meshes in
$\MeshSet$ are from being uniform, \eqref{eq:voxel_shape} ensures that
no single voxel collapses into a voxel in less than $d$ dimensions,
and \eqref{eq:connectivity} that the connectivity of the mesh is
bounded. In the present paper \eqref{eq:voxel_shape} is not used
explicitly; this assumption assures a connection to the macroscopic
viewpoint in that a concentration variable may be meaningfully defined
everywhere.

\subsection{Solution regularity}
\label{subsec:RDMEstab}

We next ensure the well-posedness of \eqref{eq:RDMEPoissrepr} by
deriving some pathwise bounds on this process. To get some feeling for
what is going on we first look briefly at the corresponding
PDE-setting.

Assume for simplicity that the transport rates $q_{ijk}$ have been
chosen as a consistent discretization of the operator
$\sigma_i \Delta$ under homogeneous Neumann conditions at the mesh
$M$. Denoting a deterministic time-dependent concentration variable by
$v_{i} = v_i(t,x)$ for $x \in \Realdom^d$ and $i = 1\ldots D$, a
macroscopic reaction-diffusion PDE corresponding to
\eqref{eq:RDMEPoissrepr} reads
\begin{align}
  \label{eq:RDPDE}
  \frac{\partial v_i}{\partial t} &= \sigma_{i}\Delta v_i-
  \sum_{r = 1}^{R} \stoich_{ri} u_{r}(v_{\cdot}), 
  \mbox{ in $\Vol$, } 
  \quad \frac{\partial v_i}{\partial n} = 0, \mbox{ on $\partial \Vol$},
\end{align}
for certain nonlinear rates $u_r$, $r = 1...R$ to be prescribed below.
Equipped with suitable initial data, \eqref{eq:RDPDE} can be expected
to be a well-posed initial-boundary value problem in
$L^{\infty}([0,T]) \times L^{p}(\Vol)$ for any $p \ge 1$.

For the stochastic case \eqref{eq:RDMEPoissrepr}, and in the
non-spatial setting, an analysis in the form of assumptions and
various \textit{a priori} bounds has been developed previously
\cite{jsdestab}. We borrow many ideas from this work in what follows.

The propensities in \eqref{eq:RDMEPoissrepr} generally obey the
\emph{density dependent} scaling such that
$w_{rj}(x) = \VoxVol_{j} u_{r}(\VoxVol_{j}^{-1}x)$ for some
dimensionless function $u_{r}$ \cite[Chap.~11]{Markovappr}. We further
expect from a physically realistic model that the number of molecules
in an isolated volume $\VoxVol_j$ can somehow be bounded \textit{a
  priori}. To this end we postulate the existence of a weighted norm
\begin{align}
  \label{eq:ldef}
  \wnorm{x} := \wvect x, \quad x \in \Realdom_{+}^{D},
\end{align}
normalized such that $\min_{i} \wvec_{i} = 1$. Following
\cite{jsdestab} we formulate

\begin{assumption}[\textit{Reaction regularity}]
  \label{ass:rbound}
  For a mesh $M \in \MeshSet$ consisting of voxel volumes
  $(\VoxVol_j)_{j=1}^{J}$ we assume the density dependent scaling,
  \begin{align}
    \label{eq:ddepend}
    w_{rj}(x) &= \VoxVol_{j}
    u_{r}(\VoxVol_{j}^{-1}x), \\
    \intertext{where $u$ is \emph{independent of the mesh} and further
      satisfies,}
    \label{eq:1bnd}
    -\wvect \stoich u(x) &\le A+\alpha \wnorm{x}, \\
    \label{eq:bnd}
    (-\wvect \stoich)^{2} u(x)/2 &\le B+\beta_{1}
    \wnorm{x}+\beta_{2}\wnorm{x}^{2}, \\
    \label{eq:lip}
    |u_{r}(x)-u_{r}(y)| &\le
    L_{r}(\stopping)\|x-y\|, \mbox{ for $r = 1\ldots R$, and $\wnorm{x}
    \vee \wnorm{y} \le \stopping$}.
  \end{align}
  With the exception of $\alpha$, all parameters
  $\{A,B,\beta_{1},\beta_{2},L\}$ are assumed to be non-negative.
\end{assumption}

When considering spatially varying solutions, the natural analogue to
\eqref{eq:ldef} is
\begin{align}
  \label{eq:l1norm}
  \wunorm{X} &\equiv \sum_{j = 1}^{J} \wnorm{X_{\cdot,j}} = \wvect X \onevec,
\end{align}
for $\onevec$ an all-unit column vector of suitable height. Our
starting point is Dynkin's formula \eqref{eq:Dynkin}. We find
\begin{align}
  \nonumber
  \Expect\left[\wunorm{X(\StopT{t})}^{p}\right] &= 
  \Expect\left[\wunorm{X(0)}^{p}\right]+
  \Expect\left[ \int_{0}^{\StopT{t}} F(X(s)) \, ds \right] \\
  \intertext{where}
  \label{eq:Fdef}
  F(X) &\equiv \sum_{j = 1}^J \sum_{r = 1}^{R} w_{rj}(X_{\cdot,j}(s)) \left[ 
    \left( \wunorm{X(s)}-\wvect \stoich_{r} \right)^{p}-
    \wunorm{X(s)}^{p} \right].
\end{align}
We quote the following convenient inequality.

\begin{lemma}[\textit{Lemma~4.6 in \cite{jsdestab}}]
  \label{lem:diffbound}
  Let $f(x) \equiv (x+y)^{p}-x^{p}$ with $x \in \Realdom_{+}$ and $y
  \in \Realdom$. Then for integer $p \ge 1$ we have the bounds
  \begin{align}
    \label{eq:signed_diffbound}
    f(x) &\le p y x^{p-1}+
    2^{p-4} p(p-1) y^{2} \left[ x^{p-2}+
      |y|^{p-2} \right], \\
   \label{eq:abs_diffbound}
    |f(x)| &\le p |y| 2^{p-2} \left[ x^{p-1}+
      |y|^{p-1} \right].
  \end{align}
\end{lemma}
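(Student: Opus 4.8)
The plan is to recognize $f$ as the increment of the smooth function $g(t) := t^{p}$ across the interval with endpoints $x$ and $x+y$, and to control this increment through Taylor's theorem with a Lagrange remainder. Since $p$ is a positive integer, $g$ is a polynomial and hence infinitely differentiable on all of $\Realdom$, so Taylor's theorem applies on any real interval irrespective of the sign of $x+y$; this is exactly what allows $y$ to remain unsigned throughout.

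For the signed bound \eqref{eq:signed_diffbound} I would expand to second order. Writing $f(x) = g(x+y)-g(x)$, there is a point $\xi$ strictly between $x$ and $x+y$ with
\begin{align*}
  f(x) = g'(x)\,y + \tfrac{1}{2}g''(\xi)\,y^{2}
       = p\,x^{p-1}y + \tfrac{1}{2}p(p-1)\,\xi^{p-2}y^{2}.
\end{align*}
The leading term reproduces $pyx^{p-1}$ exactly. For the remainder I would use $\xi^{p-2}\le|\xi|^{p-2}$ together with $|\xi|\le\max(x,|x+y|)\le x+|y|$ (this is where $x\ge 0$ enters), so that, since the prefactor $p(p-1)y^{2}$ is nonnegative, the remainder is bounded by $\tfrac{1}{2}p(p-1)y^{2}(x+|y|)^{p-2}$.

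The last ingredient is the elementary convexity inequality $(a+b)^{m}\le 2^{m-1}(a^{m}+b^{m})$, valid for $a,b\ge 0$ and real $m\ge 1$ (an equality when $m=0$), which is just Jensen's inequality for the convex map $t\mapsto t^{m}$. Taking $a=x$, $b=|y|$, $m=p-2$ gives $(x+|y|)^{p-2}\le 2^{p-3}(x^{p-2}+|y|^{p-2})$, and substituting produces precisely the constant $2^{p-4}p(p-1)$ of \eqref{eq:signed_diffbound}. The absolute bound \eqref{eq:abs_diffbound} follows the same recipe but needs only the first-order mean value form $f(x)=p\,\eta^{p-1}y$ for some $\eta$ between $x$ and $x+y$; then $|f(x)|\le p|y|(x+|y|)^{p-1}\le p|y|\,2^{p-2}(x^{p-1}+|y|^{p-1})$ after the same convexity step with $m=p-1$.

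The cases $p=1$ (and $p=2$) must be checked separately, but they are in fact where the estimates are sharpest: for $p=1$ the remainder vanishes because $p(p-1)=0$ and both inequalities collapse to $|f(x)|=|y|$, while for $p=2$ the convexity step is an equality and the bound is attained. The only genuine subtlety I anticipate is the bookkeeping around the intermediate point $\xi$: because $x+y$ may be negative, $\xi^{p-2}$ can itself be negative for odd $p$, so it is essential to pass to $|\xi|^{p-2}$ \emph{before} estimating. This is harmless for the upper bound since the prefactor $p(p-1)y^{2}$ is nonnegative, but it is the one place where a careless argument could fail.
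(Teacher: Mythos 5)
Your argument is correct. Note, however, that the paper itself offers no proof to compare against: Lemma~\ref{lem:diffbound} is imported verbatim from \cite{jsdestab} (Lemma~4.6 there), so your write-up has to stand on its own merits --- and it does. The Taylor/mean-value decomposition $f(x)=px^{p-1}y+\tfrac{1}{2}p(p-1)\xi^{p-2}y^{2}$, the estimate $|\xi|\le\max(x,|x+y|)\le x+|y|$ (which is indeed the only place $x\ge 0$ is used), and the convexity step $(x+|y|)^{m}\le 2^{m-1}(x^{m}+|y|^{m})$ reproduce the constants $2^{p-4}p(p-1)$ and $p\,2^{p-2}$ of \eqref{eq:signed_diffbound} and \eqref{eq:abs_diffbound} exactly, as one can confirm for $p=2,3$ where both bounds are attained. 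You are also right to flag the sign of $\xi^{p-2}$ for odd $p$ as the one delicate point, and your handling of it (passing to $|\xi|^{p-2}$ before estimating, which is safe because the prefactor $p(p-1)y^{2}$ is nonnegative) is sound; the degenerate cases $p=1$ ($p(p-1)=0$) and $p=2$ ($m=0$) are correctly disposed of. An alternative route, common for such integer-exponent estimates, is to expand $(x+y)^{p}$ by the binomial theorem and bound the tail $\sum_{k\ge 2}\binom{p}{k}y^{k}x^{p-k}$ term by term; your Taylor form is cleaner and makes the provenance of the factor $\tfrac{1}{2}p(p-1)$ transparent.
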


Using Lemma~\ref{lem:diffbound} \eqref{eq:signed_diffbound},
Assumption~\ref{ass:rbound}~\eqref{eq:ddepend}--\eqref{eq:bnd}, and
Definition~\ref{def:mesh}~\eqref{eq:uniform_mesh} we obtain, where for
brevity $x \equiv \wunorm{X}$,
\begin{align}
  \label{eq:Fbound}
  F(X) &\le p(A\Vol+\alpha x) x^{p-1}+
  C_p(B\Vol+\beta_{1} x+
  \beta_{2}^\VoxVol x^{2})
  (x^{p-2}+C_\stoich^{p-2}),
\end{align}
where $C_p := 2^{p-3} p(p-1)$, $\beta_{2}^\VoxVol := \beta_{2}
m_\VoxVol^{-1}\bar{\VoxVol}_M^{-1}$, and $C_\stoich := \|\wvect
\stoich\|_{\infty}$. Combining \eqref{eq:Fdef} and \eqref{eq:Fbound}
and using Young's inequality several times we may obtain a bound of
the form
\begin{align}
  \label{eq:momest}
  \Expect\left[\wunorm{X(\StopT{t})}^{p}\right] &\le 
  \Expect\left[\wunorm{X(0)}^{p}\right]+\Expect\Biggl[ \int_{0}^{t}
    C(1+\wunorm{X(\StopT{s})}^{p}) \, ds \Biggr],
\end{align}
\noindent
for some $C > 0$. Using Gronwall's inequality and letting $\stopping
\to \infty$ we arrive at
\begin{theorem}
  \label{th:Epbound}
  Let $X(t)$ obey \eqref{eq:RDMEPoissrepr} under
  Assumption~\ref{ass:rbound}. Then for any integer $p \ge 1$,
  \begin{align}
    \Expect\left[\wunorm{X(t)}^{p}\right] &\le
    \left( \Expect\left[\wunorm{X(0)}^{p}\right] +1\right)\exp(Ct)-1,
  \end{align}
  where the constant $C > 0$ depends on $p$ and on the constants in
  the assumptions.
\end{theorem}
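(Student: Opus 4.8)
The groundwork is already laid in \eqref{eq:Dynkin}--\eqref{eq:momest}, so the plan is to take Dynkin's formula \eqref{eq:Dynkin} with the test function $f(X) = \wunorm{X}^{p}$, push it through to a Gronwall inequality for the stopped process, and then remove the stopping. First I would observe that the two transport sums in \eqref{eq:Dynkin} drop out entirely for this choice of $f$: moving one molecule of species $i$ from voxel $j$ to voxel $k$ sends $\wunorm{X} = \wvect X \onevec$ to itself, since the weighted mass $\wvect_i$ is lost in one voxel and gained in the other. Hence $f(X-\one_i\onet_j+\one_i\onet_k) = f(X)$, only the reaction term survives, and it is precisely the quantity $F$ displayed in \eqref{eq:Fdef}.

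Next I would bound $F$. Writing $x = \wunorm{X}$ and noting that reaction $r$ in voxel $j$ shifts the weighted norm by $y = -\wvect\stoich_r$, I apply Lemma~\ref{lem:diffbound}~\eqref{eq:signed_diffbound} termwise. The linear-in-$y$ contribution, summed against the propensities $w_{rj} = \VoxVol_j u_r(\VoxVol_j^{-1} X_{\cdot,j})$, is controlled by \eqref{eq:1bnd} once the density scaling cancels the voxel volume, yielding the $p(A\Vol+\alpha x)x^{p-1}$ term; the quadratic-in-$y$ contribution is controlled analogously by \eqref{eq:bnd}, where the leftover factor $\VoxVol_j^{-1}$ is absorbed using the lower mesh bound \eqref{eq:uniform_mesh} into $\beta_2^\VoxVol$, while $|y| \le C_\stoich$ bounds the bracket. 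Summing over $j$ and using $\sum_j \wnorm{X_{\cdot,j}}^{2} \le x^{2}$ gives \eqref{eq:Fbound}. The only genuinely critical terms on the right of \eqref{eq:Fbound} are of order $x^{p}$ (from $\alpha x \cdot x^{p-1}$ and $\beta_2^\VoxVol x^{2} \cdot x^{p-2}$); every remaining monomial has degree strictly below $p$, so repeated use of Young's inequality collapses the estimate to $F(X) \le C(1+x^{p})$, which is \eqref{eq:momest} after taking expectations and applying Tonelli.

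With $\phi(t) := \Expect[\wunorm{X(\StopT{t})}^{p}]$, the inequality \eqref{eq:momest} reads $\phi(t) \le \phi(0) + C\int_0^t (1+\phi(s))\,ds$, so setting $\psi = \phi+1$ turns this into the standard form $\psi(t) \le \psi(0) + C\int_0^t \psi(s)\,ds$. Gronwall's inequality then gives $\phi(t)+1 \le (\phi(0)+1)e^{Ct}$, and since $\phi(0) = \Expect[\wunorm{X(0)}^{p}]$ this is exactly the claimed bound, but for the stopped process and with a constant $C$ independent of $\stopping$.

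The main obstacle is the passage $\stopping \to \infty$, and this is where I would spend the care. Taking the stopping norm to be $\wunorm{\cdot}$ (harmless by equivalence of norms in finite dimension), on the event $\{\tau_\stopping \le t\}$ one has $\wunorm{X(\StopT{t})} = \wunorm{X(\tau_\stopping)} \ge \stopping$, so Chebyshev's inequality together with the uniform bound gives $\Prob[\tau_\stopping \le t] \le \stopping^{-p}(\Expect[\wunorm{X(0)}^{p}]+1)e^{Ct}$, which tends to $0$. Since $\tau_\stopping$ increases in $\stopping$, this shows the process does not explode in finite time and that $\StopT{t} \to t$ almost surely as $\stopping \to \infty$. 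Fatou's lemma applied along $\stopping \to \infty$ then transfers the uniform bound from $\wunorm{X(\StopT{t})}^{p}$ to $\wunorm{X(t)}^{p}$, which completes the proof.
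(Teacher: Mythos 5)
Your proof is correct and follows essentially the same route as the paper: Dynkin's formula with $f=\wunorm{\cdot}^{p}$ (the transport terms indeed cancel), Lemma~\ref{lem:diffbound} together with Assumption~\ref{ass:rbound} and the mesh bound to reach \eqref{eq:Fbound}--\eqref{eq:momest}, Gronwall on the shifted quantity, and finally removal of the stopping time via Fatou. The only cosmetic difference is that you deduce $\StopT{t}\to t$ almost surely via Chebyshev's inequality and the monotonicity of $\tau_{\stopping}$, whereas the paper argues by contradiction using monotone convergence; both hinge on the same uniform-in-$\stopping$ moment bound.
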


\begin{proof}
  It remains to prove that $\StopT{t} \to t$ almost surely as
  $\stopping \to \infty$. Suppose to the contrary that
  $\StopT{t} = \tau_\stopping \wedge t$ does not converge a.s.~to $t$
  as $\stopping \to \infty$. Define
  $A \equiv \{\Probelem; \; \forall P: \tau_\stopping(\Probelem) <
  t\}$.
  By the assumption $\Prob(A) > 0$ and, for any $\Probelem \in A$, and
  for all $\stopping > 0$,
  \begin{align*}
    \sup_{0 \leq s \leq t} \|X_s(\Probelem)\| >
    \stopping \mbox{, or simply, }
    \sup_{0 \leq s \leq t} \|X_s\|(\Probelem) = \infty.
  \end{align*}
  In other words, $X(\StopT{t},\omega) \to \infty$ for every $\omega
  \in A$, and $\|X(\StopT{t},\omega)\|$ forms an increasing sequence
  with respect to $\stopping$. Using the Lebesgue monotone
  convergence theorem together with $\Prob(A) > 0$, we get that
  $E[\|X(\StopT{t})\|] \geq E[\|X(\StopT{t})\| 1_{\omega \in A}] \to
  \infty$. However, $E[\|X(\StopT{t})\|]$ is bounded from above
  independently of $\stopping$ and thus we have a contradiction.
\end{proof}

Notably, when small voxels $\VoxVol_j$ are present and quadratic
reactions which are not $\wvec$-neutral are allowed (i.e.~$\beta_2
\not = 0$), then an investigation of $C$ in \eqref{eq:momest} reveals
that the second order moment and higher may grow fast as $\exp(\beta_2
\VoxVol_j^{-1}t)$.

To achieve pathwise convergence results we will need a stronger
regularity guarantee which requires control of the martingale part via
Burkholder's inequality. To this end we define the quadratic variation
of a real-valued process $(Y_{t})_{t \ge 0}$ by
\begin{align}
  \label{eq:QVdef}
  [Y]_{t} &= \lim_{\|\mathcal{P}\| \to 0} \sum_{k = 0}^{n-1}
  \left( Y_{t_{k+1}}-Y_{t_{k}} \right)^{2},
\end{align}
where the partition $\mathcal{P} = \{0 = t_{0} < t_{1} < \cdots <
t_{n} = t\}$ for which $\|\mathcal{P}\| := \max_{k} |t_{k+1}-t_{k}|$
and where the limit is in probability.

\begin{lemma}
  \label{lem:quadratic_variation}
  Let $X(t)$ satisfy \eqref{eq:RDMEPoissrepr} under
  Assumption~\ref{ass:rbound}. Then the quadratic variation of
  $\wunorm{X(t)}^{p}$ is bounded by
  \begin{align}
    \label{eq:quadratic_variation}
    \Expect\left([\wunorm{X}^{p}]_{t}^{1/2}\right) &\le \Expect\left[ \int_{0}^{t}
    C(1+\wunorm{X(s)}^{p}+ \beta_{2}^{\VoxVol}\wunorm{X(s)}^{p+1}) \, ds \right],
  \end{align}
  where $C > 0$ again depends on $p$ and on the constants in
  Assumption~\ref{ass:rbound}, but not on the mesh resolution, and
  where
  $\beta_{2}^{\VoxVol} := \beta_2 m_\VoxVol^{-1}\bar{\VoxVol}_M^{-1}$.
\end{lemma}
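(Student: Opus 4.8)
The plan is to exploit that $\wunorm{X(t)}^{p}$ is a pure jump functional and to bound its quadratic variation directly by the total variation of its jumps, thereby avoiding any martingale estimate. First I would observe that the transport part of \eqref{eq:RDMEPoissrepr} is conservative for $\wunorm{\cdot}$: a transport jump sends $X \mapsto X - \one_i\onet_j + \one_i\onet_k$, which leaves $\wunorm{X} = \wvect X\onevec$ unchanged. Hence only the reaction channels $\Pi_{rj}$ produce jumps in $Y_t := \wunorm{X(t)}^{p}$, each of size $(\wunorm{X(s-)} - \wvect\stoich_r)^{p} - \wunorm{X(s-)}^{p}$. From the definition \eqref{eq:QVdef}, $[Y]_t = \sum_{s \le t}(\Delta Y_s)^2$, and since $\|\cdot\|_{\ell^2} \le \|\cdot\|_{\ell^1}$ for the (a.s.\ finite) family of jumps, $[Y]_t^{1/2} \le \sum_{s\le t}|\Delta Y_s|$. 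Writing this out over the reaction Poisson processes and taking expectations, the compensation formula (with compensator $w_{rj}(X_{\cdot,j}(s))\,ds$ for $\Pi_{rj}$; legitimate since the integrand is nonnegative and predictable) yields
\[
\Expect\left([\wunorm{X}^{p}]_t^{1/2}\right) \le \Expect\left[\int_0^t \sum_{j=1}^{J}\sum_{r=1}^{R} w_{rj}(X_{\cdot,j}(s))\,\left|(\wunorm{X(s)}-\wvect\stoich_r)^{p} - \wunorm{X(s)}^{p}\right|\,ds\right].
\]
If needed for rigor I would first run this with the stopped process $X(\StopT{t})$, on which only finitely many jumps occur, and then let $\stopping\to\infty$ by monotone convergence, exactly as in the proof of Theorem~\ref{th:Epbound}.

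Next I would estimate the integrand pointwise. Applying Lemma~\ref{lem:diffbound}~\eqref{eq:abs_diffbound} with $x=\wunorm{X}$ and $y=-\wvect\stoich_r$ bounds each jump by $p2^{p-2}|\wvect\stoich_r|\,[\wunorm{X}^{p-1}+|\wvect\stoich_r|^{p-1}]$, i.e.\ by $p2^{p-2}\big(|\wvect\stoich_r|\,\wunorm{X}^{p-1} + |\wvect\stoich_r|^{p}\big)$. The crux is then to convert the absolute values $|\wvect\stoich_r|$ and $|\wvect\stoich_r|^{p}$ --- against which Assumption~\ref{ass:rbound} offers no direct control --- into the squared quantity $(\wvect\stoich_r)^2$ appearing in \eqref{eq:bnd}. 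This is possible because $\wvec$-neutral reactions ($\wvect\stoich_r = 0$) contribute nothing to either factor, so that over the finite reaction set there is a constant $C'$ with $|\wvect\stoich_r| + |\wvect\stoich_r|^{p} \le C'(\wvect\stoich_r)^2$ for every $r$.

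Inserting the density-dependent scaling \eqref{eq:ddepend}, summing $\sum_r u_r(\xi_j)(\wvect\stoich_r)^2$ via \eqref{eq:bnd} at the voxel concentration $\xi_j := \VoxVol_j^{-1}X_{\cdot,j}$, and using the mesh lower bound \eqref{eq:uniform_mesh} (so $\VoxVol_j^{-1}\le m_{\VoxVol}^{-1}\bar{\VoxVol}_M^{-1}$) together with $\sum_j \wnorm{X_{\cdot,j}}^2 \le \wunorm{X}^2$ yields $\sum_{r,j}w_{rj}(X_{\cdot,j})(\wvect\stoich_r)^2 \le 2(B\Vol + \beta_1\wunorm{X} + \beta_{2}^{\VoxVol}\wunorm{X}^2)$, precisely as in the derivation of \eqref{eq:Fbound}. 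Multiplying this voxel bound by $\wunorm{X}^{p-1}$ for the first contribution and leaving it as is for the second then produces terms $1,\ \wunorm{X},\ \wunorm{X}^{p-1},\ \wunorm{X}^p,\ \beta_{2}^{\VoxVol}\wunorm{X}^2$ and $\beta_{2}^{\VoxVol}\wunorm{X}^{p+1}$.

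Finally I would consolidate these into the claimed form $C(1+\wunorm{X}^p + \beta_{2}^{\VoxVol}\wunorm{X}^{p+1})$, where the essential point is integrality: since $\wvec_i \ge 1$ and $X$ is integer-valued, $\wunorm{X}\in\{0\}\cup[1,\infty)$, so there is no regime $0<\wunorm{X}<1$. Consequently $\wunorm{X}^{p-1}\le 1+\wunorm{X}^p$, $\wunorm{X}\le 1+\wunorm{X}^p$, and, crucially, $\beta_{2}^{\VoxVol}\wunorm{X}^2 \le \beta_{2}^{\VoxVol}\wunorm{X}^{p+1}$, so that the mesh-dependent factor $\beta_{2}^{\VoxVol}$ never gets stranded on a standalone constant and the remaining $C$ stays independent of the mesh resolution. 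The main obstacle, to my mind, is exactly this bookkeeping around the absolute values and the mesh factor: recognizing that \eqref{eq:bnd} can only be reached through $(\wvect\stoich_r)^2$ --- which forces the neutral-reaction observation --- and that the resulting $\beta_{2}^{\VoxVol}\wunorm{X}^2$ must be absorbed into $\beta_{2}^{\VoxVol}\wunorm{X}^{p+1}$ via integrality rather than into $C$.
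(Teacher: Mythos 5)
Your proposal is correct and follows essentially the same route as the paper's proof: bound $[\wunorm{X}^{p}]_t^{1/2}$ by the $\ell^1$-sum of jump magnitudes via $\|\cdot\|_2\le\|\cdot\|_1$, rewrite as a (compensated) integral against the reaction counting processes, apply Lemma~\ref{lem:diffbound}~\eqref{eq:abs_diffbound} and Assumption~\ref{ass:rbound}~\eqref{eq:ddepend},\eqref{eq:bnd}, and remove the stopping time via Theorem~\ref{th:Epbound}. You in fact make explicit two points the paper leaves implicit --- that transport jumps are $\wvec$-neutral and hence drop out, and that integrality of the state is what keeps the $\beta_2^{\VoxVol}$ factor off the standalone constant --- which is a welcome sharpening rather than a deviation.
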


\begin{proof}
  Let $t_0 = 0$ and $t_i$ for $i = 1, 2, ...$ be the successive jump
  times of $X$. Then
  \begin{align*}
    [\wunorm{X}^{p}]_{\StopT{t}}^{1/2} &= \left( \sum_{0 < t_i \leq \StopT{t}} 
    \left( \wunorm{X (t_i)}^{p} - \wunorm{X(t_{i-1})}^{p} \right)^2 \right)^{1/2}.
  \end{align*}
  Under the stopping time $X$ is non-explosive with probability 1 and
  the number of jumps is finite in $[0,\StopT{t}]$. Thus we can use
  the inequality $\|\cdot\|_2 \leq \|\cdot\|_1$ to get
  \begin{align*}
    [\wunorm{X}^{p}]_{\StopT{t}}^{1/2} &\leq \sum_{0 < t_i \leq \StopT{t}} \left| \wunorm{X (t_i)}^{p} - \wunorm{X(t_{i-1})}^{p} \right|.
  \end{align*}
  The right-hand side can be written as a Lebesgue-Steiltjes integral,
  \begin{align*}
    \int_{0}^{\StopT{t}}
    \sum_{j = 1}^J \sum_{r = 1}^{R} \left| 
    \left( \wunorm{X(s)}-\wvect \stoich_{r} \right)^{p}-
    \wunorm{X(s)}^{p} \right| \, dY_{rj}(s),
  \end{align*}
  with $Y_{rj}$ the counting process
  $Y_{rj}(t) = \Pi_{rj}\left( \int_0^t w_{rj}(X_{.,j}(s)) \, ds
  \right)$. Taking the expectation yields
  {\small \begin{align*}
    \Expect \left( [\wunorm{X}^{p}]_{\StopT{t}}^{1/2} \right) &\leq 
    \Expect \left[ \int_{0}^{\StopT{t}}
    \sum_{j = 1}^J \sum_{r = 1}^{R} w_{rj}(X_{.,j}(s)) \left| 
    \left( \wunorm{X(s)}-\wvect \stoich_{r} \right)^{p}-
    \wunorm{X(s)}^{p}
    \right|\, ds\right].
  \end{align*}}
  Using Lemma~\ref{lem:diffbound}~\eqref{eq:abs_diffbound} and
  Assumption~\ref{ass:rbound}~\eqref{eq:ddepend} and \eqref{eq:bnd},
  \begin{align*}
    &\le \Expect \left[ \int_{0}^{\StopT{t}} 
      \sum_{j,r} p |\wvect \stoich_{r}| w_{rj}(X_{\cdot,j}(s)) \; 2^{p-2}
      \left[ \wunorm{X(s)}^{p-1}+|\wvect \stoich_{r}|^{p-1} \right]
      \, ds \right] \\
    &\le \Expect \left[ \int_{0}^{\StopT{t}} 
      C_p(B\Vol+\beta_{1}\wunorm{X(s)}+\beta_{2}^\VoxVol\wunorm{X(s)}^{2})
      (\wunorm{X(s)}^{p-1}+C_\stoich^{p-1}) \, ds \right].
  \end{align*}
  Relying on the moment bound in Theorem~\ref{th:Epbound} we let
  $\stopping \to \infty$ to arrive at the stated bound.
\end{proof}

We consider the following strong sense of pathwise locally bounded
processes:
\begin{align}
  \Sspace{p} &= \left\{ X(t,\Probelem): \begin{array}{l}
    X(t) \in \Intdom_{+}^{D \times J} \mbox{ is
      $\Probfiltr_{t}$-adapted such that } \\
    \Expect[\sup_{t \in [0,T]} \wunorm{X_{t}}^{p}] <
    \infty \mbox{ for } \forall T < \infty \end{array} \right\}.
\end{align}

\begin{theorem}[\textit{Regularity}]
  \label{th:exist0}
  Let $X(t)$ be a solution to \eqref{eq:RDMEPoissrepr} under
  Assumption~\ref{ass:rbound} with $\beta_{2} = 0$. Then if
  $\Expect[\wunorm{X(0)}^{p}] < \infty$, $\{X(t)\}_{t \ge 0}
  \in \Sspace{p}$. If $\beta_{2} > 0$ then the conclusion remains
  under the additional requirement that $\Expect[\wunorm{X(0)}^{p+1}]
  < \infty$.
\end{theorem}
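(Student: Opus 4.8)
The plan is to upgrade the pointwise moment bound of Theorem~\ref{th:Epbound} to a bound on the \emph{running supremum} by controlling the martingale part of $\wunorm{X(t)}^{p}$ with Burkholder's inequality, which is precisely the purpose for which Lemma~\ref{lem:quadratic_variation} was prepared. First I would record the semimartingale decomposition of $\wunorm{X}^{p}$ read off from \eqref{eq:RDMEPoissrepr}: since the transport terms merely move molecules between voxels and hence leave $\wunorm{X} = \wvect X \onevec$ invariant, only the reaction channels contribute, and compensating the associated counting processes gives, under the stopping time $\tau_{\stopping}$,
\begin{align*}
  \wunorm{X(\StopT{t})}^{p} &= \wunorm{X(0)}^{p} + \int_{0}^{\StopT{t}} F(X(s))\, ds + M_{\StopT{t}},
\end{align*}
where $F$ is the compensator integrand of \eqref{eq:Fdef} and $M$ is a genuine martingale once stopped, the process being non-explosive on $[0,\StopT{t}]$.

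Next I would take the supremum over $s \le \StopT{t}$ and then expectations, bounding the finite-variation part by its positive part and the martingale part by Burkholder's inequality, to obtain
\begin{align*}
  \Expect\Bigl[\sup_{s \le \StopT{t}} \wunorm{X(s)}^{p}\Bigr]
  &\le \Expect\bigl[\wunorm{X(0)}^{p}\bigr]
  + \Expect\Bigl[\int_{0}^{\StopT{t}} [F(X(s))]^{+}\, ds\Bigr]
  + C\,\Expect\bigl[[\wunorm{X}^{p}]_{\StopT{t}}^{1/2}\bigr].
\end{align*}
The drift term is then controlled by the upper bound \eqref{eq:Fbound}, a polynomial in $x = \wunorm{X}$ of degree $p$, while the quadratic-variation term is controlled directly by Lemma~\ref{lem:quadratic_variation}, whose bound is a polynomial of degree $p+1$ carrying the factor $\beta_{2}^{\VoxVol}$.

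It remains to see that both time integrals are finite and uniformly bounded in $\stopping$, which is where the moment hypotheses enter. By Theorem~\ref{th:Epbound} applied with exponent $p$ (respectively $p+1$) the two integrands have expectations bounded on $[0,T]$ provided $\Expect[\wunorm{X(0)}^{p}] < \infty$ (respectively $\Expect[\wunorm{X(0)}^{p+1}] < \infty$). When $\beta_{2} = 0$ the degree-$(p+1)$ contribution in Lemma~\ref{lem:quadratic_variation} vanishes, so the $p$-th moment already suffices; when $\beta_{2} > 0$ the $\beta_{2}^{\VoxVol}\wunorm{X}^{p+1}$ term is genuinely present and forces the extra moment assumption. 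This asymmetry between the degree-$p$ drift and the degree-$(p+1)$ quadratic variation is the structural reason for the two cases in the statement. Finally I would let $\stopping \to \infty$: since $\StopT{t} \uparrow t$ almost surely as in the proof of Theorem~\ref{th:Epbound}, and $\sup_{s \le \StopT{t}} \wunorm{X(s)}^{p}$ increases to $\sup_{s \le t}\wunorm{X(s)}^{p}$, monotone convergence together with the $\stopping$-uniform bound yields $\Expect[\sup_{s \le T}\wunorm{X(s)}^{p}] < \infty$ for every $T < \infty$, i.e.\ $X \in \Sspace{p}$.

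The main obstacle I anticipate is the clean passage through the localization: one must apply Burkholder's inequality to the genuinely integrable \emph{stopped} martingale and verify that the resulting bound is uniform in $\stopping$ before taking the monotone limit, rather than applying it to the merely local martingale directly. Everything else is an assembly of the already-established estimates \eqref{eq:Fbound}, Theorem~\ref{th:Epbound}, and Lemma~\ref{lem:quadratic_variation}.
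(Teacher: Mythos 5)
Your proposal is correct and follows essentially the same route as the paper: the same semimartingale decomposition of $\wunorm{X(\StopT{t})}^{p}$, control of the martingale part through Lemma~\ref{lem:quadratic_variation} and Burkholder's inequality, and the same use of the $p$-th versus $(p+1)$-st moment hypothesis to separate the two cases, finishing by letting $\stopping \to \infty$. The only cosmetic difference is that in the $\beta_{2}=0$ case the paper closes the estimate with a Gronwall argument on the running supremum, whereas you bound the drift and quadratic-variation integrands directly via Theorem~\ref{th:Epbound}; this is equally valid and is in fact exactly how the paper handles the $\beta_{2}>0$ case.
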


\begin{proof}
  This result follows as a combination of Theorem~\ref{th:Epbound} and
  Lemma~\ref{lem:quadratic_variation}. We find that
  \begin{align*}
    \wunorm{X(\StopT{t})}^{p} &= \wunorm{X(0)}^{p}+\int_{0}^{\StopT{t}} 
    F(X(s)) \, ds+M_{\StopT{t}},
  \end{align*}
  with $F$ defined in \eqref{eq:Fdef}. The quadratic variation of the
  local martingale $M_{\StopT{t}}$ can be estimated via
  Lemma~\ref{lem:quadratic_variation},
  \begin{align}
    \label{eq:Mqv}
    \Expect \left( [M]_{\StopT{t}}^{1/2} \right) &\le 
    \Expect \left[ \int_{0}^{\StopT{t}} C(1+\wunorm{X(s)}^{p}+
    \beta_{2}^{\VoxVol}\wunorm{X(s)}^{p+1}) \, ds \right].
  \end{align}
  Assume first that $\beta_{2} = 0$. Using the previously developed
  bound in \eqref{eq:Fbound} and \eqref{eq:momest} for the drift part
  we get
  \begin{align*}
    \wunorm{X(\StopT{t})}^{p} &\le \wunorm{X(0)}^{p}+\int_{0}^{\StopT{t}} 
    C(1+\wunorm{X(s)}^{p}) \, ds+|M_{\StopT{t}}|. \\
    \intertext{Combining with \eqref{eq:Mqv} we find after using 
      Burkholder's inequality \cite[Chap.~IV.4]{protterSDE},}
    \Expect\left[ \sup_{s \in [0,\StopT{t}]} \wunorm{X(s)}^{p}\right] &\le
    \Expect[\wunorm{X(0)}^{p}]+\int_{0}^{\StopT{t}}
    C\left(1+\Expect\left[\sup_{s' \in [0,s]}
      \wunorm{X(s')}^{p}\right] \right)\, ds. \\
    \intertext{For clarity, writing $\wunorm{X}^{p}(t) := \sup_{s \in [0,t]}
    \wunorm{X(s)}^{p}$ we find that}
    \Expect[\wunorm{X}^{p}(\StopT{t})] &\le 
    \Expect[\wunorm{X(0)}^{p}]+\int_{0}^{t}
    C(1+E[\wunorm{X}^{p}(\StopT{s})] \, ds.
  \end{align*}
  Gronwall's inequality now implies that
  $\Expect[\wunorm{X}^{p}(\StopT{t})]$ is bounded in terms of the
  initial data and time $t$. By Fatou's lemma the claim follows by
  letting $\stopping \to \infty$.

  We next consider $\beta_{2} > 0$. Using Theorem~\ref{th:Epbound} we
  still have the bound \eqref{eq:Mqv} which yields
  \begin{align*}
    \Expect \left( [M]_{\StopT{t}}^{1/2} \right) &\le
    \int_{0}^{\StopT{t}} 
    C(1+\Expect[\wunorm{X(s)}^{p+1}]) \, ds 
    \le (e^{C\StopT{t}}-1)(\Expect[\wunorm{X(0)}^{p+1}]+1).
  \end{align*}
  where we similarly obtain a bound in terms of
  $\Expect[\wunorm{X(0)}^{p+1}]$.
\end{proof}

\subsection{Scaling}
\label{subsec:scaling}

We shall now regard the transport rates, the reaction rates, and the
magnitude of the state variables as problem parameters which may
induce a scale separation. Although a completely general multiscale
analysis is possible within the current framework, to fix our ideas
and in the interest of a transparent presentation, we consider a
concrete, but still quite general two-scale separation.

\begin{condition}[\textit{Scale separation}]
  \label{cond:scales}
  Let a \emph{scale vector} $\ScaleS \in \Realdom^{D}$ be given. The
  transport- and reaction rates are assumed to obey the scaling laws
  \begin{align}
    \label{eq:scaled}
    q_{ijk} x &= \epsilon^{-\mu_i} \bar{q}_{ijk} \ScaleS^{-1}x, \\
    \label{eq:scaler}
    u_r(x) &= \epsilon^{-\nu_r^{(1)}} \bar{u}_r(x) = 
    \epsilon^{-\nu_r^{(1)}-\nu_r^{(2)}} \bar{u}_r(\ScaleS^{-1}x) \\
    \intertext{for $i = 1 \ldots D$, $(j,k) = 1\ldots J$, and $r = 1
      \ldots R$. For the state variables we define}
    \label{eq:scalec}
    X_{i,\cdot}(t) &= \ScaleS_{i} \ScaleX_{i,\cdot}(t), \qquad
    \ScaleS_{i} = 1 \mbox{ or } \epsilon^{-1}.
  \end{align}
  where $\nu_r^{(1)}$ is the scaling of the rate (fast/slow) while
  $\nu_r^{(2)}$ follows from the number of species involved in
  transition $r$ such that $S_i = \epsilon^{-1}$. Let the complete
  scaling be
  $$\nu_r = \nu_r^{(1)} + \nu_r^{(2)}.$$
  The dynamics is considered for $t \in [0,T]$, $T = O(1)$ with
  respect to $\epsilon$. Also, all non-dimensionalized constants and
  propensities $\{\bar{q}_{ijk},\bar{u}_r(\cdot)\}$ are understood to
  be $O(1)$ with respect to $\epsilon$.
\end{condition}

It is possible to analyze also the general case where the species
scale differently in different voxels, i.e.~$X_{ij} = \ScaleS_{ij}
\ScaleX_{ij}$. However, this analysis is complicated by the fact that
the results then take place in a transient regime, and, in turn, this
regime is difficult to generally estimate.

We make a slight abuse of notation by employing $\ScaleS$ as if it was
the $D$-by-$D$ matrix $\diag(\ScaleS)$. Using a similar convention for
$\nu$ we may write \eqref{eq:scaler} in the compact form
\begin{align}
  \label{eq:scaler_compact}
  u(x) &= \epsilon^{-\nu} \bar{u}(\ScaleS^{-1}x).
\end{align}
To take a concrete example: the bimolecular reaction
$X+Y \to \emptyset$ at rate $kXY$ obeys \eqref{eq:scaler} with
$\nu_r = 0$ for $k \sim \epsilon$ and one of the species scaling
macroscopically as $\epsilon^{-1}$. If both species are macroscopic,
then instead $\nu_r = 1$ at the same scaling of the rate
$k \sim \epsilon$.

Following Condition~\ref{cond:scales} we thus divide the species into
two disjoint groups, $\Group_1$ and $\Group_2$, with
$|\Group_1|+|\Group_2| = D_1+D_2 = D$. Informally, we suppose that
species in low copy numbers are in $\Group_1$ and species in large
copy numbers are in $\Group_2$. Under an appropriate enumeration of
the species this implies the choice of scaling $\ScaleS_{i} = 1$ for
$i \in \Group_1 = \{1\ldots D_1\}$ and $= \epsilon^{-1}$ for $i \in
\Group_2 = \{D_1+1\ldots D\}$ in \eqref{eq:scalec}. Following this
ordering we also write $\wvec = [\wvec_1; \, \wvec_2]$ and $\stoich =
[\stoich^{(1)};\,\stoich^{(2)}]$, where $\wvec_i \in \Realdom_{\ge
  1}^{D_i}$ and $\stoich^{(i)} \in \Realdom^{D_i \times R}$ for $i \in
\{1,2\}$.

We find from \eqref{eq:RDMEPoissrepr} the governing equation
\begin{align}
  \label{eq:RDMEPoissreprS}
  \ScaleX_{ij}(t) = \ScaleX_{ij}(0) &-
  \sum_{r = 1}^{R} \ScaleS_{i}^{-1} \stoich_{ri} \Pi_{rj}  
  \left( \int_{0}^{t} 
  \VoxVol_j \epsilon^{-\nu_{r}} 
  \bar{u}_r (\VoxVol_j^{-1} \ScaleX_{\cdot,j}(s)) \, ds \right) \\
  \nonumber
  &-\sum_{k = 1}^{J} \ScaleS_{i}^{-1} \Pi_{ijk}' 
  \left( \int_{0}^{t}
  \epsilon^{-\mu_i} \bar{q}_{ijk} \ScaleX_{ij}(s)  \, ds \right) \\
  \nonumber
  &+\sum_{k = 1}^{J} \ScaleS_{i}^{-1} \Pi_{ikj}'
  \left( \int_{0}^{t}
  \epsilon^{-\mu_i} \bar{q}_{ikj} \ScaleX_{ik}(s) \, ds \right).
\end{align}

For the existence of scale separation it is critical to find
conditions such that according to some weight-vector $\lvec$,
$\lunorm{\ScaleX(t)}$ for $t \in [0,T]$ remains $O(1)$ whenever
$\lunorm{\ScaleX(0)}$ is $O(1)$, assuming that $T$ and $\lvec$ both
are $O(1)$ with respect to $\epsilon$. Unfortunately, the assumptions
and analysis in \S\ref{subsec:RDMEstab} all concerned the unscaled
variable $X(t)$, which is now assumed to be $O(\epsilon^{-1})$. In
fact, it is not difficult to see that with, say,
$\bar{\lvec} := \ScaleS\wvec$ replacing $\wvec$ throughout
Assumption~\ref{ass:rbound}, and requiring that all constants be
independent of $\epsilon$, the results in \S\ref{subsec:RDMEstab} are
straightforwardly translated into bounds in terms of the
$\bar{\lvec}$-norm of $\ScaleX(t)$. Since this is just the
$\wvec$-norm of $X(t)$ itself, however, it scales as
$O(\epsilon^{-1})$. What is additionally required is that the
weight-vector $\lvec$ can be selected \emph{independently} of
$\epsilon$.

\begin{assumption}[\textit{Reaction regularity, scaled case}]
  \label{ass:rboundS}
  The previous assumption of density dependent propensities
  \eqref{eq:ddepend} is assumed to hold. We further assume the
  existence of a vector $\lvec \in \Realdom^D_{\ge 1}$,
  \emph{independent of $\epsilon$}, such that
  \begin{align}
    \label{eq:1bndS}
    -\lvect \ScaleS^{-1} \stoich &u(x) \le A+\alpha \lnorm{\ScaleS^{-1} x}, \\
    \label{eq:bndS}
    (-\lvect \ScaleS^{-1} \stoich)^{2} &u(x)/2 \le B+\beta_{1}
    \lnorm{\ScaleS^{-1} x}+\beta_{2}\lnorm{\ScaleS^{-1} x}^{2}, \\
    \label{eq:lipS}
    |\bar{u}_{r}(x)-\bar{u}_{r}(y)| &\le
    \LipPr_{r}(\Sstopping)\|x-y\|, \mbox{ for $r = 1\ldots R$, and $\lnorm{x}
    \vee \lnorm{y} \le \Sstopping$}.
  \end{align}
  All parameters $\{A,\alpha,B,\beta_{1},\beta_{2},L\}$ are assumed to
  be independent of $\epsilon$ and non-negative (with negative values
  allowed for $\alpha$).
\end{assumption}

Equipped with this assumption we revisit the regularity results of
\S\ref{subsec:RDMEstab}. To this end we consider a version of
$\Sspace{p}$ scaled with $\ScaleS$,
\begin{align}
  \SspaceS{p} &\equiv \left\{ \ScaleX(t,\Probelem): \begin{array}{l}
    \ScaleX(t) \in S^{-1} \Intdom_{+}^{D \times J}
    \mbox{ is $\Probfiltr_{t}$-adapted such that } \\
    \Expect[\sup_{t \in [0,T]} \lunorm{\ScaleX_{t}}^{p}] <
    \infty \mbox{ for } \forall T < \infty \end{array} \right\}, \\
  \intertext{where the scaled state space is just}
  &S^{-1} \Intdom_{+}^{D \times J} \equiv 
  [\Intdom_{+}^{D_1 \times J};\; \epsilon \Intdom_{+}^{D_2 \times J}],
\end{align}
and $\epsilon \Intdom_+ = \{0,\epsilon,2\epsilon,\ldots\}$.

\begin{theorem}[\textit{Regularity, scaled case}]
  \label{th:exist1}
  Under Condition~\ref{cond:scales}, Theorem~\ref{th:Epbound} and
  \ref{th:exist0} both hold with the new Assumption~\ref{ass:rboundS}
  replacing the previous Assumption~\ref{ass:rbound} and with
  $\SspaceS{p}$ replacing $\Sspace{p}$. In particular:
  \begin{enumerate}
  \item The constant $C$ in Theorem~\ref{th:Epbound} can be selected
    independently of $\epsilon$.

  \item If either $\beta_2 = 0$ and $\Expect[\lunorm{\ScaleX(0)}^{p}]$
    is $O(1)$ with respect to $\epsilon$, or $\beta_2 > 0$ and
    $\Expect[\lunorm{\ScaleX(0)}^{p+1}]$ is $O(1)$, then so is
    $\Expect[\sup_{s \in [0,t]} \lunorm{\ScaleX(s)}^{p}]$ for $t \in
    [0,T]$, $T = O(1)$ with respect to $\epsilon$.
  \end{enumerate}
\end{theorem}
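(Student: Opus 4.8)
The plan is to rerun the chain of estimates that led from Dynkin's formula \eqref{eq:Dynkin} to Theorems~\ref{th:Epbound} and \ref{th:exist0}, but now applied to the scaled process $\ScaleX$ of \eqref{eq:RDMEPoissreprS} and measured in the $\lvec$-norm rather than the $\wvec$-norm, keeping careful track of the fact that every constant which emerges can be chosen free of $\epsilon$. First I would observe that a reaction $r$ firing in voxel $j$ changes $\lunorm{\ScaleX}$ by exactly $-\lvect \ScaleS^{-1}\stoich_r$, while the transport terms in \eqref{eq:RDMEPoissreprS} leave $\lunorm{\ScaleX}$ invariant, since they move a fixed amount $\ScaleS_i^{-1}$ of species $i$ between voxels at equal weight $\lvec_i$. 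Hence Dynkin's formula reproduces the structure of \eqref{eq:Fdef}, with drift $F(\ScaleX) = \sum_{j,r} w_{rj}^{\mathrm{scaled}}(\ScaleX_{\cdot,j})[(\lunorm{\ScaleX} - \lvect \ScaleS^{-1}\stoich_r)^p - \lunorm{\ScaleX}^p]$, where $w_{rj}^{\mathrm{scaled}}(\ScaleX_{\cdot,j}) = \VoxVol_j \epsilon^{-\nu_r}\bar u_r(\VoxVol_j^{-1}\ScaleX_{\cdot,j})$ is the scaled propensity read off from \eqref{eq:RDMEPoissreprS}.

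The heart of the argument is the bookkeeping of the $\epsilon$-powers. The scaling law \eqref{eq:scaler} gives the identity $\epsilon^{-\nu_r}\bar u_r(\VoxVol_j^{-1}\ScaleX_{\cdot,j}) = u_r(\VoxVol_j^{-1}\ScaleS \ScaleX_{\cdot,j})$, so that summing the scaled propensities against $-\lvect \ScaleS^{-1}\stoich_r$ and against $(\lvect \ScaleS^{-1}\stoich_r)^2/2$ produces precisely the left-hand sides of \eqref{eq:1bndS} and \eqref{eq:bndS}, evaluated at $x = \VoxVol_j^{-1}\ScaleS \ScaleX_{\cdot,j}$, namely $\VoxVol_j(-\lvect \ScaleS^{-1}\stoich\, u)(\cdot)$ and $\VoxVol_j (\lvect \ScaleS^{-1}\stoich)^2 u(\cdot)/2$. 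Applying \eqref{eq:1bndS} and \eqref{eq:bndS} at this $x$, where $\lnorm{\ScaleS^{-1}x} = \VoxVol_j^{-1}\lnorm{\ScaleX_{\cdot,j}}$, and summing over $j$ with the mesh-uniformity bound \eqref{eq:uniform_mesh}, then reproduces \eqref{eq:Fbound} line for line with $x = \lunorm{\ScaleX}$, the same $\beta_2^\VoxVol = \beta_2 m_\VoxVol^{-1}\bar{\VoxVol}_M^{-1}$, and with $C_\stoich$ replaced by $\|\lvect \ScaleS^{-1}\stoich\|_\infty$. Since $\lvec$ is $\epsilon$-independent by Assumption~\ref{ass:rboundS} and $\ScaleS^{-1}$ has entries in $\{1,\epsilon\} \subset [0,1]$ for $\epsilon \le 1$, this last constant --- and hence, after Young's inequality, the constant $C$ of \eqref{eq:momest} --- is bounded independently of $\epsilon$ (it may still depend on the fixed mesh through $\beta_2^\VoxVol$). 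Gronwall's inequality together with the stopping-time limit $\StopT{t} \to t$, argued exactly as in Theorem~\ref{th:Epbound} but now with $\tau_{\Sstopping} = \inf\{\lunorm{\ScaleX(t)} > \Sstopping\}$, then delivers the scaled moment bound with $\epsilon$-free $C$, which is item (1).

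For the pathwise statement and membership in $\SspaceS{p}$, I would repeat Lemma~\ref{lem:quadratic_variation} and Theorem~\ref{th:exist0} with the same substitutions: the absolute bound \eqref{eq:abs_diffbound} together with \eqref{eq:ddepend} and \eqref{eq:bndS} gives the quadratic-variation estimate \eqref{eq:quadratic_variation} verbatim, again with $\epsilon$-free constants, and Burkholder's inequality followed by Gronwall controls $\Expect[\sup_{s \in [0,t]}\lunorm{\ScaleX(s)}^p]$. Tracking the two cases $\beta_2 = 0$ and $\beta_2 > 0$ as in Theorem~\ref{th:exist0}, the resulting bound is expressed purely through the $\epsilon$-free constants, $T$, and $\Expect[\lunorm{\ScaleX(0)}^p]$ (respectively $\Expect[\lunorm{\ScaleX(0)}^{p+1}]$); since these are $O(1)$ by hypothesis and $T = O(1)$, so is the supremum moment, which is item (2). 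The only genuinely delicate step is the $\epsilon$-power accounting of the previous paragraph: one must confirm that the fast/slow rate scaling $\epsilon^{-\nu_r^{(1)}}$ and the state-magnitude scaling $\epsilon^{-\nu_r^{(2)}}$ combine, through \eqref{eq:scaler} and the $\epsilon$-independent choice of $\lvec$, to leave no residual power of $\epsilon$ in any of the constants; everything else is a transcription of \S\ref{subsec:RDMEstab} with $\lvec$ and $\ScaleS^{-1}\stoich$ in place of $\wvec$ and $\stoich$.
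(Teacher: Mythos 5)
Your proposal is correct and follows exactly the route the paper intends: the paper omits the proof of Theorem~\ref{th:exist1} precisely because it is the transcription of \S\ref{subsec:RDMEstab} you describe, with $\lvec$ and $\ScaleS^{-1}\stoich$ replacing $\wvec$ and $\stoich$, the identity $\epsilon^{-\nu_r}\bar u_r(\VoxVol_j^{-1}\ScaleX_{\cdot,j}) = u_r(\VoxVol_j^{-1}\ScaleS\ScaleX_{\cdot,j})$ converting the scaled propensities back into the form where \eqref{eq:1bndS}--\eqref{eq:bndS} apply, and the $\epsilon$-independence of $\lvec$ guaranteeing $\epsilon$-free constants. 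Your observations that the transport terms cancel in the $\lvec$-norm, that $\|\lvect\ScaleS^{-1}\stoich\|_\infty$ is uniformly bounded for $\epsilon\le 1$, and that the mesh dependence survives only through $\beta_2^{\VoxVol}$ all match the paper's remarks following the theorem.
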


The proof follows very closely the steps taken to arrive at
Theorem~\ref{th:exist0} and is therefore
omitted. Theorem~\ref{th:exist1} inherits from Theorem~\ref{th:exist0}
the poorer regularity when $\beta_2 > 0$. The predicted growth is then
$\exp(t \beta_2^{\VoxVol})$ where, as in \eqref{eq:Fdef},
$\beta_2^{\VoxVol} := \beta_2 m_\VoxVol^{-1} \bar{\VoxVol}_{M}^{-1}$
and is dependent on the mesh.

\subsection{Multiscale splittings}
\label{subsec:splitting}

We shall consider two multiscale splittings: one ``exact'' in
continuous time and one ``numerical'' in discrete time-steps of length
$h$.

Thus we firstly define $\ScaleZ$, for $i$ in $\Group_1$ and using that
$\ScaleS_i = 1$,
\begin{align}
  \label{eq:RDMEPoissrepr_ex1}
  \ScaleZ_{ij}(t) = \ScaleZ_{ij}(0) &-
  \sum_{r = 1}^{R} \stoich_{ri} \Pi_{rj}  
  \left( \int_{0}^{t} 
  \VoxVol_j \epsilon^{-\nu_{r}} 
  \bar{u}_r (\VoxVol_j^{-1} \ScaleZ_{\cdot,j}(s)) \, ds \right) \\
  \nonumber
  &-\sum_{k = 1}^{J} \Pi_{ijk}' 
  \left( \int_{0}^{t}
  \epsilon^{-\mu_i} \bar{q}_{ijk} \ScaleZ_{ij}(s)  \, ds \right) \\
  \nonumber
  &+\sum_{k = 1}^{J} \Pi_{ikj}'
  \left( \int_{0}^{t}
  \epsilon^{-\mu_i} \bar{q}_{ikj} \ScaleZ_{ik}(s) \, ds \right),
  \intertext{while for $i$ in $\Group_2$, $\ScaleS_i = \epsilon^{-1}$
    and the Poisson process is approximated by a deterministic
    process,}
  \label{eq:RDMEPoissrepr_ex2}
  \ScaleZ_{ij}(t) = \ScaleZ_{ij}(0) &-
  \sum_{r = 1}^{R} \epsilon \stoich_{ri} 
  \left( \int_{0}^{t} 
  \VoxVol_j \epsilon^{-\nu_{r}} 
  \bar{u}_r (\VoxVol_j^{-1} \ScaleZ_{\cdot,j}(s)) \, ds \right) \\
  \nonumber
  &-\sum_{k = 1}^{J} \epsilon
  \left( \int_{0}^{t}
  \epsilon^{-\mu_i} \bar{q}_{ijk} \ScaleZ_{ij}(s)  \, ds \right) \\
  \nonumber
  &+\sum_{k = 1}^{J} \epsilon
  \left( \int_{0}^{t}
  \epsilon^{-\mu_i} \bar{q}_{ikj} \ScaleZ_{ik}(s) \, ds \right).
\end{align}
In general, there is no guarantee that $\ScaleZ(t)$ remains positive
even when $\ScaleX(t)$ is a conservative chain. For example, the
presence of a dimerization reaction, say, $A+A \to B$ at rate $A(A-1)$
can reach negative values of $B$ when $A$ is approximated by a
continuous variable. In this example one can avoid this problem by
reinterpreting the rate as $A(A-1) \vee 0$. In what follows we will
for simplicity assume that all models are conservative and remain in
the non-negative orthant, presumably after employing some kind of
limiters on the rates.

To see how a result similar to Theorem~\ref{th:exist1} might be
obtained for the new process $\ScaleZ(t)$, we start anew from Dynkin's
formula, appropriately modified for the semi-continuous setting. We
find
\begin{align}
  &\Expect\left[\lunorm{\ScaleZ(\StopT{t})}^{p}\right] = 
  \Expect\left[\lunorm{\ScaleZ(0)}^{p}\right]+
  \Expect\left[\int_0^{\StopT{t}} G(\ScaleZ(s)) \, ds \right],
  \intertext{where}
  \label{eq:Gdef}
  G(Z) &\equiv \sum_{j = 1}^J \sum_{r = 1}^{R} 
  \VoxVol_j u_r(\VoxVol_j^{-1}\ScaleS Z_{\cdot,j}) 
  \left[ \left( z-\lvect_1 \stoich_{r}^{(1)} \right)^{p}-
    z^{p}-p\lvect_2\epsilon\stoich_r^{(2)}z^{p-1}\right],
\end{align}
and where for brevity $z \equiv \lunorm{Z}$ (compare
\eqref{eq:Fdef}). Using
Lemma~\ref{lem:diffbound}~\eqref{eq:signed_diffbound} we find
{\small \begin{align}
    \label{eq:prelGbound}
  G(Z) &\le \sum_{j = 1}^J \sum_{r = 1}^{R} 
  \VoxVol_j u_r(\VoxVol_j^{-1}\ScaleS Z_{\cdot,j}) 
  \left[ -p\lvect \ScaleS^{-1}\stoich_{r}z^{p-1}+
    C_p/2 \left( -\lvect_1 \stoich_{r}^{(1)} \right)^2
    \left[z^{p-2}+C_{\stoich^{(1)}}^{p-2}\right] \right],
\end{align} }
where $C_p$ and $C_{\stoich^{(1)}}$ are defined below
\eqref{eq:Fbound}. The goal here is to obtain a bound $G(Z) \le
C(1+z^p)$ (compare \eqref{eq:Fbound}--\eqref{eq:momest}) and it is not
difficult to see what assumption is required.

\begin{assumption}[\textit{Reaction regularity, semi-continuous case}]
  \label{ass:rboundSZ}
  In Assumption~\ref{ass:rboundS}, replace \eqref{eq:bndS} with
  \begin{align}
    \label{eq:bndSZ}
    \left( -\lvect_1 \stoich^{(1)} \right)^2 u(x)/2 \le 
    B+\beta_1\lnorm{\ScaleS^{-1}x}+\beta_2\lnorm{\ScaleS^{-1}x}^{2}.
  \end{align}
\end{assumption}

This assumption can be understood as firstly, a signed bound
\eqref{eq:1bndS} on the drift-part for the fully coupled system, and
secondly, the extra assumption due to stochasticity \eqref{eq:bndSZ},
which here applies only to $i \in \Group_1$, that is, to the
stochastic part.

Using this in \eqref{eq:prelGbound} we find (compare
\eqref{eq:Fbound})
\begin{align}
  \label{eq:Gbound}
  G(Z) &\le p(A\Vol+\alpha z) z^{p-1}+
  C_p(B\Vol+\beta_{1} z+
  \beta_{2}^\VoxVol z^{2})
  (z^{p-2}+C_{\stoich^{(1)}}^{p-2}),
\end{align}
and following the steps in the proofs of Theorems~\ref{th:exist0} and
\ref{th:exist1} we obtain after some work the following result.

\begin{theorem}[\textit{Regularity, semi-continuous case}]
  \label{th:exist2}
  The statement of Theorem~\ref{th:exist1} applies also to the
  approximating process $\ScaleZ(t)$ with
  Assumption~\ref{ass:rboundSZ} taking the role of
  Assumption~\ref{ass:rboundS}. The existence of solutions then
  concerns the semi-continuous space $\SspaceZ{p}$ and we note that
  the remark following Theorem~\ref{th:exist1} concerning the
  dependence on the mesh regularity remains valid since
  $\beta_2^{\VoxVol}$ is present in \eqref{eq:Gbound}.
\end{theorem}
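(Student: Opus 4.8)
The plan is to mirror the proof of Theorem~\ref{th:exist0} (and its scaled version Theorem~\ref{th:exist1}) step by step, substituting the drift bound \eqref{eq:Gbound} for the drift bound \eqref{eq:Fbound} and establishing the analogue of the quadratic-variation estimate in Lemma~\ref{lem:quadratic_variation}. Since the excerpt has already done the hard modeling work---it has rewritten Dynkin's formula for the semi-continuous process $\ScaleZ$, extracted the drift term $G(Z)$ in \eqref{eq:Gdef}, and produced the crucial bound $G(Z) \le C(1+z^p)$ with the same structural form as $F$ via Assumption~\ref{ass:rboundSZ}---the moment bound corresponding to Theorem~\ref{th:Epbound} follows immediately: insert \eqref{eq:Gbound} into the Dynkin identity, apply Young's inequality exactly as in the passage from \eqref{eq:Fbound} to \eqref{eq:momest}, and close with Gronwall. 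The fact that $\beta_2^{\VoxVol}$ still appears in \eqref{eq:Gbound} is what forces the same $p+1$-moment hypothesis when $\beta_2 > 0$, and this is precisely the dependence that Theorem~\ref{th:exist2} inherits.

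The genuinely new point is the martingale (pathwise supremum) part. First I would decompose $\ScaleZ(\StopT{t})$ into an initial value, a drift integral, and a local martingale $M_{\StopT{t}}$, exactly as in the proof of Theorem~\ref{th:exist0}. The key observation is that the deterministic approximation applied to the $\Group_2$-species in \eqref{eq:RDMEPoissrepr_ex2} contributes \emph{nothing} to the quadratic variation: those terms are ordinary Lebesgue integrals of finite-variation, not counting processes, so they fold entirely into the drift $G(Z)$. Consequently the martingale $M$ arises solely from the $\Group_1$ (stochastic) reactions and the stochastic transport terms, and its jumps are governed by differences of the form $\big(z-\lvect_1\stoich_r^{(1)}\big)^p - z^p$. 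Repeating the computation of Lemma~\ref{lem:quadratic_variation}---bounding $[M]^{1/2}_{\StopT{t}}$ by the $\ell^1$ sum of jump sizes, rewriting as a Lebesgue--Stieltjes integral against the counting processes, taking expectations, and applying Lemma~\ref{lem:diffbound}~\eqref{eq:abs_diffbound} together with \eqref{eq:bndSZ}---yields the semi-continuous analogue of \eqref{eq:quadratic_variation}, again with a leading $\beta_2^{\VoxVol}\lunorm{\ScaleZ(s)}^{p+1}$ term.

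With both ingredients in hand the endgame is identical to Theorem~\ref{th:exist0}: combine the drift bound and the Burkholder control of $\Expect[\sup_{s\le \StopT{t}}|M_s|]$ via $\Expect([M]^{1/2}_{\StopT{t}})$, obtain a Gronwall inequality for $\Expect[\sup_{s\in[0,\StopT{t}]}\lunorm{\ScaleZ(s)}^p]$, and pass $\stopping \to \infty$ by Fatou's lemma to conclude $\ScaleZ \in \SspaceZ{p}$. For the scaled $O(1)$ claims one checks, as in Theorem~\ref{th:exist1}, that every constant is $\epsilon$-independent: this is guaranteed because Assumption~\ref{ass:rboundSZ} postulates an $\epsilon$-independent $\lvec$ and $\epsilon$-independent constants, and because the extra $\epsilon\stoich_r^{(2)}$ factors in \eqref{eq:Gdef} only \emph{improve} the bounds (they carry a positive power of $\epsilon$ and are absorbed harmlessly).

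The main obstacle I anticipate is \emph{verifying} rather than \emph{inventing}: one must confirm that the deterministic $\Group_2$-terms truly drop out of the quadratic variation and are correctly accounted for in the drift---in particular that the correction term $-p\lvect_2\epsilon\stoich_r^{(2)}z^{p-1}$ appearing in \eqref{eq:Gdef} is exactly the first-order Taylor contribution that the continuous approximation removes from the jump difference, so that what remains in $[M]$ involves only $\stoich^{(1)}$. Getting this bookkeeping right is what legitimizes using \eqref{eq:bndSZ} (with $\stoich^{(1)}$ alone) in place of \eqref{eq:bndS}. Because the authors explicitly flag that the steps ``follow the proofs of Theorems~\ref{th:exist0} and \ref{th:exist1}'' and require only ``some work,'' I expect no new analytic difficulty beyond this careful separation of the stochastic and deterministic contributions.
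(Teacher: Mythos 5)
Your proposal matches the paper's intended argument: the paper itself only sketches this proof, stating that one inserts the drift bound \eqref{eq:Gbound} (obtained from Assumption~\ref{ass:rboundSZ}) into the semi-continuous Dynkin formula and then "follows the steps in the proofs of Theorems~\ref{th:exist0} and \ref{th:exist1}," which is precisely your plan of redoing the moment bound, the quadratic-variation lemma (correctly observing that only the $\Group_1$ jumps and stochastic transport contribute to $[M]$, so \eqref{eq:bndSZ} with $\stoich^{(1)}$ alone suffices), Burkholder, Gronwall, and Fatou. Your filled-in details, including the identification of $-p\lvect_2\epsilon\stoich_r^{(2)}z^{p-1}$ as the deterministic generator's contribution to $z^p$, are consistent with \eqref{eq:Gdef} and add nothing contrary to the paper's route.
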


In practice, a numerical method is required to simulate $\ScaleZ$. The
most straightforward way is to evolve the stochastic and deterministic
parts in different steps, introducing a new process $\ScaleY$ which
approximates $\ScaleZ$. Following the partition of unity idea in
\cite{jsdesplit} we define the kernel step function
\begin{align}
  \sigma_{h}(t) &= 1-2 \left( \lfloor t/(h/2) \rfloor
  \mbox{{\footnotesize  mod }} 2 \right).
\end{align}
Then for $i$ in $\Group_1$,
\begin{align}
  \nonumber
  \ScaleY_{ij}(t) = \ScaleY_{ij}(0) &-
  \sum_{r = 1}^{R} \stoich_{ri} \Pi_{rj}  
  \left( \int_{0}^{t} 
  (1+\sigma_h(s)) \VoxVol_j \epsilon^{-\nu_{r}} 
  \bar{u}_r (\VoxVol_j^{-1} \ScaleY_{\cdot,j}(s)) \, ds \right) \\
  \label{eq:RDMEPoissrepr_num1}
  &-\sum_{k = 1}^{J} \Pi_{ijk}' 
  \left( \int_{0}^{t}
  (1+\sigma_h(s)) \epsilon^{-\mu_i} \bar{q}_{ijk} \ScaleY_{ij}(s)  \, ds \right) \\
  \nonumber
  &+\sum_{k = 1}^{J} \Pi_{ikj}'
  \left( \int_{0}^{t}
  (1+\sigma_h(s)) \epsilon^{-\mu_i} \bar{q}_{ikj} \ScaleY_{ik}(s) \, ds \right),
  \intertext{and for $i$ in $\Group_2$,}
  \nonumber
  \ScaleY_{ij}(t) = \ScaleY_{ij}(0) &-
  \sum_{r = 1}^{R} \epsilon \stoich_{ri} 
  \left( \int_{0}^{t} 
  (1-\sigma_h(s)) \VoxVol_j \epsilon^{-\nu_{r}} 
  \bar{u}_r (\VoxVol_j^{-1} \ScaleY_{\cdot,j}(s)) \, ds \right) \\
  \label{eq:RDMEPoissrepr_num2}
  &-\sum_{k = 1}^{J} \epsilon
  \left( \int_{0}^{t}
  (1-\sigma_h(s)) \epsilon^{-\mu_i} \bar{q}_{ijk} \ScaleY_{ij}(s)  \, ds \right) \\
  \nonumber
  &+\sum_{k = 1}^{J} \epsilon
  \left( \int_{0}^{t}
  (1-\sigma_h(s)) \epsilon^{-\mu_i} \bar{q}_{ikj} \ScaleY_{ik}(s) \, ds \right).
\end{align}

For regularity we start anew from the semi-continuous Dynkin's
formula,
\begin{align}
  &\Expect\left[\lunorm{\ScaleY(\StopT{t})}^{p}\right] = 
  \Expect\left[\lunorm{\ScaleY(0)}^{p}\right]+
  \Expect\left[\int_0^{\StopT{t}} H(\ScaleY(s),s) \, ds \right],
  \intertext{where this time}
  \nonumber
  H(Y,s) &\equiv \sum_{j = 1}^J \sum_{r = 1}^{R} 
  (1+\sigma_h(s))\VoxVol_j u_r(\VoxVol_j^{-1}\ScaleS Y_{\cdot,j}) 
  \left[ \left( y-\lvect_1 \stoich_{r}^{(1)} \right)^{p}-
    y^{p}\right] \\
  \label{eq:Hdef}
  &\phantom{\equiv}
  +\sum_{j = 1}^J \sum_{r = 1}^{R} 
  (1-\sigma_h(s))\VoxVol_j u_r(\VoxVol_j^{-1}\ScaleS Y_{\cdot,j})
   \left[ -p\lvect_2\epsilon\stoich_r^{(2)}y^{p-1}\right],
\end{align}
and where as before $y \equiv \lunorm{Y}$. This leads us to

\begin{assumption}[\textit{Reaction regularity, split-step case}]
  \label{ass:rboundSY}
  Besides the modification of Assumption~\ref{ass:rboundSZ}, in
  Assumption~\ref{ass:rboundS}, additionally replace \eqref{eq:1bndS}
  with
  \begin{align}
    \label{eq:1bndSY}
    \max \left(-\lvect_1 \stoich^{(1)} u(x),
    -\lvect_2 \epsilon \stoich^{(2)} u(x) \right) &\le 
    A+\alpha\lnorm{\ScaleS^{-1}x}.
  \end{align}
\end{assumption}
In other words, \eqref{eq:1bndSY} bounds the drift of the stochastic
and continuous parts individually, while as before \eqref{eq:bndSZ} is
employed to bound the quadratic variation of the stochastic part
alone.

Following again the steps in the previous proofs we obtain

\begin{theorem}[\textit{Regularity, split-step case}]
  \label{th:exist3}
  Theorem~\ref{th:exist2} applies also to the approximating process
  $\ScaleY(t)$ under Assumption~\ref{ass:rboundSY}. The resulting
  \textit{a priori} bound is uniform with respect to both $\epsilon$
  and $h$ provided the initial data is.
\end{theorem}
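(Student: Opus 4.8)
The plan is to follow, step by step, the same three-part template already used for Theorems~\ref{th:Epbound}, \ref{th:exist0}, and \ref{th:exist2}---a Gronwall moment estimate, a quadratic-variation bound, and a Burkholder argument for the supremum---while carefully tracking the two complementary time-regimes introduced by the kernel $\sigma_h$. The starting point is the semi-continuous Dynkin identity with $H(Y,s)$ from \eqref{eq:Hdef}. The first observation I would exploit is structural: since $1+\sigma_h(s) \in \{0,2\}$ and $1-\sigma_h(s) \in \{2,0\}$ are complementary (one vanishes precisely when the other equals $2$, off a set of times of measure zero), at each $s$ the integrand $H(Y,s)$ reduces to \emph{twice} either the purely stochastic expression $\sum_{j,r}\VoxVol_j u_r[(y-\lvect_1\stoich_{r}^{(1)})^{p}-y^{p}]$ or the purely deterministic expression $\sum_{j,r}\VoxVol_j u_r[-p\lvect_2\epsilon\stoich_r^{(2)}y^{p-1}]$, with $y \equiv \lunorm{Y}$. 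Applying Lemma~\ref{lem:diffbound}~\eqref{eq:signed_diffbound} to the former extracts the linear drift $-p\sum_{j,r}\VoxVol_j u_r\lvect_1\stoich_r^{(1)}y^{p-1}$ plus a quadratic remainder carrying $(\lvect_1\stoich_r^{(1)})^{2}$, while the latter is already linear.

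The role of Assumption~\ref{ass:rboundSY} then becomes transparent: because the two regimes never overlap in time, the drift is never the \emph{sum} of the two contributions but only \emph{one} of them at any instant, so it suffices to bound each individually---which is exactly what the $\max$ in \eqref{eq:1bndSY} supplies, yielding $A+\alpha\lnorm{\ScaleS^{-1}x}$ whichever sub-step is active. The quadratic remainder occurs only in the stochastic regime and is controlled by \eqref{eq:bndSZ} precisely as in the derivation of \eqref{eq:Gbound}, reproducing the $\beta_2^{\VoxVol}$ term. Collecting terms and absorbing the harmless factor $2$ into the constant gives a pointwise bound $H(Y,s)\le C(1+y^{p})$ in which $C$ depends on the mesh only through $\beta_2^{\VoxVol}$ and, crucially, neither on $h$ (the value of $\sigma_h$ enters only through the factor $2$) nor on $\epsilon$ (the constants in Assumption~\ref{ass:rboundSY} are $\epsilon$-independent by hypothesis). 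Gronwall's inequality then delivers the analogue of Theorem~\ref{th:Epbound}, uniformly in both parameters.

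For the supremum bound I would repeat the quadratic-variation computation of Lemma~\ref{lem:quadratic_variation}. Here the key point is that the martingale part of $\lunorm{\ScaleY}^{p}$ is driven solely by the $\Group_1$ reaction firings in \eqref{eq:RDMEPoissrepr_num1}: the entire $\Group_2$ block \eqref{eq:RDMEPoissrepr_num2} is absolutely continuous, and the transport jumps are $\lvec$-neutral and hence contribute nothing. The factor $1+\sigma_h(s)\le 2$ therefore only doubles the estimate, and Lemma~\ref{lem:diffbound}~\eqref{eq:abs_diffbound} together with \eqref{eq:bndSZ} reproduces the bound of Lemma~\ref{lem:quadratic_variation} up to an $h$- and $\epsilon$-independent constant. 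Combining the moment bound, this quadratic-variation estimate, and Burkholder's inequality exactly as in the proof of Theorem~\ref{th:exist0}---and splitting into the cases $\beta_2 = 0$ and $\beta_2 > 0$ as done there---then yields the claimed regularity in $\SspaceZ{p}$ with an \textit{a priori} bound uniform in $\epsilon$ and $h$.

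The main difficulty is not analytic but a matter of careful bookkeeping: one must verify that the two sub-steps, which act on the \emph{same} reaction $r$ but on disjoint species groups and at disjoint times, are correctly decoupled, so that the individual drift bounds of \eqref{eq:1bndSY} (rather than the coupled bound of Assumption~\ref{ass:rboundS}) are genuinely what is required, and that only the $\Group_1$ reactions enter the quadratic variation. Once this accounting is settled, the remaining estimates are routine repetitions of the earlier proofs.
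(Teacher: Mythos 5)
Your proposal is correct and follows exactly the route the paper intends: the paper only sketches this proof (``following again the steps in the previous proofs''), and your elaboration --- the complementarity of $1\pm\sigma_h$ reducing $H(Y,s)$ to twice one of the two individual drifts, the $\max$ in \eqref{eq:1bndSY} bounding each regime separately, \eqref{eq:bndSZ} controlling the quadratic remainder and the quadratic variation of the $\Group_1$ jumps, and then Gronwall/Burkholder as in Theorems~\ref{th:Epbound} and \ref{th:exist0} --- is precisely the intended argument, including the correct identification of where uniformity in $h$ and $\epsilon$ comes from.
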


The approximation $\ScaleX \approx \ScaleZ$ gives rise to a
\emph{multiscale error}, whereas $\ScaleZ \approx \ScaleY$ induces a
\emph{splitting error}. Quite generally, any practical numerical
method relies on this very structure in $\ScaleX \approx \ScaleY$.
Insight into the nature of the total error thus follows from a
consistent analysis of both approximations. This is the purpose with
the next section.


\section{Error analysis}
\label{sec:analysis}

We present in this section the error analysis of the two
approximations
\eqref{eq:RDMEPoissrepr_ex1}--\eqref{eq:RDMEPoissrepr_ex2} and,
respectively,
\eqref{eq:RDMEPoissrepr_num1}--\eqref{eq:RDMEPoissrepr_num2}. Theorems~\ref{th:exist1},
\ref{th:exist2}, and \ref{th:exist3} assert that all processes are
uniformly stable in finite time. By the Lax principle the task has
therefore been reduced to an investigation of the degree of
consistency of the two approximations. Preliminary lemmas for this are
discussed in \S\ref{subsec:prelest}, followed by the actual error
analysis in \S\ref{subsec:msconv}--\ref{subsec:splitconv}. In order
not to lose the oversight, some material heavily relied upon are
developed separately in Appendix~\ref{sec:mserr} and \ref{sec:sserr}.

\subsection{Preliminary estimates}
\label{subsec:prelest}

Intuitively, the same version of a Poisson process evaluated at two
different operational times should enjoy a bounded difference,
provided of course the times themselves are bounded in some suitable
sense. A precise formulation of this property is related to
\emph{Doob's optional sampling theorem} \cite[Theorem 17,
  Chap.~I.2]{protterSDE} and has only just recently been investigated
\cite{AGangulyMMS2014, tauAnderson} for the $L^1$-norm, and in
\cite{jsdesplit} for the $L^2$-norm.

\begin{lemma}
  \label{lem:PiStop}
  Let $\Pi$ be a unit-rate $\Probfiltr_{t}$-adapted Poisson process,
  and let $T$ be a bounded stopping time. Then
  \begin{align}
    \label{eq:PiStop1}
    \Expect[\Pi(T)] &= \Expect[T], \\
    \label{eq:PiStop2}
    \Expect[\Pi^2(T)] &= 2\Expect[\Pi(T)T]-\Expect[T^{2}]+\Expect[T].
  \end{align}
\end{lemma}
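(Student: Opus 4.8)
The plan is to exploit the two martingales naturally associated with a unit-rate Poisson process: the compensated process $M_t := \Pi(t) - t$, which is a mean-zero $\Probfiltr_t$-martingale, and the process $M_t^2 - t$, which is also a martingale since the predictable quadratic variation of $M$ is $\langle M \rangle_t = t$ (the compensator of the pure-jump process $\Pi$). Both identities then follow from Doob's optional sampling theorem \cite[Theorem 17, Chap.~I.2]{protterSDE} applied at the bounded stopping time $T$, together with the expansion of a square.

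First I would establish \eqref{eq:PiStop1}. Since $T$ is a bounded stopping time, fix a constant $K$ with $T \le K$ almost surely. Optional sampling applied to $M$ gives $\Expect[M_{t \wedge T}] = \Expect[M_0] = 0$ for every $t$, and for $t \ge K$ one has $M_{t \wedge T} = M_T$. To pass to the limit rigorously I would invoke dominated convergence: $|M_{t \wedge T}| \le \Pi(K) + K$, which is integrable because $\Expect[\Pi(K)] = K < \infty$. Hence $\Expect[M_T] = 0$, that is, $\Expect[\Pi(T)] = \Expect[T]$.

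For \eqref{eq:PiStop2} I would repeat the argument with the martingale $M_t^2 - t$. Optional sampling yields $\Expect[M_T^2 - T] = 0$, i.e.\ $\Expect[(\Pi(T) - T)^2] = \Expect[T]$. Expanding the square,
\begin{align*}
\Expect[\Pi^2(T)] - 2\Expect[\Pi(T) T] + \Expect[T^2] = \Expect[T],
\end{align*}
which rearranges directly to the stated identity.

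The main obstacle is the rigorous justification of optional sampling for the quadratic martingale, since one must control the second moment of the stopped process. Here I would again dominate: for $t \le K$ one has $|M_{t \wedge T}^2 - (t \wedge T)| \le (\Pi(K) + K)^2 + K$, and the right-hand side is integrable because the Poisson process at the deterministic time $K$ has finite second moment, $\Expect[\Pi^2(K)] = K + K^2$. Dominated convergence then legitimizes passing $t \to \infty$, and all terms appearing in the expansion (in particular $\Expect[\Pi(T)T]$ and $\Expect[T^2]$) are finite by the same boundedness of $T$ and the integrability of $\Pi(K)$.
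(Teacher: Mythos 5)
Your proof is correct and follows essentially the same route as the paper: optional sampling applied to the compensated martingale $\Pi(t)-t$ for the first identity, and to a quadratic martingale for the second. The only (immaterial) difference is that you compensate $\tilde{\Pi}^2$ by the predictable bracket $t$ where the paper uses the quadratic variation $\Pi(t)$; the two martingales differ by $\tilde{\Pi}$ itself, so the resulting identity is the same once \eqref{eq:PiStop1} is in hand.
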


\begin{proof}
  Let $\tilde{\Pi}(t) := \Pi(t)-t$ be the compensated process. This is
  a martingale and the sampling theorem implies
  $\Expect[\tilde{\Pi}(T)] = 0$, which is \eqref{eq:PiStop1}. The
  quadratic variation is $[\tilde{\Pi}]_{t} = \Pi(t)$ and hence $Z(t)
  := \tilde{\Pi}^2(t)-\Pi(t)$ is a local martingale. Since
  $\Expect[\sup_{s \le t} Z(s)] < \infty$ for bounded $t$, it is
  actually a martingale and the sampling theorem now yields
  $\Expect[Z(T)] = 0$, or,
  \begin{align*}
    0 &= \Expect[\Pi^2(T)-2\Pi(T)T+T^{2}-\Pi(T)],
  \end{align*}
  which is \eqref{eq:PiStop2}.
\end{proof}

\begin{lemma}
  \label{lem:Stopping}
  Let $\Pi$ be a unit-rate $\Probfiltr_{t}$-adapted Poisson process,
  and let $T_1$, $T_2$ be bounded stopping times. Then
  \begin{align}
    \label{eq:Stopping1}
    \Expect[|\Pi(T_2)-\Pi(T_1)|] &= \Expect[|T_2-T_1|], \\
    \label{eq:Stopping2}
    \Expect[(\Pi(T_2)-\Pi(T_1))^2] &=
    2\Expect[|\Pi(T_2)-\Pi(T_1)|(T_1 \vee T_2)] \\
    \nonumber
    &\hphantom{=} -\Expect[|T_2^{2}-T_1^{2}|]+\Expect[|T_2-T_1]].
  \end{align}
\end{lemma}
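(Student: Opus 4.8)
The plan is to reduce both claims to Lemma~\ref{lem:PiStop} by exploiting the fact that the counting process $\Pi$ is non-decreasing. First I would note that $S := T_1 \wedge T_2$ and $T := T_1 \vee T_2$ are again bounded stopping times with $0 \le S \le T$, so Lemma~\ref{lem:PiStop} applies to each of them separately. Monotonicity of $\Pi$ then removes all the absolute values,
\begin{align*}
  |\Pi(T_2)-\Pi(T_1)| &= \Pi(T)-\Pi(S), &
  |T_2-T_1| &= T-S, &
  |T_2^2-T_1^2| &= T^2-S^2,
\end{align*}
the last identity using $T \ge S \ge 0$. Abbreviating $D := \Pi(T)-\Pi(S) \ge 0$, the first claim \eqref{eq:Stopping1} is then immediate from \eqref{eq:PiStop1} applied to $T$ and to $S$, since $\Expect[D] = \Expect[\Pi(T)]-\Expect[\Pi(S)] = \Expect[T]-\Expect[S] = \Expect[T-S]$.

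For \eqref{eq:Stopping2} I would expand $\Expect[D^2] = \Expect[\Pi(T)^2]-2\Expect[\Pi(T)\Pi(S)]+\Expect[\Pi(S)^2]$ and insert \eqref{eq:PiStop2} for the two diagonal second moments. The single new ingredient is the mixed term $\Expect[\Pi(T)\Pi(S)]$, which I would compute by conditioning on $\Probfiltr_S$. Because $\Pi(t)-t$ is a martingale and $S \le T$ are bounded, Doob's optional sampling theorem gives $\Expect[\Pi(T)-T \mid \Probfiltr_S] = \Pi(S)-S$, so that $\Expect[\Pi(T) \mid \Probfiltr_S] = \Pi(S)-S+\Expect[T \mid \Probfiltr_S]$. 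Multiplying by the $\Probfiltr_S$-measurable factor $\Pi(S)$ and taking expectations (tower property) yields
\begin{align*}
  \Expect[\Pi(T)\Pi(S)] = \Expect[\Pi(S)^2]-\Expect[\Pi(S)S]+\Expect[\Pi(S)T].
\end{align*}

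Substituting this back, replacing $\Expect[\Pi(T)^2]$ and $\Expect[\Pi(S)^2]$ by \eqref{eq:PiStop2}, and collecting terms, the spurious $\Expect[\Pi(S)S]$ contributions cancel and one is left with
\begin{align*}
  \Expect[D^2] = 2\Expect[(\Pi(T)-\Pi(S))T]-\Expect[T^2-S^2]+\Expect[T-S],
\end{align*}
which is precisely \eqref{eq:Stopping2} once the monotonicity identities are read in reverse and one recalls $T = T_1 \vee T_2$. I expect the main obstacle to be purely the bookkeeping surrounding this cross term: one must carefully justify the optional sampling step for the pair $S \le T$, confirm that $\Pi(S)$ is $\Probfiltr_S$-measurable so that the tower property applies, and track the several $\Expect[\Pi(S)S]$ and $\Expect[\Pi(S)T]$ pieces so that they combine with the correct signs. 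A slightly cleaner route that sidesteps the explicit cross term is to apply the compensated-martingale idea of Lemma~\ref{lem:PiStop} directly to the increment: the process $t \mapsto (\Pi(t)-\Pi(S)-(t-S))^2-(\Pi(t)-\Pi(S))$ is a martingale vanishing at $t=S$, so optional sampling at $T$ gives $\Expect[(D-(T-S))^2]=\Expect[D]$; expanding this and using $\Expect[S(D-(T-S))]=0$ (again a consequence of optional sampling, since $D-(T-S)=\tilde{\Pi}(T)-\tilde{\Pi}(S)$ has vanishing $\Probfiltr_S$-conditional expectation) recovers \eqref{eq:Stopping2} directly.
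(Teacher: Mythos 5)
Your proposal is correct and follows essentially the same route as the paper: reduce to the ordered pair $S = T_1 \wedge T_2 \le T = T_1 \vee T_2$, apply Lemma~\ref{lem:PiStop} to the diagonal terms, and evaluate the cross term $\Expect[\Pi(T)\Pi(S)]$ by optional sampling of the compensated martingale conditioned on $\Probfiltr_S$ (the paper conditions on $\Probfiltr_{T_1}$ after assuming $T_2 \ge T_1$, which is the same computation). Your closing alternative via the shifted increment $\tilde{\Pi}(T)-\tilde{\Pi}(S)$ is also valid and slightly tidier, but the substance is identical.
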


\begin{proof}
  Assume first that $T_2 \ge T_1$. We get from
  Lemma~\ref{lem:PiStop}~\eqref{eq:PiStop1}
  \begin{align*}
    \Expect[\Pi(T_2)-\Pi(T_1)] &= \Expect[T_2-T_1].
  \end{align*}
  For general stopping times $S_1$, $S_2$, say, not necessarily
  satisfying $S_2 \ge S_1$, \eqref{eq:Stopping1} now follows upon
  substituting $T_1 := S_1 \wedge S_2$ and $T_2 := S_1 \vee S_2$ into
  this equality.

  Next put $X := \Expect[(\Pi(T_2)-\Pi(T_1))^2]$ and assume again that
  $T_2 \ge T_1$. We get
  \begin{align*}
    X &= \Expect[\Pi(T_2)^2 +
        \Pi(T_1)^2 - 2\Pi(T_1)\Pi(T_2)] \\
      &= \Expect[\Pi(T_2)^2 + \Pi(T_1)^2] -
        2\Expect[\Pi(T_1)\Expect[\Pi(T_2)|\Probfiltr_{T_1}]]. \\
    \intertext{To evaluate the iterated expectation note that}
    \Expect[\tilde{\Pi}(T_2)|\Probfiltr_{T_1}] &= \tilde{\Pi}(T_1)
    \Longrightarrow
    \Expect[\Pi(T_2)|\Probfiltr_{T_1}] =
    \Pi(T_1)-T_1+E[T_2|\Probfiltr_{T_1}]. \\
    \intertext{Hence,}
    \Expect[\Pi(T_1)\Expect[\Pi(T_2)|\Probfiltr_{T_1}]] &= 
    \Expect[\Pi(T_1)^2]-
    \Expect[\Pi(T_1)T_1]+\Expect[\Pi(T_1)T_2], \\
    \intertext{and we thus find that}
    X &= \Expect[\Pi(T_2)^2 - \Pi(T_1)^2]
        +2\Expect[\Pi(T_1)T_1]-2\Expect[\Pi(T_1)T_2]. \\
    \intertext{Applying Lemma~\ref{lem:PiStop}~\eqref{eq:PiStop2}
      twice yields finally}
    X &= 2\Expect[(\Pi(T_2)-\Pi(T_1))T_2]-\Expect[T_2^{2}-T_1^{2}]+
    \Expect[T_2-T_1].
  \end{align*}
  For general stopping times $S_1$, $S_2$, \eqref{eq:Stopping2} now
  follows as before upon substituting $T_1 := S_1 \wedge S_2$ and $T_2
  := S_1 \vee S_2$.
\end{proof}

\begin{remark}
  We will use Lemma~\ref{lem:Stopping} in the following form. Assuming
  $T_1 \vee T_2$ has been bounded \textit{a priori} by some value $B$
  we get by combining \eqref{eq:Stopping1} with \eqref{eq:Stopping2}
  that
  \begin{align}
    \label{eq:finalStop}
    \Expect[(\Pi(T_2)-\Pi(T_1))^2] &\le
    (2B+1)\Expect[|T_2-T_1|].
  \end{align}
  Let $\Probfiltr_t$ be the filtration adapted to $\tilde{\Pi}_r, r =
  1 \ldots R$. Then for a fixed $t$, $ T_r(t) = \int_{0}^{t} w_r(X(s))
  \, ds$ is a stopping time \cite[Lemma~3.1]{tauAnderson} with respect
  to $$\tilde{\Probfiltr}^r_u = \sigma\{\Pi_r(s), s \in [0,u];\;
  \Pi_{k \not = r}(s), s\in [0,\infty]\}.$$ Intuitively, as $X(t) =
  \sum_r \Pi_r(T_r(t)) \stoich_r$, the event $\{T_r(t) < u\}$ depends
  on $\Pi_r$ during $[0,u]$ and on all other processes $\{\Pi_k, k\neq
  r\}$ during $[0,\infty)$. However, as $\Pi_r$, $r=1\ldots R$ are
    independent, $\Pi_r(t) - t$ is still a martingale with respect to
    $\tilde{\Probfiltr}^r_u$ (and not only with respect to
    $\Probfiltr^r_u = \sigma\{\Pi_r(s), s \in [0,u] \}$). Hence we can
    apply the stopping time theorems to $T_r(t)$ and the previous
    lemmas therefore apply. The result stays true for the
    approximating process $Z$ (and later $Y$). Hence, given the bound
  \begin{align}
    \int_{0}^{t}w_r(X(s)) \, ds \vee 
    \int_{0}^{t}w_r(Y(s)) \, ds \le B
  \end{align}
  we get from \eqref{eq:finalStop} that
  \begin{align}
    \label{eq:poisson_rmk}
    \nonumber
    \Expect\left[\left(\Pi_r\left(\int_{0}^{t}w_r(X(s)) \, ds\right)-
      \Pi_r\left(\int_{0}^{t}w_r(Y(s)) \, ds\right)\right)^2\right] \\
    \le
    (2B+1)\Expect\left[\left|\int_{0}^{t}w_r(X(s)) \, ds-
      \int_{0}^{t}w_r(Y(s)) \, ds\right|\right].
  \end{align}
\end{remark}

\subsection{Multiscale convergence}
\label{subsec:msconv}

This section develops a bound for the multiscale error made in the
approximation $\ScaleX \approx \ScaleZ$. Throughout \S\ref{sec:jsdes},
a certain weighted norm which greatly simplified the theory was
used. However, in the present case of bounding errors we are
interested in the more conventional $L^2$-norm,
\begin{align}
  \pnorm{\ScaleX(t)}^2 \equiv \sum_{j=1}^J
  \pnorm{\ScaleX(t)_{\cdot,j}}^2 = \sum_{j=1}^J \sum_{i = 1}^D
  \ScaleX(t)_{ij}^2,
\end{align}
where, for convenience, from now on we shall write $\left\| \cdot \right\|$
instead of $\pnorm{\cdot}$.

Let $\Sstopping > 0$ and define the joint stopping time
\begin{align}
  \label{eq:StopT}
  \tau_{\Sstopping} &:= \inf_s\{\|\ScaleX(s)\| \vee
  \|\ScaleZ(s)\| \vee \|\ScaleY(s)\| > \Sstopping\},
  \mbox{ and put } \StopT{t} := \tau_{\Sstopping} \wedge t.
\end{align}
Recall the stopping time $T_r(t)$ from the remark after
Lemma~\ref{lem:Stopping}. Clearly, for any fixed $t$, $T_r(\StopT{t})$
is still a stopping time.

The first step in the analysis is to split the error in one part which
is bounded and one part which is not,
\begin{align}
  \Expect \left[ \|\ScaleX(t)-\ScaleZ(t)\|^2 \right] = 
  \Expect \left[ \|\ScaleX(t)-\ScaleZ(t)\|^2 1_{t > \StopT{t}}  \right]
  +\Expect \left[ \|\ScaleX(t)-\ScaleZ(t)\|^2 1_{t \leq \StopT{t}} \right].
\end{align}
The requirement to be able to control the contribution from the
non-bounded part motivates the following lemma:
\begin{lemma}
  \label{lem:Pbound}
  For any $p>1$, there exists a constant $K_p$ independent from
  $\epsilon$ and $h$ such that
  \begin{align}
    \label{eq:KpX}
    \Expect \left[\|\ScaleX\|^2 1_{t > \StopT{t}}\right] &\le 
    K_p \Sstopping^{-p/2}, \\
    \label{eq:KpZ}
    \Expect \left[\|\ScaleZ\|^2 1_{t > \StopT{t}}\right] &\le 
    K_p \Sstopping^{-p/2}, \\   
    \label{eq:KpY} 
    \Expect \left[\|\ScaleY\|^2 1_{t > \StopT{t}}\right] &\le
    K_p \Sstopping^{-p/2}.
  \end{align}
\end{lemma}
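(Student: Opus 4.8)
The plan is to prove all three estimates simultaneously, since they differ only in the process inside the expectation while the indicator event is common. First I would observe that $\{t > \StopT{t}\} = \{\tau_{\Sstopping} < t\}$, and that on this event the stopping time has already been triggered, so that
\[
  \sup_{s \in [0,t]}\left(\|\ScaleX(s)\| \vee \|\ScaleZ(s)\| \vee \|\ScaleY(s)\|\right) \ge \Sstopping .
\]
This converts control of the indicator into a tail estimate on the running maxima of the three processes. Before proceeding I would record that the Euclidean norm $\pnorm{\cdot}$ and the weighted norm $\lunorm{\cdot}$ are equivalent with constants depending only on $D$, $J$ and $\lvec$, all fixed independently of $\epsilon$ and $h$; hence the sup-moment bounds of Theorems~\ref{th:exist1}, \ref{th:exist2} and \ref{th:exist3} transfer verbatim to $\pnorm{\cdot}$, with constants that stay $O(1)$ in $\epsilon$ and $h$ provided the initial data carries enough bounded moments.

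The core estimate is a Cauchy--Schwarz split followed by Chebyshev's inequality. Writing $A := \{t > \StopT{t}\}$,
\[
  \Expect\left[\|\ScaleX(t)\|^2 \one_A\right] \le
  \Expect\left[\sup_{s \in [0,t]}\|\ScaleX(s)\|^4\right]^{1/2}\Prob(A)^{1/2}.
\]
For the probability factor I would use the union bound together with Markov's inequality at moment order $p$,
\[
  \Prob(A) \le 3\,\Sstopping^{-p}\max_{W \in \{\ScaleX,\ScaleZ,\ScaleY\}}
  \Expect\left[\sup_{s \in [0,t]}\|W\|^p\right] \le C_p \Sstopping^{-p},
\]
where $C_p$ is $O(1)$ in $\epsilon$ and $h$ by the regularity theorems. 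Combining the two displays gives $\Expect[\|\ScaleX(t)\|^2\one_A] \le K_p\Sstopping^{-p/2}$ with $K_p = C_p^{1/2}\,\Expect[\sup_s\|\ScaleX\|^4]^{1/2}$, which is \eqref{eq:KpX}. Estimates \eqref{eq:KpZ} and \eqref{eq:KpY} follow by the identical computation, replacing $\ScaleX$ by $\ScaleZ$ and $\ScaleY$ in the first factor while keeping the same bound on $\Prob(A)$. For a non-integer target $p$ one rounds up to the next integer moment and invokes Lyapunov's inequality, which affects $K_p$ but not the rate $\Sstopping^{-p/2}$; one could likewise replace Cauchy--Schwarz by a general H\"older split to relax the fourth-moment requirement, but since all moments are available this is unnecessary.

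The genuine mathematical content is light --- Chebyshev plus Cauchy--Schwarz --- so the main obstacle is the bookkeeping of uniformity in $\epsilon$ and $h$ rather than any single hard estimate. Two points require care. First, the bounds in Theorems~\ref{th:exist1}--\ref{th:exist3} are stated for the weighted norm and for integer orders, so I must confirm that the equivalence constants and the integer rounding do not introduce hidden $\epsilon$- or $h$-dependence; they do not, since $\lvec$ and the dimensions are fixed. Second, and most importantly, $C_p$ and the fourth-moment factor are $O(1)$ only if the initial data has sufficiently many moments bounded uniformly in $\epsilon$ --- one extra order when $\beta_2 > 0$, per the caveat in Theorems~\ref{th:exist0}--\ref{th:exist2} --- and this hypothesis must be carried along explicitly so that $K_p$ is genuinely independent of $\epsilon$ and $h$ as claimed.
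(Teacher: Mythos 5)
Your proposal is correct and follows essentially the same route as the paper: Cauchy--Schwarz to split off the indicator, a union bound over the three processes, and Markov's inequality applied to the sup-moment bounds of Theorems~\ref{th:exist1}--\ref{th:exist3}, with norm equivalence transferring the weighted-norm estimates to $\pnorm{\cdot}$. The only cosmetic difference is that the paper uses the pointwise fourth moment from Theorem~\ref{th:Epbound} in the Cauchy--Schwarz factor rather than the running supremum, which changes nothing.
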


\begin{proof}
  Theorem~\ref{th:exist1} yields
  \begin{align*}
	\Expect\left[\lunorm{\ScaleX(t)}^{4}\right] &\le
        \left( \Expect\left[\lunorm{\ScaleX(0)}^{4}\right] +1\right)\exp(Ct)-1.
  \end{align*}
  Since $\left\|\cdot\right\|$ and $\lunorm{\cdot}$ are equivalent
  bounds we have an \textit{a priori} bound
  \begin{align*}
    \Expect \left[ \|\ScaleX(t)\|^4\right] &\le B(t),
  \end{align*}
  with $B(t)$ independent from $\epsilon$. By Cauchy-Schwartz's
  inequality,
  \begin{align*}
   \Expect \left[\|\ScaleX(t)\|^2 1_{t > \StopT{t}}\right]
    &\leq \Expect \left[\|\ScaleX(t)\|^4 \right]^{1/2}\Prob[t > \StopT{t}]^{1/2}.
  \end{align*}
  Using that
  \begin{align*}
    \Prob[t > \StopT{t}]  \leq \Prob[\sup_{s \in [0,t]}
    \|\ScaleX_{s}\| > \Sstopping] + \Prob[\sup_{s \in [0,t]}
    \|\ScaleZ_{s}\| > \Sstopping] + \Prob[\sup_{s \in [0,t]}
    \|\ScaleY_{s}\| > \Sstopping],
  \end{align*}
  we find from Markov's inequality the bound
  \begin{align*}
    \Prob[t > \StopT{t}] \times \Sstopping^{p} &\leq 
    \Expect\left[ \left(\sup_{s\in [0,t]} \|\ScaleX(s)\|\right)^p
      \right] +  \Expect\left[ \left(\sup_{s\in [0,t]}
      \|\ScaleY(s)\|\right)^p \right] \\
    &+ \Expect\left[ \left(\sup_{s\in [0,t]} \|\ScaleZ(s)\|\right)^p \right].
  \end{align*}
  Using the second part of Theorem~\ref{th:exist1} and the equivalence
  of norms, it is possible to bound the first term on the right
  independently from $\epsilon$ and $h$. Reasoning similarly for the
  terms depending on $\ScaleY$ and $\ScaleZ$ we get the stated result.
\end{proof}

To formulate the main result of this section we let
\begin{align}
  R(\Group_1) &:= \{ r; \; \exists i  \in  
                \Group_1 \text{ such that } \stoich_{ri} \neq 0 \},
\end{align}
and the analogous definition for $R(\Group_2)$. In words,
$R(\Group_1)$ contains the reactions which affect any species $i \in
\Group_1$. We additionally define the two effective exponents
\begin{align}
  \label{eq:udef}
  u &= \min_{r \in R(\Group_1)} -\nu_r \wedge \min_{i\in
      \Group_1} -\mu_i, \\
  \label{eq:vdef}
  v &= 1+\min_{r \in R(\Group_2)} -\nu_r \wedge \min_{i \in \Group_2} -\mu_i.
\end{align}
Note that, if the transport rates do not scale with $\epsilon$, we
generally get $u \le 0$ and $v \le 1$.

\begin{theorem}[\textit{Multiscale error, bounded version}]
  \label{th:ScalingErrorBounded}
  Under the scale separation Condition~\ref{cond:scales}, the
  regularity Assumptions~\ref{ass:rboundS} and \ref{ass:rboundSZ}, and
  assuming also that $\ScaleZ$ and $\ScaleX$ are uniformly bounded
  with respect to $\epsilon$ by some $\Sstopping$, then whenever
  $u \geq 0$, $v \geq 0$ it holds that
  \begin{align}
    \Expect[\|\ScaleZ(t)-\ScaleX(t)\|^2] = O(\epsilon^{1+v} +
    \epsilon^{1/2 + v/2 + u}).
  \end{align}
\end{theorem}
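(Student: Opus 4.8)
The plan is to control the error $E := \ScaleX - \ScaleZ$ through \emph{two} coupled Gronwall arguments, one at the level of the $L^1$-quantity $g(t) := \Expect[\|E(\StopT{t})\|]$ and one at the level of $\psi(t) := \Expect[\|E(\StopT{t})\|^2]$, the point being that the optional-sampling estimates only deliver $L^1$-control of the intensity mismatch. First I would subtract \eqref{eq:RDMEPoissrepr_ex1}--\eqref{eq:RDMEPoissrepr_ex2} from the corresponding lines of the governing equation for $\ScaleX$ componentwise, grouping the difference by species group and by reaction versus transport type. Writing $\Lambda^{\ScaleX}_{rj}(t) = \int_0^t \VoxVol_j\epsilon^{-\nu_r}\bar u_r(\VoxVol_j^{-1}\ScaleX_{\cdot,j})\,ds$ and analogously $\Lambda^{\ScaleZ}_{rj}$, the two groups are qualitatively different: for $i\in\Group_1$ both processes carry genuine Poisson increments, so the error is a pure feedback term $\Pi_{rj}(\Lambda^{\ScaleX}_{rj})-\Pi_{rj}(\Lambda^{\ScaleZ}_{rj})$ (and the analogous transport differences), whereas for $i\in\Group_2$ the factor $\ScaleS_i^{-1}=\epsilon$ multiplies the mismatch $\Pi_{rj}(\Lambda^{\ScaleX}_{rj})-\Lambda^{\ScaleZ}_{rj}$ between a Poisson process and a deterministic intensity, which I would split as $(\Pi_{rj}(\Lambda^{\ScaleX}_{rj})-\Lambda^{\ScaleX}_{rj})+(\Lambda^{\ScaleX}_{rj}-\Lambda^{\ScaleZ}_{rj})$.

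For the $L^1$-estimate, the $\Group_1$ feedback contributes exactly $\Expect[|\Pi(\Lambda^{\ScaleX})-\Pi(\Lambda^{\ScaleZ})|]=\Expect[|\Lambda^{\ScaleX}-\Lambda^{\ScaleZ}|]$ by Lemma~\ref{lem:Stopping}~\eqref{eq:Stopping1}, which the Lipschitz bound \eqref{eq:lipS} turns into $\epsilon^{-\nu_r}\LipPr_r\int_0^t g(s)\,ds$; for $\Group_2$ the compensated injection $\epsilon(\Pi(\Lambda^{\ScaleX})-\Lambda^{\ScaleX})$ has $L^1$-size at most $\epsilon\,\Expect[(\Pi(\Lambda^{\ScaleX})-\Lambda^{\ScaleX})^2]^{1/2}=\epsilon\,\Expect[\Lambda^{\ScaleX}]^{1/2}$ via Lemma~\ref{lem:PiStop}~\eqref{eq:PiStop2}, and since the $\Sstopping$-bound makes $\Lambda^{\ScaleX}=O(\epsilon^{-\nu_r})$ (respectively $O(\epsilon^{-\mu_i})$ for transport) this is $O(\epsilon^{(1+v)/2})$ after taking the worst scaling over $R(\Group_2)$ and $\Group_2$, exactly matching $v$ in \eqref{eq:vdef}. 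The remaining Lipschitz pieces fold into a linear Gronwall kernel, so that $g(t)=O(\epsilon^{(1+v)/2})$ uniformly in $\epsilon,h$ whenever $u,v\ge0$.

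For the $L^2$-estimate I would expand $\|E\|^2$ and take expectations. The $\Group_2$ compensated injection now contributes the \emph{leading source}: its variance is $\epsilon^2\Expect[\Lambda^{\ScaleX}]=O(\epsilon^{1+v})$, again by Lemma~\ref{lem:PiStop}~\eqref{eq:PiStop2}. The $\Group_1$ feedback is bounded through the remark after Lemma~\ref{lem:Stopping}, in the form \eqref{eq:poisson_rmk}, giving $(2B+1)\Expect[|\Lambda^{\ScaleX}-\Lambda^{\ScaleZ}|]\le C\epsilon^{u}\int_0^t g(s)\,ds$; here the coefficient is the worst feedback weight $\epsilon^{u}$ of \eqref{eq:udef}, and substituting the bound $g=O(\epsilon^{(1+v)/2})$ from the previous step produces precisely the cross term of order $\epsilon^{u}\cdot\epsilon^{(1+v)/2}=\epsilon^{1/2+v/2+u}$. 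The $\Group_2$ Lipschitz piece $\epsilon^2\Expect[(\Lambda^{\ScaleX}-\Lambda^{\ScaleZ})^2]$ is handled by Cauchy--Schwarz and \eqref{eq:lipS} into a term $C_2\int_0^t\psi(s)\,ds$ with $C_2$ bounded for $v\ge0$, which a linear Gronwall absorbs into an $O(1)$ multiplicative constant. Collecting, $\psi(t)\le O(\epsilon^{1+v})+O(\epsilon^{1/2+v/2+u})+C_2\int_0^t\psi(s)\,ds$, and Gronwall closes to the asserted rate.

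The main obstacle is the $L^1$-only nature of the Poisson estimates for the stochastic species in $\Group_1$: the $L^2$-inequality cannot close on itself, and it is exactly the two-level scheme — an $L^1$-Gronwall feeding the $L^2$-Gronwall — that generates the intermediate exponent $\epsilon^{1/2+v/2+u}$ rather than the cruder $\epsilon^{2u}$ one would get from a naive nonlinear (square-root) Gronwall. The remaining difficulty is bookkeeping: one must verify that every reaction and transport prefactor $\epsilon^{-\nu_r}$, $\epsilon^{-\mu_i}$ collapses into the two effective exponents $u$ and $v$, and that under Condition~\ref{cond:scales}, Assumptions~\ref{ass:rboundS}--\ref{ass:rboundSZ}, and the \textit{a priori} $\Sstopping$-bound all constants ($B$, $C_2$, the Gronwall factors) stay uniform in $\epsilon$ and $h$ precisely when $u\ge0$ and $v\ge0$.
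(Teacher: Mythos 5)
Your proposal is correct and follows essentially the same route as the paper: the paper packages your inline consistency estimates into Appendix Lemmas~\ref{lem:PDMPCVSquare} and \ref{lem:PDMPCV} (with exactly your decomposition — optional sampling via \eqref{eq:Stopping1} and \eqref{eq:poisson_rmk} for the $\Group_1$ feedback, compensated Poisson martingales for the $\Group_2$ injection, yielding $A=O(\epsilon^{1+v})$, $C=O(\epsilon^{u})$, $D=O(\epsilon^{1/2+v/2})$), and then runs the same two-level scheme in which the $L^1$ Gronwall bound is fed into the $L^2$ Gronwall inequality to produce the cross term $\epsilon^{1/2+v/2+u}$. The only detail you omit is the final removal of the stopping time, which is immediate here since the processes are assumed bounded by $\Sstopping$.
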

\begin{proof}
  First notice that, since the processes are uniformly bounded with
  respect to $\epsilon$, so is $\LipPr_r$. Thus according to
  Lemma~\ref{lem:PDMPCVSquare},
  \begin{align*}
    \Expect[\|\ScaleZ(\StopT{t})-\ScaleX(\StopT{t})\|^2] &\lec
    A+B \int_0^t \Expect[\|\ScaleZ(\StopT{s})-\ScaleX(\StopT{s})\|^2 ] \, ds+
   C \int_0^t \Expect[\|\ScaleZ(\StopT{s})-\ScaleX(\StopT{s})\| ] \, ds,
  \end{align*}
  with
  \begin{align*}
    A &= O(\epsilon^{1+v}), \;
    B = O(\epsilon^{2v}), \;
    C = O(\epsilon^{u}).
  \end{align*}
  Similarly, according to
  Lemma~\ref{lem:PDMPCV}, $$\Expect[\|\ScaleX(\StopT{t})-\ScaleZ(\StopT{t})\|]
  \lec D + E \int_{0}^{t} \Expect[\|\ScaleX(\StopT{s}) -
    \ScaleZ(\StopT{s})\|] \, ds,$$ where
  \begin{align}
    D &= O(\epsilon^{1/2+v/2}), \; 
    E = O(\epsilon^{u}).
  \end{align}
  Thus using the Gronwall inequality we find firstly,
  \begin{align*}
    \Expect[\|\ScaleX(\StopT{t})-\ScaleZ(\StopT{t})\|] &=
    O(\epsilon^{1/2+v/2}). \\
    \intertext{Using this and Gronwall's inequality a second time gives}
    \Expect[\|\ScaleZ(\StopT{t})-\ScaleX(\StopT{t})\|^2]  &= O(\epsilon^{1 + v} + \epsilon^{1/2 + v/2 + u}).
  \end{align*}
  Suppose for the moment that $\ScaleY$ is uniformly bounded by
  $\Sstopping$ with respect to $\epsilon$ and $h$.  As the processes
  are bounded by $\Sstopping$,
  $\Expect[\|\ScaleX(\StopT{t})-\ScaleZ(\StopT{t})\|^2] =
  \Expect[\|\ScaleX(t)-\ScaleZ(t)\|^2]$ and we get the stated result.

  The extra assumption that $\ScaleY$ is uniformly bounded can easily
  be removed by changing the definition of $\tau$ in \eqref{eq:StopT}
  into
  \begin{align*}
    \tau_{\Sstopping} &:= \inf_s\{\|\ScaleX(s)\| \vee
    \|\ScaleZ(s)\| > \Sstopping\}.
  \end{align*}
\end{proof}

The two terms in the error bound can be interpreted as firstly, the
error introduced in the macro-species, $\epsilon^{1+v}$, and secondly,
the error made in the meso-species, $\epsilon^{1/2 + v/2 + u}$,
respectively.

In order to obtain a theorem also in the unbounded case, the growth of
the local Lipschitz constants has to be controlled, and so we make the
following convenient assumption:
\begin{assumption}
  \label{ass:lip}
  There exists $\lipa_1,\ldots,\lipa_R \geq 0$ such that $
  \LipPr_r(\Sstopping) \lec \Sstopping^{\lipa_r}$. Furthermore, we
  assume $\lipa_r = 0$ for each $r$ such that that $\nu_r = 0$. Hence
  the Lipschitz constants associated with these transitions are
  bounded independently from $\Sstopping$.
\end{assumption}

As in the appendix we use the notation ``$A \lec B$'' to indicate that
$A \le C B$ for some constant $C > 0$ which is $O(1)$ with respect to
$\epsilon$, $\Sstopping$, and $h$.

\begin{theorem}[\textit{Multiscale error}]
  \label{th:ScalingError}
  Under the scale separation Condition~\ref{cond:scales}, and under
  the regularity Assumptions~\ref{ass:rboundS}, \ref{ass:rboundSZ},
  and \ref{ass:lip}, and the additional conditions $u \geq 0$, $v > 0$,
  it holds that
  \begin{align}
    \Expect[\|\ScaleZ(t)-\ScaleX(t)\|^2] = O(\epsilon^{1+v -} +
    \epsilon^{1/2 + v/2 + u-}).
  \end{align}
\end{theorem}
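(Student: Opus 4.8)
The plan is to reduce the unbounded statement to the bounded estimate of Theorem~\ref{th:ScalingErrorBounded} by splitting the error at the joint stopping time $\StopT{t} = \tau_{\Sstopping} \wedge t$ from \eqref{eq:StopT}, and then optimizing over the threshold $\Sstopping$. First I would write
\begin{align*}
  \Expect[\|\ScaleX(t)-\ScaleZ(t)\|^2] &=
  \Expect[\|\ScaleX(t)-\ScaleZ(t)\|^2 1_{t \le \StopT{t}}]
  +\Expect[\|\ScaleX(t)-\ScaleZ(t)\|^2 1_{t > \StopT{t}}].
\end{align*}
On $\{t \le \StopT{t}\}$ all three processes remain below $\Sstopping$, so $\ScaleX(t) = \ScaleX(\StopT{t})$ and $\ScaleZ(t) = \ScaleZ(\StopT{t})$, whence the first term is dominated by $\Expect[\|\ScaleX(\StopT{t})-\ScaleZ(\StopT{t})\|^2]$, to which the bounded analysis applies. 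For the second term I would use $\|\ScaleX-\ScaleZ\|^2 \le 2\|\ScaleX\|^2+2\|\ScaleZ\|^2$ together with Lemma~\ref{lem:Pbound} to obtain, for every $p > 1$, a bound $\lec \Sstopping^{-p/2}$ with constant independent of $\epsilon$ and $h$.

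The essential new ingredient is to make the dependence on $\Sstopping$ explicit in the bounded estimate. Re-examining the proof of Theorem~\ref{th:ScalingErrorBounded}, the threshold enters only through the local Lipschitz constants $\LipPr_r(\Sstopping)$ used in Lemmas~\ref{lem:PDMPCVSquare} and \ref{lem:PDMPCV}. Under Assumption~\ref{ass:lip} these grow at most like $\Sstopping^{\lipa_r}$, and, crucially, $\lipa_r = 0$ whenever $\nu_r = 0$. I would track these factors through the two Gronwall recursions, the key observation being that the stability constants $B = O(\epsilon^{2v})$, $C = O(\epsilon^{u})$, and $E = O(\epsilon^{u})$ carry nonnegative powers of $\epsilon$ once $u \ge 0$ and $v > 0$: any reaction in $R(\Group_1)$ or $R(\Group_2)$ contributing to them either has $\nu_r = 0$, hence $\lipa_r = 0$ and an $\Sstopping$-uniform constant, or is strictly slow, in which case its rate carries a strictly positive power of $\epsilon$ that will absorb the polynomial growth $\Sstopping^{\lipa_r}$ after the final substitution. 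Consequently the exponential factors produced by Gronwall's inequality stay bounded uniformly in $\Sstopping$, and only the inhomogeneous consistency terms acquire a polynomial prefactor. This yields a refined bound of the form
\begin{align*}
  \Expect[\|\ScaleX(\StopT{t})-\ScaleZ(\StopT{t})\|^2] &\lec
  \Sstopping^{\lipa}\left(\epsilon^{1+v}+\epsilon^{1/2+v/2+u}\right)
\end{align*}
for some aggregate exponent $\lipa \ge 0$ determined by the individual $\lipa_r$.

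It then remains to balance the two contributions. Choosing $\Sstopping = \epsilon^{-\theta}$ for a parameter $\theta > 0$ turns the refined bounded estimate into $\lec \epsilon^{1+v-\theta\lipa}+\epsilon^{1/2+v/2+u-\theta\lipa}$ and the tail bound into $\lec \epsilon^{\theta p/2}$. Given any $\delta > 0$, I would first fix $\theta$ small enough that $\theta\lipa < \delta$, and then choose $p$ large enough that $\theta p/2 \ge \max(1+v,\,1/2+v/2+u)$, so that the tail term is negligible relative to the leading ones. Summing yields $\Expect[\|\ScaleX(t)-\ScaleZ(t)\|^2] = O(\epsilon^{1+v-\delta}+\epsilon^{1/2+v/2+u-\delta})$, which is precisely the claimed $O(\epsilon^{1+v-}+\epsilon^{1/2+v/2+u-})$ since $\delta$ is arbitrary.

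The main obstacle I anticipate is the second step: verifying that the Gronwall constants remain bounded in $\Sstopping$ rather than merely polynomially bounded, since an exponential factor $e^{B t}$ with $B$ growing in $\Sstopping$ would destroy the balance. This is exactly where the strict inequality $v > 0$ (as opposed to the $v \ge 0$ permitted in the bounded theorem) and the structural condition $\lipa_r = 0$ for $\nu_r = 0$ in Assumption~\ref{ass:lip} become indispensable. The delicate bookkeeping of which reactions contribute to each of $A,B,C,D,E$ and with what power of $\Sstopping$ must be carried out by re-deriving Lemmas~\ref{lem:PDMPCVSquare} and \ref{lem:PDMPCV} with the Lipschitz dependence retained explicitly.
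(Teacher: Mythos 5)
Your proposal is correct and follows essentially the same route as the paper: split the error at the joint stopping time, control the tail via Lemma~\ref{lem:Pbound}, track the polynomial growth $\LipPr_r(\Sstopping) \lec \Sstopping^{\lipa_r}$ through the two Gronwall recursions of the bounded proof, and then set $\Sstopping = \epsilon^{-\theta}$ with $\theta \to 0$ and $p \to \infty$ to balance the contributions. The paper records the slightly sharper prefactors $\Sstopping^{\lipa+1}$ and $\Sstopping^{(3\lipa+1)/2}$ on the two consistency terms and explicitly defers the boundary case $u=0$ (where $\nu_r = 0$ forces $\lipa_r = 0$) as notationally cumbersome, but your treatment of that dichotomy and of the uniform boundedness of the Gronwall exponentials matches the intended argument.
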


\begin{proof}
  The proof here concerns the case $u > 0$. The special case from
  Assumption~\ref{ass:lip} where $\nu_r = 0$ and $a_r = 0$ for some
  $r$ (and thus $u = 0$) is similar but requires some cumbersome
  notation and is therefore omitted. Select
  $\Sstopping = \epsilon^{-\lipb}$ for some $\lipb>0$ and let $p>1$,
  $\lipa := \max_r \lipa_r$.  Following the same pattern as in the
  proof of Theorem~\ref{th:ScalingErrorBounded} we get
  $$\Expect[\|\ScaleZ(\StopT{t})-\ScaleX(\StopT{t})\|^2]  = O(\epsilon^{1 + v-(\lipa+1) \lipb} + \epsilon^{1/2 + v/2 + u-(3\lipa + 1) \lipb/2}).$$
  Thus using Lemma~\ref{lem:Pbound},
  \begin{align*}
    \Expect[\|\ScaleZ(t)-\ScaleX(t)\|^2] 
    &= \Expect[\|\ScaleZ(\StopT{t})-\ScaleX(\StopT{t})\|^2 1_{t\leq \StopT{t}}] + \Expect[\|\ScaleZ(t)-\ScaleX(t)\|^2 1_{t > \StopT{t}}] \\
    &\leq \Expect[\|\ScaleZ(\StopT{t})-\ScaleX(\StopT{t})\|^2] + 4 K_p \epsilon^{\lipb p/2} \\
    &=  O(\epsilon^{1 + v-(\lipa+1) \lipb} + \epsilon^{1/2 + v/2 + u -
      (3\lipa + 1) \lipb/2} +  \epsilon^{\lipb p/2}) .
  \end{align*}
  As $\lipb p/2$ can be made arbitrarily large while $(\lipa+1) \lipb$
  and $(3\lipa + 1) \lipb/2$ can be made arbitrarily close to $0$
  (i.e.~$\lipb \rightarrow 0$ and $p \rightarrow \infty$), we arrive
  at the stated bound.
\end{proof}

\begin{remark}
  It is possible to get a convergence result for the case $u = v =
  0$. However, in this case the error bound is of the form
  $O(\log(1/\epsilon)^{-\delta})$ and the dominating part can be traced back
  to Lemma~\ref{lem:Pbound}.
\end{remark}

\subsection{Splitting convergence}
\label{subsec:splitconv}

We next consider the error in the approximation
$\ScaleZ \approx \ScaleY$, that is, the splitting error. For this part
we are able to prove a somewhat weak error bound in the general case,
while the situation improves considerably if the processes are assumed
to be bounded \textit{a priori}.

\begin{theorem}[\textit{Splitting error, bounded version}]
  \label{th:spliterr2}
  Under the scale separation Condition~\ref{cond:scales}, the
  regularity Assumptions~\ref{ass:rboundSZ}, \ref{ass:rboundSY}, and
  assuming also that $\ScaleX$, $\ScaleZ$, and $\ScaleY$ are uniformly
  bounded with respect to $h$ and $\epsilon$ by $\Sstopping$, then
  whenever $u\geq 0$, $v \geq 0$ it holds that
  \begin{align}
    \Expect\left[\|\ScaleZ(t)-\ScaleY(t)\|^2\right] \le 
    O\left(h(\epsilon^{2u} + \epsilon^{u+v}) \right)  + O\left(h^2\epsilon^{2v}\right).
  \end{align}
\end{theorem}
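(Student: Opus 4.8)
The plan is to compare the two processes $\ScaleZ$ and $\ScaleY$ term-by-term using their integral representations \eqref{eq:RDMEPoissrepr_ex1}--\eqref{eq:RDMEPoissrepr_ex2} and \eqref{eq:RDMEPoissrepr_num1}--\eqref{eq:RDMEPoissrepr_num2}, and to set up a Gronwall-type differential inequality for $g(t) := \Expect[\|\ScaleZ(\StopT{t})-\ScaleY(\StopT{t})\|^2]$. The crucial structural observation is that $\ScaleZ$ and $\ScaleY$ differ only in how the fast/slow clocks are arranged: $\ScaleZ$ runs both the stochastic ($\Group_1$) and deterministic ($\Group_2$) flows simultaneously at rate $1$, whereas $\ScaleY$ alternates, running the stochastic part at rate $(1+\sigma_h)$ and the deterministic part at rate $(1-\sigma_h)$. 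Since $\sigma_h$ averages to zero over each window of length $h$, the time-integrated rates agree to within $O(h)$; the error is therefore driven by the local mismatch of the Strang/Lie-type splitting, which is exactly where the powers of $h$ and $\epsilon$ will come from.

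Let me write the derivation in this order. \textbf{First}, I would split $\|\ScaleZ(\StopT{t})-\ScaleY(\StopT{t})\|^2$ into the $\Group_1$ (meso) and $\Group_2$ (macro) contributions. \textbf{Second}, for each coordinate I subtract the two integral equations and isolate two kinds of terms: (i) \emph{Lipschitz} terms where both processes see the same clock schedule but different states $\ScaleZ(s)$ versus $\ScaleY(s)$ --- these are controlled using the local Lipschitz bound \eqref{eq:lipS} (bounded uniformly by $\Sstopping$ by hypothesis) and feed back into the Gronwall integrand $\int_0^t g(s)\,ds$; and (ii) \emph{splitting} terms, arising purely from the factor $\sigma_h(s)$, where I would exploit that $\int_0^{\StopT{t}} \sigma_h(s)\,\phi(s)\,ds$ is small whenever $\phi$ is slowly varying, since $\sigma_h$ is a mean-zero square wave of period $h$. \textbf{Third}, to quantify the splitting terms I integrate by parts (or use the telescoping-over-windows estimate from Appendix~\ref{sec:sserr}) against the primitive of $\sigma_h$, which is bounded by $h/2$; the variation of $\phi$ over a window then produces the extra factor of $h$, giving $h$ for the first-order ($L^1$-in-time, hence the $h^1$ terms) contributions and $h^2$ for the genuinely second-order cancellation. \textbf{Fourth}, I track the $\epsilon$-powers through the scaled rates: the stochastic $\Group_1$ fluxes carry $\epsilon^{-\nu_r}$ and $\epsilon^{-\mu_i}$, collected into the exponent $u$ via \eqref{eq:udef}, while the deterministic $\Group_2$ fluxes carry the prefactor $\epsilon$ together with $\epsilon^{-\nu_r}, \epsilon^{-\mu_i}$, collected into $v$ via \eqref{eq:vdef}. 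The Poisson-clock comparison for the $\Group_1$ part additionally invokes \eqref{eq:poisson_rmk} from the remark after Lemma~\ref{lem:Stopping}, which converts a difference of Poisson evaluations into the $L^1$-difference of their operational times at the cost of the factor $(2B+1)$; this is what turns an $h^2\epsilon^{2v}$-type square into the $h(\epsilon^{2u}+\epsilon^{u+v})$ martingale-variance contribution.

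The bookkeeping naturally separates into a \emph{deterministic} drift error of size $O(h^2\epsilon^{2v})$ --- this is the classical second-order splitting residual for the macroscopic $\Group_2$ flow after the mean-zero cancellation of $\sigma_h$ --- and a \emph{stochastic} variance error of size $O(h(\epsilon^{2u}+\epsilon^{u+v}))$ coming from the $\Group_1$ Poisson terms, where only first-order-in-$h$ control is available because the jump fluctuations do not cancel in the same way the smooth drift does. After assembling these into the inequality $g(t) \lec O(h(\epsilon^{2u}+\epsilon^{u+v})) + O(h^2\epsilon^{2v}) + B\!\int_0^t g(s)\,ds$, I apply Gronwall (the constant absorbed by $\lec$ being $O(1)$ in $h,\epsilon$ under the boundedness hypothesis) and use that $\StopT{t}=t$ almost surely under the uniform bound $\Sstopping$, yielding the stated result.

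The \textbf{main obstacle} I anticipate is the careful estimation of the $\sigma_h$-oscillatory integrals, specifically establishing the genuine second-order cancellation that produces $h^2$ rather than merely $h$ in the deterministic term: this requires showing that the first-order contribution of $\sigma_h$ integrates to zero over each period and that the remainder is controlled by the (bounded, by the uniform $\Sstopping$-hypothesis and Lipschitz assumption \eqref{eq:lipS}) temporal variation of the integrand. This is precisely the computation I would defer to Appendix~\ref{sec:sserr}; the rest is a relatively routine, if lengthy, tracking of scaling exponents.
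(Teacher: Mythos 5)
Your overall strategy is the same as the paper's: compare the integral representations of $\ScaleZ$ and $\ScaleY$ term by term, control the state-difference terms by the Lipschitz bound \eqref{eq:lipS} (uniform under the $\Sstopping$-hypothesis), control the pure splitting terms by the mean-zero/oscillatory estimate for $\sigma_h$ (Lemma~\ref{lem:sigma2}), convert Poisson-clock differences via \eqref{eq:poisson_rmk}, and close with Gronwall under the stopping time, which is trivial here since the processes are bounded. The paper packages the term-by-term work into Lemmas~\ref{lem:SplitStepCVSquare} and \ref{lem:SplitStepCV}, so up to organization you are reproducing its proof.

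There is, however, one step in your final assembly that would fail as written. You collapse everything into a single inequality $g(t) \lec O(h(\epsilon^{2u}+\epsilon^{u+v})) + O(h^2\epsilon^{2v}) + B\int_0^t g(s)\,ds$ for the second moment $g$ alone. But the Poisson-clock comparison \eqref{eq:poisson_rmk} produces a term of the form $C\int_0^t \Expect[\|\ScaleZ(\StopT{s})-\ScaleY(\StopT{s})\|]\,ds$ with $C = O(\epsilon^{u})$, i.e.\ a \emph{first-moment} integrand, and this cannot be folded into the constant term or into $\int_0^t g$ without losing the rate: Cauchy--Schwarz or Young's inequality applied to $\epsilon^{u}\int\sqrt{g}$ yields an $O(\epsilon^{2u})$ constant with \emph{no} factor of $h$, destroying convergence as $h\to 0$. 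The paper's proof is necessarily two-tiered: one first runs Gronwall on the first-moment error (Lemma~\ref{lem:SplitStepCV}) to obtain $\Expect[\|\ScaleZ(\StopT{t})-\ScaleY(\StopT{t})\|] = O((\epsilon^{u}+\epsilon^{v})h)$, and only then substitutes this into the $\epsilon^{u}$-weighted first-moment integral of the second-moment inequality; it is exactly this substitution, $\epsilon^{u}\cdot(\epsilon^{u}+\epsilon^{v})h$, that generates the cross term $h\epsilon^{u+v}$ you quote. You clearly see where that term comes from (your remark about the $L^1$-difference of operational times and the factor $(2B+1)$), but the preliminary first-moment Gronwall estimate must be stated and proved as a separate step before the second-moment Gronwall can be closed; otherwise the claimed bound does not follow from the inequality you write down.
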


\begin{proof}
  Using Lemma~\ref{lem:SplitStepCVSquare},
  \begin{align*}
    \Expect\left[\|\ScaleZ(\StopT{t})-\ScaleY(\StopT{t})\|^2 \right] &= 
	O(\epsilon^u) \int_0^t \Expect \left[\|\ScaleZ(\StopT{s})-\ScaleY(\StopT{s})\| \right] \, ds \\
	&+ O(\epsilon^{2v} )
        \int_0^t \Expect[\|\ScaleZ(\StopT{s})-\ScaleY(\StopT{s})\|^2] \, ds \\
        &+ O(\epsilon^uh) +  O(\epsilon^{2v} h^2 ).
  \end{align*}
  Using Lemma~\ref{lem:SplitStepCV} and the Gronwall inequality, one
  readily shows that
  \begin{align*}
    \Expect \left[ \|\ScaleZ(\StopT{t})-\ScaleY(\StopT{t})\| \right] &= 
    O\left( (\epsilon^u + \epsilon^v)h\right).
  \end{align*}
  Taken together we find
  \begin{align*}
    \Expect\left[\|\ScaleZ(t)-\ScaleY(t)\|^2\right] &\le O\left((\epsilon^{2u} + \epsilon^{u+v}) h\right) 
    + O\left(\epsilon^{2v} h^2\right) \\
                                                    &+ O(\epsilon^{2v} ) \int_0^t \Expect \left[\|  \ScaleZ(\StopT{s})- \ScaleY(\StopT{s})\|^2 \right] \, ds.
  \end{align*}
  Hence using the Gronwall inequality anew,
  \begin{align*}
    \Expect\left[\|\ScaleZ(t)-\ScaleY(t)\|^2 1_{t\leq \StopT{t}}\right] &= 
    O\left((\epsilon^{2u} + \epsilon^{u+v}) h\right)  + 
    O\left(\epsilon^{2v} h^{2}\right).
  \end{align*}
  Furthermore, as the processes are bounded by $\Sstopping$,
  $\Expect[\|\ScaleZ(\StopT{t})-\ScaleY(\StopT{t})\|^2] =
  \Expect[\|\ScaleZ(t)-\ScaleY(t)\|^2]$ and we get the stated result.
\end{proof}

As before one can appreciate the two terms of the error as the error
made in the meso-species, $(\epsilon^{2u} + \epsilon^{u+v})h$, and
$\epsilon^{2v} h^{2}$, the error introduced in the macro-species.

\begin{theorem}[\textit{Splitting error}]
  \label{th:spliterr1}
  Under the scale separation Condition~\ref{cond:scales}, and under
  the regularity Assumptions~\ref{ass:rboundSZ}, \ref{ass:rboundSY},
  and \ref{ass:lip}, and the additional conditions $u > 0$, $v > 0$,
  it holds that
  \begin{align}
    \lim_{h \to 0} \Expect\left[\|\ScaleZ(t)-\ScaleY(t)\|^2\right] = 0.
  \end{align}
\end{theorem}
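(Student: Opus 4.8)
The plan is to mirror the passage from the bounded to the unbounded multiscale estimate, i.e.~the step from Theorem~\ref{th:ScalingErrorBounded} to Theorem~\ref{th:ScalingError}, but now with $\epsilon$ held fixed and $h \to 0$ as the limiting regime. First I would reintroduce the joint stopping time $\StopT{t} = \tau_{\Sstopping} \wedge t$ built from $\|\ScaleZ\|$ and $\|\ScaleY\|$ and split
\begin{align*}
  \Expect[\|\ScaleZ(t)-\ScaleY(t)\|^2] &=
  \Expect[\|\ScaleZ(\StopT{t})-\ScaleY(\StopT{t})\|^2 1_{t \le \StopT{t}}]
  +\Expect[\|\ScaleZ(t)-\ScaleY(t)\|^2 1_{t > \StopT{t}}],
\end{align*}
so that on the first event both processes are bounded by $\Sstopping$, while the second event has small probability. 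The second term is handled directly by Lemma~\ref{lem:Pbound}: using $\|\ScaleZ-\ScaleY\|^2 \le 2\|\ScaleZ\|^2 + 2\|\ScaleY\|^2$ together with \eqref{eq:KpZ}--\eqref{eq:KpY} gives a bound $\lec \Sstopping^{-p/2}$ valid for every $p > 1$, with a constant independent of both $\epsilon$ and $h$.

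For the bounded part I would rerun the proof of Theorem~\ref{th:spliterr2}, but feed in the local Lipschitz constants through Assumption~\ref{ass:lip}, $\LipPr_r(\Sstopping) \lec \Sstopping^{\lipa_r}$. Writing $\lipa = \max_r \lipa_r$, the split-step estimates of Lemma~\ref{lem:SplitStepCVSquare} and Lemma~\ref{lem:SplitStepCV} then acquire explicit powers of $\Sstopping$ in their coefficients. Carrying these through the two Gronwall applications exactly as in Theorem~\ref{th:spliterr2} produces a bound of the schematic form
\begin{align*}
  \Expect[\|\ScaleZ(\StopT{t})-\ScaleY(\StopT{t})\|^2 1_{t \le \StopT{t}}]
  &\lec \exp\!\left(C\,\Sstopping^{q}\,t\right)\Sstopping^{c}
  \left(h\,(\epsilon^{2u}+\epsilon^{u+v}) + h^2\epsilon^{2v}\right),
\end{align*}
for fixed positive exponents $q,c$ depending only on $\lipa$, the conditions $u>0$, $v>0$ guaranteeing, as in Theorem~\ref{th:ScalingError}, that the $\epsilon$-powers entering here are nonnegative.

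The crux --- and the one genuinely new difficulty relative to the bounded statement --- is the exponential prefactor $\exp(C\Sstopping^{q}t)$. It arises because the $L^2$-Gronwall coefficient now scales like $\epsilon^{2v}\Sstopping^{2\lipa}$ rather than like a quantity that becomes small with $\epsilon$; with $\epsilon$ fixed I can no longer exploit smallness of this coefficient as was done implicitly in Theorem~\ref{th:ScalingError}. Simply fixing $\Sstopping$ leaves the residual floor $\Sstopping^{-p/2}$ from Lemma~\ref{lem:Pbound}, so $\Sstopping$ must be sent to infinity; but then the exponential threatens to overwhelm the gain from $h \to 0$.

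The resolution is to couple the cutoff to $h$ and let it grow only logarithmically. Choosing $\Sstopping = \Sstopping(h) = (\log(1/h))^{1/(2q)} \to \infty$ as $h \to 0$, one has $\Sstopping^{q} = (\log(1/h))^{1/2} = o(\log(1/h))$, whence
\begin{align*}
  \log\!\left(\exp(C\Sstopping^{q}t)\,\Sstopping^{c}\,h\right)
  &= C\,t\,(\log(1/h))^{1/2} + \tfrac{c}{2q}\log\log(1/h) - \log(1/h)
  \longrightarrow -\infty,
\end{align*}
so the bounded part tends to $0$, while the unbounded part $\lec \Sstopping(h)^{-p/2} \to 0$ for any fixed $p > 1$. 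Combining the two contributions yields $\lim_{h \to 0}\Expect[\|\ScaleZ(t)-\ScaleY(t)\|^2] = 0$, as claimed. This logarithmic balancing is precisely the mechanism alluded to in the remark following Theorem~\ref{th:ScalingError}, and it explains why only convergence, and not an algebraic rate, can be asserted here.
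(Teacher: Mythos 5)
Your proposal is correct and rests on the same ingredients as the paper's proof --- the joint stopping-time decomposition, Lemma~\ref{lem:Pbound} for the unbounded part, and the bounded-version estimates of Lemmas~\ref{lem:SplitStepCVSquare}--\ref{lem:SplitStepCV} pushed through Gronwall --- but you combine the two limits differently. The paper fixes $\Sstopping$ first: for each fixed $\Sstopping$ the bounded part tends to $0$ as $h \to 0$ (so the exponential prefactor $\exp(C\Sstopping^{q}t)$ that you identify as ``the crux'' is just a fixed constant and never needs to be tracked), while Lemma~\ref{lem:Pbound} makes the tail small \emph{uniformly in $h$ and $\epsilon$}; the two limits are then iterated in the standard fashion. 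You instead couple the cutoff to the step size, $\Sstopping(h) \sim (\log(1/h))^{1/(2q)}$, and check that the exponential loses to the algebraic gain in $h$. Both routes are valid; yours is more laborious but also more quantitative, and it is precisely the mechanism behind the remark following the theorem, which asserts an $O(\log(1/h)^{-\delta})$ rate dominated by the tail contribution. One minor caveat: your choice of $\Sstopping(h)$ degenerates when $q = 2\lipa = 0$, but in that case there is no exponential growth in $\Sstopping$ to beat and any $\Sstopping(h) \to \infty$ growing slowly enough (or the paper's iterated-limit argument) finishes the proof.
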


\begin{proof}
  Following the same pattern as in the proof of the bounded version,
  it is easy to show that for each $\Sstopping$,
  $$ \Expect\left[\|\ScaleZ(t)-\ScaleY(t)\|^2 1_{t\leq \StopT{t}}\right] \xrightarrow[h \rightarrow 0]{} 0.$$
  We conclude the argument using Lemma~\ref{lem:Pbound}, which implies
  that
  $$\Expect\left[\|\ScaleZ(t)-\ScaleY(t)\|^2 1_{t\geq
      \StopT{t}}\right]\xrightarrow[\Sstopping \rightarrow \infty]{}
  0$$
  uniformly with respect to $h$.
\end{proof}

\begin{remark}
  Under the Assumptions of Theorem~\ref{th:spliterr1}, it is possible
  to get an error bound of the form
  $$\Expect\left[\|\ScaleZ(t)-\ScaleY(t)\|^2\right] \le O
  \left(\log(1/h)^{-\delta} \right),$$
  for any $\delta$ greater than some $\delta_0$. However, in this case
  the error can be traced to the unbounded part as covered by
  Lemma~\ref{lem:Pbound}.
\end{remark}


\section{Numerical examples}
\label{sec:examples}

We now proceed to illustrate our main findings through some
prototypical cases. An all-linear isomerization-type system is
investigated in \S\ref{subsec:isomerization} and a nonlinear catalytic
model in \S\ref{subsec:catalytic}.

In the experiments below we considered reactions taking place in a
one-dimensional geometry $[0,1)$ under periodic boundary
conditions. The geometry was discretized into 10 equally spaced
segments and a diffusion process implemented via the standard 2nd
order finite difference stencil, re-interpreted as linearly
dependent transition rates. As for the initial data, we let each
segment contain either 10 or 20 molecules for the mesoscopic
(discrete) species and $20\epsilon^{-1}$ or $10\epsilon^{-1}$ for
the macroscopic (continuous) species, respectively.

The exact dynamics \eqref{eq:RDMEPoissreprS} was simulated in an
operational time framework. Here we relied on an implementation of the
\emph{All Events Method} \cite{aem_proceeding}, essentially a spatial
extension of the \emph{Common Reaction Path Method}
\cite{sensitivitySSA} which evolves \eqref{eq:RDMEPoissreprS} using
separate Poisson processes for all events.

The multiscale approximation
\eqref{eq:RDMEPoissrepr_ex1}--\eqref{eq:RDMEPoissrepr_ex2} falls under
the scope of \emph{Piecewise Deterministic Markov Processes (PDMPs)}
for which accurate methods have been proposed \cite{hybridMarkov}. We
implemented this through the use of \emph{event-detection} in solvers
for Ordinary Differential Equations (ODEs). Notably, this allows for a
fully consistent coupling with \eqref{eq:RDMEPoissreprS} in
operational time.

Finally, the split-step approximation
\eqref{eq:RDMEPoissrepr_num1}--\eqref{eq:RDMEPoissrepr_num2} was
implemented. This is quite straightforward via the kernel step
function representation and executes very efficiently. The split-step
error is much more challenging to determine accurately than the
multiscale error is. In fact, on a predetermined grid in time the
split-step approximation $\ScaleY_{ij}$ in
\eqref{eq:RDMEPoissrepr_num1} was often found to be exactly equal to
the multiscale approximation $\ScaleZ_{ij}$ in
\eqref{eq:RDMEPoissrepr_ex1}, thus requiring many realizations for
even a very crude estimate.

We make repeated use of the estimator
\begin{align}
  \label{eq:mean}
  \Expect[(Y-X)(t)]^{2} &\approx M \equiv 
  \frac{1}{N}\sum_{i = 1}^{N} (Y-X)(t; \Probelem_{i})^{2},
  \intertext{for independent trajectories $(\Probelem_{i})$. A basic
    confidence interval is obtained by computing}
  \label{eq:std}
  S^{2} &\equiv \frac{1}{N-1}\sum_{i = 1}^{N}
  \left[(Y-X)(t; \Probelem_{i})^{2}-M \right]^{2},
\end{align}
such that the error in the estimator \eqref{eq:mean} is $\propto
S/\sqrt{N}$.

\subsection{Isomerization}
\label{subsec:isomerization}

We first consider the simple linear isomerization reaction pair,
\begin{align}
  A  & \xrightleftharpoons[k_b B]{k_a A} B.
\end{align}
In order for this example to develop a scale separation, for $A$, the
diffusion rate is set to $1/2$ in either direction and per molecule,
and for $B$ to $0$. By selecting $k_a = 1$ and $k_b = \epsilon$, a
scale separation occurs, with $A \sim 10$ and
$B \sim 10\epsilon^{-1}$. We may thus evolve the system by the
multiscale approximation
\eqref{eq:RDMEPoissrepr_ex1}--\eqref{eq:RDMEPoissrepr_ex2}, letting
$A$ remain discrete while $B$ is approximated with a continuous scaled
variable.

Although the unscaled system is closed, from the perspective of scale
separation the system scales unfavorably with $\epsilon$ and hence
falls under the scope of Theorem~\ref{th:ScalingError}. We have
$u = 0$ and $v = 1$ in \eqref{eq:udef}--\eqref{eq:vdef} and thus
expect a mean square error behaving like $O(\epsilon^2)$ for the
macroscopic species and $O(\epsilon)$ for the mesoscopic species. This
is verified in Figure~\ref{fig:iso1} where the multiscale error for
the two components is examined.

Since Theorem~\ref{th:spliterr2} is formally not applicable, the only
result valid is the guaranteed convergence of
Theorem~\ref{th:spliterr1}. Nevertheless, in Figure~\ref{fig:iso2} the
split-step error for the two species have been plotted separately. The
different terms of the error estimate in Theorem~\ref{th:spliterr2}
are clearly visible, suggesting that the uniform bounds on the
processes, as required by Theorem~\ref{th:spliterr2}, may in fact be
relaxed.

Convergence results similar to those of \cite{kurtz_multiscale} and
\cite{kurtz_multiscale2} are here consequences of
Theorem~\ref{th:ScalingError}, with the added benefit of an error
estimate. Indeed, Theorem~\ref{th:ScalingError} yields that the
difference between $\ScaleX$ and $\ScaleZ$ goes to 0 and the
convergence of $\ScaleZ$ is easy to study. Using
\eqref{eq:RDMEPoissrepr_ex1} and \eqref{eq:RDMEPoissrepr_ex2} for
voxel $j$ yields
\begin{align}
  \ScaleZ_{B,j}(t) &= \ScaleZ_{B,j}(0) + \epsilon \int_0^t k_a \ScaleZ_{A,j}(s) \, ds - \epsilon \int_0^t k_b \epsilon^{-1}\ScaleZ_{B,j}(s) \, ds \xrightarrow{\epsilon \rightarrow 0} \ScaleZ_{B,j}(0),
\end{align}
since $(k_a,k_b) = (1,\epsilon)$, and,
\begin{align}
  \nonumber
  \ScaleZ_{A,j}(t) \xrightarrow{\epsilon \rightarrow 0} 
  &\ScaleZ_{A,j}(0) + \Pi_{1,j} \left(\ScaleZ_{B,j}(0) t \right) - 
  \Pi_{2,j} \left(\int_0^t \ScaleZ_{A,j}(s) ds \right) \\
  &+ \sum_{k\in \{j-1,j+1\}} \Pi'_{A,k,j} \left( \int_0^t \ScaleZ_{A,k}(s)/2 \, ds \right) -\Pi'_{A,j,k} \left( \int_0^t \ScaleZ_{A,j}(s)/2 \, ds \right).
\end{align}
Hence for this simple system, the limit $\epsilon \to 0$ for $B$ is
trivial.

\begin{figure}
  \includegraphics{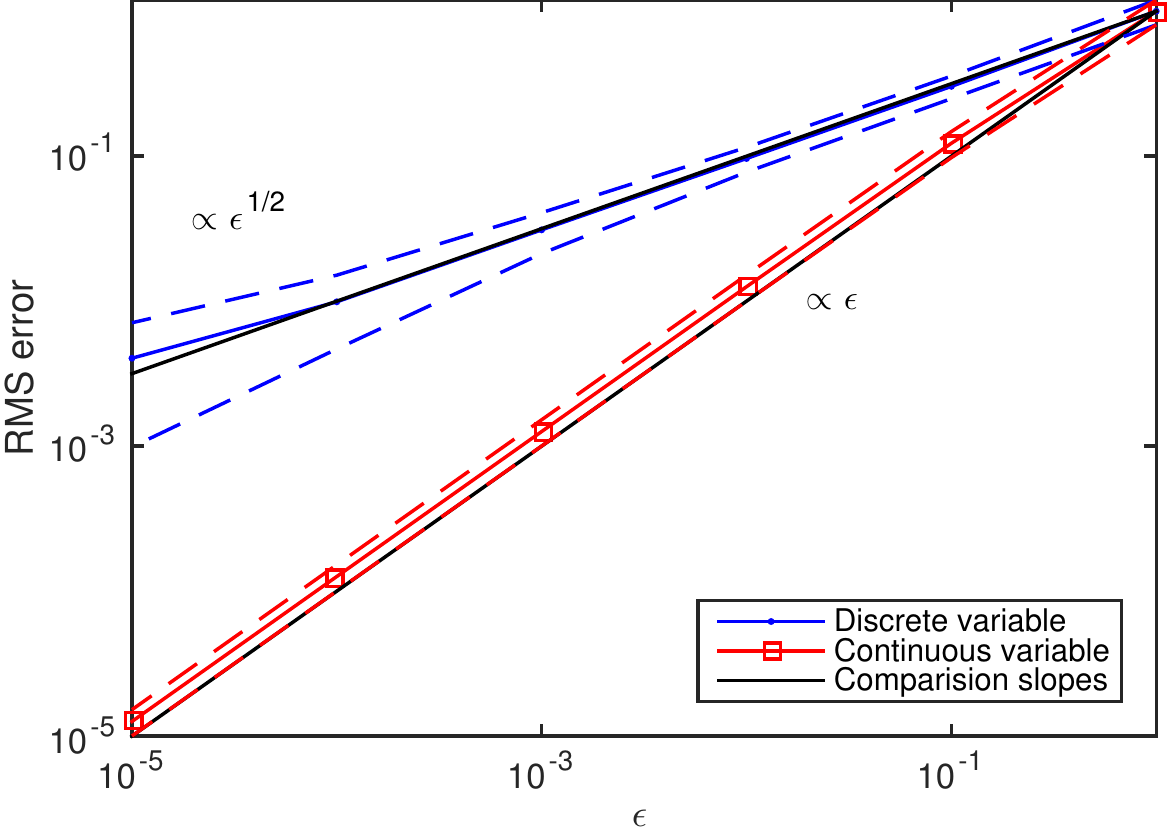}
  \caption{Multiscale error (isomerization): the root mean-square
    (RMS) error as a function of the scale separation $\epsilon$ for
    the two components $A$ (discrete) and $B$ (continuous).}
  \label{fig:iso1}
\end{figure}

\begin{figure}
  \includegraphics{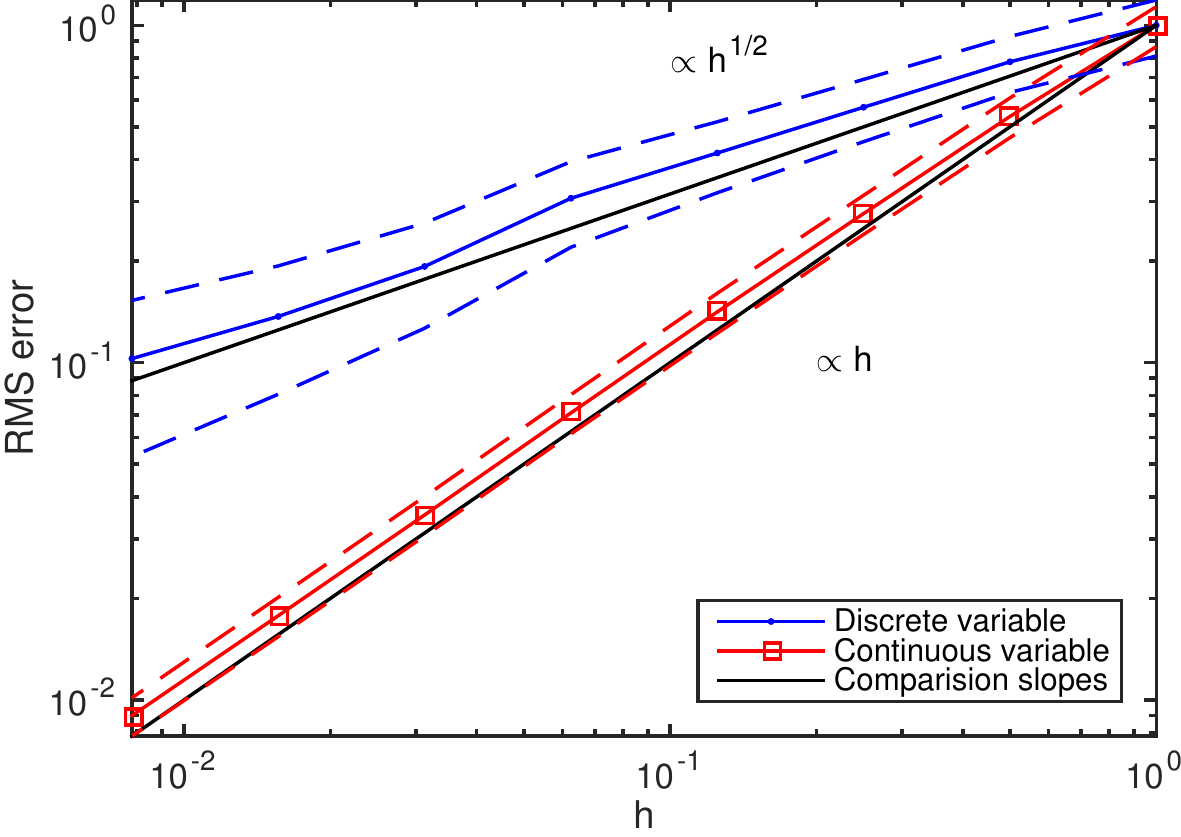}
  \caption{Split-step error (isomerization): the RMS error as a
    function of the split-step $h$ for the two components. Here
    $\epsilon = 10^{-1}$ was used (see Figure~\ref{fig:iso1}).}
  \label{fig:iso2}
\end{figure}

\subsection{Catalytic reactions}
\label{subsec:catalytic}

We consider the following pair of catalytic reactions:
\begin{align}
  \left. \begin{array}{rcl}
           A+B &\xrightarrow{kAB}& C+B \\
           C+D &\xrightarrow{kCD}& A+D \\
           B  & \xrightleftharpoons[k_d D]{k_b B}& D
         \end{array} \right\}
\end{align}
We assume that species $A$ and $C$ are abundant and
$O(\epsilon^{-1})$, and species $B$ and $D$ are $O(1)$. For the
diffusion we put $\sigma_{A,C} = \epsilon$ and $\sigma_{B,D} = 1$, and
for the rates $k = 0.01$ and $(k_b,k_d) = (1,0.9)$. The system so
defined is closed since there is no coupling from the macro-species to
the meso-species (take $\lvec = [1,1,1,1]^T$ in
Assumption~\ref{ass:rboundS}). This property carries over to the
multiscale and split-step approximations
(cf.~Assumptions~\ref{ass:rboundSZ} and \ref{ass:rboundSY}).

For the scale separation, we get the critical exponents $u = v = 0$
and Theorem~\ref{th:ScalingErrorBounded} predicts a slow convergence
of $O(\epsilon^{1/4})$ in the RMS sense. However, since the
meso-species do not depend on the macro-species the corresponding
error is in fact $0$. According to the discussion following the proof
of Theorem~\ref{th:ScalingErrorBounded}, the RMS is therefore
$O(\epsilon^{1/2})$ and is observed in the macroscopic species
only. By the same argument, and from the remark following the proof of
Theorem~\ref{th:spliterr2}, we predict that the RMS of the split-step
error is $O(h)$.

Experimental results verifying this are shown in Figure~\ref{fig:cat1}
for the multiscale error \emph{(``convergent scaling'')} and in
Figure~\ref{fig:cat2} for the split-step error.

Like in the previous example, convergence results similar to those of
\cite{kurtz_multiscale} and \cite{kurtz_multiscale2} are consequences
of Theorem~\ref{th:ScalingErrorBounded}.  This time,
\eqref{eq:RDMEPoissrepr_ex1} and \eqref{eq:RDMEPoissrepr_ex2} are
almost independent of $\epsilon$; only the diffusion for $A$ and $C$
depend on $\epsilon$ and, since $\sigma_{A,C} = \epsilon$, it vanishes
in the limit. For voxel $j$,
\begin{align}
  \ScaleZ_{A,j}(t) &\xrightarrow{\epsilon \rightarrow 0} -\int_0^t k \ScaleZ_{A,j}(s) \ScaleZ_{B,j}(s) \, ds  + \int_0^t k \ScaleZ_{C,j}(s) \ScaleZ_{D,j}(s) \, ds, \\
  \nonumber
  \ScaleZ_{B,j}(t) &= -\Pi_{1,j} \left( k_b \int_0^t \ScaleZ_{B,j}(s) \, ds \right) + \Pi_{2,j} \left( \int_0^t k_a \ScaleZ_{D,j}(s) \, ds \right) \\
  &+ \sum_{k\in \{j-1,j+1\}} \Pi'_{B,k,j} \left( \int_0^t \ScaleZ_{B,k}(s) \, ds \right) - \Pi'_{B,j,k} \left( \int_0^t \ScaleZ_{B,j}(s) \, ds \right) .
\end{align}
The defining equations for $\ScaleZ_{C,j}$ and $\ScaleZ_{D,j}$ are
similar. Thus the limit in this case is not a trivial process,
stressing that non-trivial models can be described within the
framework.

\begin{figure}
  \includegraphics{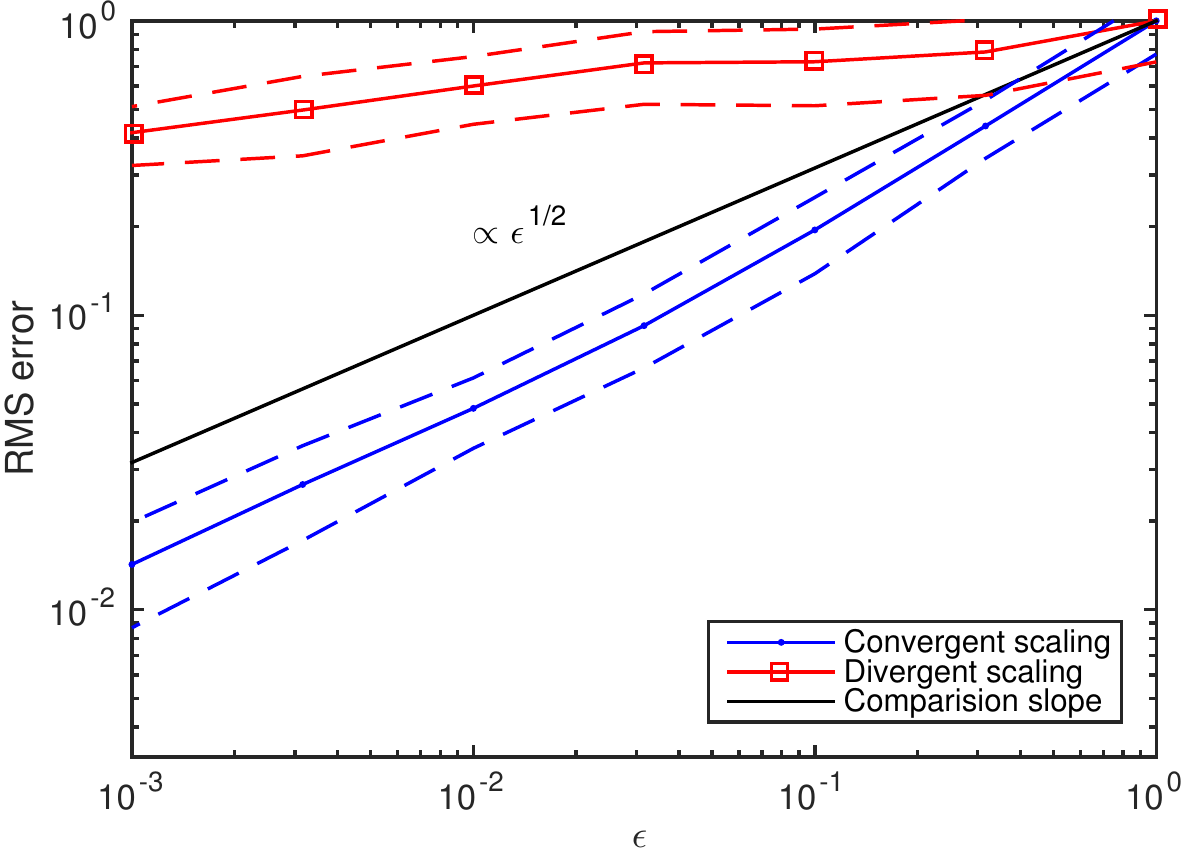}
  \caption{Multiscale errors (catalytic reactions).}
  \label{fig:cat1}
\end{figure}

\begin{figure}
  \includegraphics{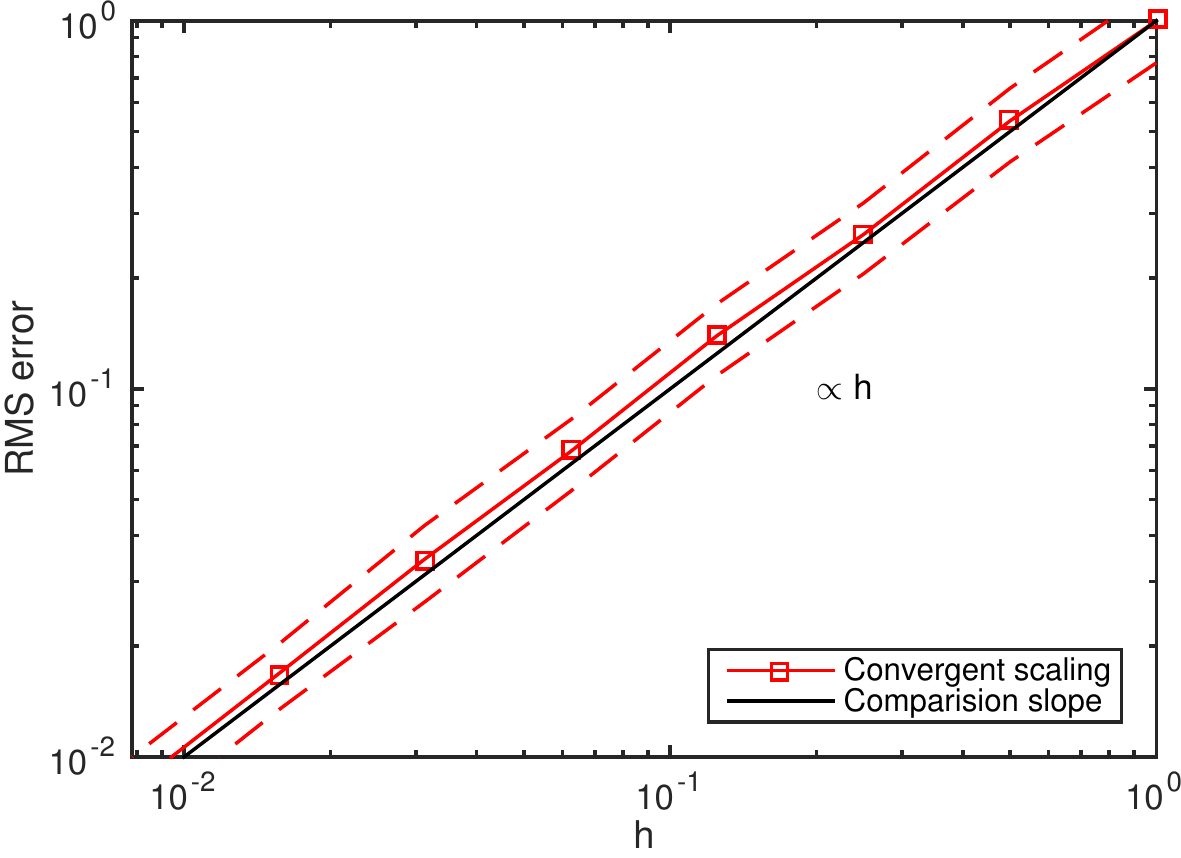}
  \caption{Split-step error (catalytic reactions). Case of
    superconvergence of the split-step method.}
  \label{fig:cat2}
\end{figure}

\subsection{Catalytic reactions: case of unclear scale separation}

It is interesting to turn the scales of the catalytic model around. If
we instead let species $A$ and $C$ be $O(1)$, while $B$ and $D$ are
$O(\epsilon^{-1})$, the topology does not change and we still have a
closed system. We put $k = 0.01 \epsilon^{1/4}$, $(k_b,k_d) =
(1,0.9)$, and use the slow diffusion $\sigma_{A,C} = \sigma_{B,D} =
\epsilon$. The critical exponents become $u = -3/4$ and $v = 0$ and
thus none of the results apply. Although Figure~\ref{fig:cat1}
(\emph{``divergent scaling''}) does not strictly exclude the
possibility of convergence, the error certainly does not go down
convincingly.



\section{Conclusions}
\label{sec:conclusions}

In this paper we have developed a coherent framework for analyzing
certain multiscale methods for continuous-time Markov chains of a
general spatial structure. Concrete assumptions and conditions have
been discovered that enables a multiscale description and a consistent
formulation of the approximating methods in operational time. Notably,
through explicit \textit{a priori} results, all processes are
well-posed and the framework does not rely on any heuristic prior
bounds.

The analysis distinguishes between two separate sources of errors,
namely \emph{the multiscale error} and \emph{the split-step
  error}. The first is due to an approximate stochastic/deterministic
variable splitting strategy, a kind of stochastic homogenization
technique. The second emerges when this approximating process in turn
is evolved in discrete time-steps. Notably, we found theoretically how
the split-step error is composed of factors remindful of the terms
making up the multiscale error, thus connecting the two in a
qualitative sense. The behavior of these errors were also examined
experimentally via actual implementations of the methods. Although
some of the boundary cases are difficult to handle theoretically, in
particular when confronted with open systems, the numerical
experiments support the sharpness of our theoretical predictions.

The work opens up for some interesting possibilities. Clearly, an
ideal implementation should allow the split-step error to be about as
large as the multiscale error. The fully discrete approximation is
amenable to several efficient algorithms developed for numerical
methods for partial differential equations, including for example
multigrid techniques. An interesting challenge to which we would like
to return is to develop practical procedures for computing accurate
error estimates. We believe this is doable following the theory laid
out in the paper.


\section*{Acknowledgment}

Many detailed suggestions by the two referees have helped us to
clarify and improve the paper.

The work was supported by ENS Cachan (A.~Chevallier) and by the
Swedish Research Council within the UPMARC Linnaeus center of
Excellence (S.~Engblom).


\appendix

\section{The multiscale error}
\label{sec:mserr}

Below are the statements and proofs of the two critical lemmas used in
the proof of Theorem~\ref{th:ScalingError}. Recall the definition of
the two effective exponents $u$ and $v$ in
\eqref{eq:udef}--\eqref{eq:vdef}.

\begin{lemma}
  \label{lem:PDMPCVSquare}
  Define $\ScaleX(t)$ by \eqref{eq:RDMEPoissreprS} and $\ScaleZ(t)$ by
  \eqref{eq:RDMEPoissrepr_ex1}--\eqref{eq:RDMEPoissrepr_ex2} with
  $\ScaleX(0) = \ScaleZ(0)$ almost surely. Then under the stopping
  time $\StopT{t}$ defined in \eqref{eq:StopT},
  \begin{align*}
    \Expect[\|\ScaleZ(\StopT{t})-\ScaleX(\StopT{t})\|^2] 
    &\lec A + B \int_0^t 
    \Expect[\|\ScaleZ(\StopT{s})-\ScaleX(\StopT{s})\|^2 ] \, ds +
   C \int_0^t \Expect[\|\ScaleZ(\StopT{s})-\ScaleX(\StopT{s})\| ] \, ds,
  \end{align*}
  where expressions for $A$, $B$, and $C$ are indicated in
  \eqref{eq:bnd_coeffs} below. These bounds depend on $\epsilon$ and
  $\Sstopping$ and on the reaction topology $\stoich$,
  \begin{align}
    A &= \epsilon^{1+v}[\LipPr(\Sstopping)\Sstopping], \quad
    B = \epsilon^{2v}[\LipPr(\Sstopping)]^2, \quad
    C = \epsilon^{u}\LipPr(\Sstopping)
    [\epsilon^{u}\LipPr(\Sstopping)\Sstopping+1].
  \end{align}
\end{lemma}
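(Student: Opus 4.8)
The plan is to expand the componentwise difference $\ScaleZ_{ij}(\StopT{t})-\ScaleX_{ij}(\StopT{t})$ from the defining integral representations \eqref{eq:RDMEPoissreprS} and \eqref{eq:RDMEPoissrepr_ex1}--\eqref{eq:RDMEPoissrepr_ex2}, and then to control $\Expect[\|\ScaleZ(\StopT{t})-\ScaleX(\StopT{t})\|^2]=\sum_{i,j}\Expect[(\ScaleZ_{ij}(\StopT{t})-\ScaleX_{ij}(\StopT{t}))^2]$ term by term. Since $\ScaleX(0)=\ScaleZ(0)$ the initial data cancels. The decisive structural observation is that the two processes differ in fundamentally different ways on the two groups: for $i\in\Group_1$ ($\ScaleS_i=1$) both are driven by the \emph{same} Poisson processes and differ only through the stochastic arguments fed into them, whereas for $i\in\Group_2$ ($\ScaleS_i=\epsilon^{-1}$) the Poisson processes of $\ScaleX$ have been replaced in $\ScaleZ$ by their deterministic compensators.

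For $i\in\Group_2$ I would split each reaction contribution to $\ScaleX_{ij}-\ScaleZ_{ij}$ as $\epsilon\stoich_{ri}[\Pi_{rj}(T^{X})-\int_0^{\StopT{t}}w_{rj}(\ScaleX)]+\epsilon\stoich_{ri}\int_0^{\StopT{t}}[w_{rj}(\ScaleX)-w_{rj}(\ScaleZ)]$, a \emph{fluctuation} (compensated-Poisson) part plus a \emph{propagation} part, and analogously for transport. The fluctuation part is mean-zero and its second moment, by Lemma~\ref{lem:PiStop}~\eqref{eq:PiStop2} at the stopping time, equals exactly $\Expect[\int_0^{\StopT{t}}w_{rj}(\ScaleX)\,ds]$; bounding the propensity under $\Sstopping$ through \eqref{eq:lipS} and collecting the prefactor $\epsilon^2\epsilon^{-\nu_r}$ (resp. $\epsilon^2\epsilon^{-\mu_i}$ for transport), then using $-\nu_r\ge v-1$ and $-\mu_i\ge v-1$ from \eqref{eq:vdef}, yields $\lec\epsilon^{1+v}[\LipPr(\Sstopping)\Sstopping]$, the term $A$. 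The propagation part is a deterministic-integral difference; Cauchy--Schwarz in time, the local Lipschitz bound $|w_{rj}(\ScaleX)-w_{rj}(\ScaleZ)|\lec\epsilon^{-\nu_r}\LipPr(\Sstopping)\|\ScaleX-\ScaleZ\|$, and again $-\nu_r\ge v-1$ give $\epsilon^{2v}[\LipPr(\Sstopping)]^2\int_0^t\Expect[\|\ScaleZ(\StopT{s})-\ScaleX(\StopT{s})\|^2]\,ds$, the term $B$.

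For $i\in\Group_1$ no fluctuation term appears: the contribution of reaction $r$ is $\stoich_{ri}[\Pi_{rj}(T^X)-\Pi_{rj}(T^Z)]$, with $T^{X}:=\int_0^{\StopT{t}}w_{rj}(\ScaleX)\,ds$ and $T^{Z}$ the analogue for $\ScaleZ$, both bounded stopping times under $\StopT{t}$. Here I would invoke the optional-sampling estimate \eqref{eq:poisson_rmk} from the remark after Lemma~\ref{lem:Stopping} (explicitly valid for $\ScaleX$ and $\ScaleZ$), which bounds the expected square of the same-Poisson difference by $(2\tilde B+1)$ times the expected $L^1$ argument difference, $\tilde B$ being an a priori bound on $T^X\vee T^Z$. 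Under the stopping time and with $-\nu_r\ge u$, $-\mu_i\ge u$ from \eqref{eq:udef}, one gets $\tilde B\lec\epsilon^{u}\LipPr(\Sstopping)\Sstopping$ while the argument difference is $\lec\epsilon^{u}\LipPr(\Sstopping)\int_0^t\Expect[\|\ScaleZ(\StopT{s})-\ScaleX(\StopT{s})\|]\,ds$; the transport terms are identical with the linear rates $\bar q_{ijk}$. Collecting the two factors reproduces exactly $C=\epsilon^{u}\LipPr(\Sstopping)[\epsilon^{u}\LipPr(\Sstopping)\Sstopping+1]$. It is precisely because the optional-sampling inequality controls the Poisson difference only through an $L^1$ argument difference that the $\Group_1$ contribution enters linearly (via $C$) rather than quadratically (via $B$).

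To assemble these into the single squared norm I would apply $(\sum_{m=1}^N a_m)^2\le N\sum_{m=1}^N a_m^2$ to the finite family of reaction and transport terms making up each $\ScaleZ_{ij}-\ScaleX_{ij}$; this sidesteps the cross terms at the cost of an $O(1)$ factor, legitimate since the number of terms is uniformly bounded by Definition~\ref{def:mesh} (finite $R$, bounded connectivity $M_D$) independently of $\epsilon$, $\Sstopping$, and $h$. I expect the main obstacle to be the bookkeeping of the $\epsilon$-powers: one must verify that every rate exponent collapses in the correct direction to $\epsilon^{u}$ or $\epsilon^{v}$, which hinges on the sign conditions $u\ge0$, $v\ge0$ (so that $\epsilon^{-\nu_r}\le\epsilon^{u}$ and $\epsilon^{1-\nu_r}\le\epsilon^{v}$ for $\epsilon\le1$), together with the slightly delicate justification that \eqref{eq:poisson_rmk} applies, namely that $T^X$ and $T^Z$ are genuine stopping times for the enlarged filtration under which $\Pi_{rj}$ remains a martingale, as recorded in the remark following Lemma~\ref{lem:Stopping}.
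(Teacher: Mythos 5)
Your proposal follows essentially the same route as the paper's proof: the same group-wise decomposition (compensated-Poisson fluctuation plus Lipschitz propagation term for $\Group_2$, yielding $A$ and $B$; the optional-sampling estimate \eqref{eq:poisson_rmk} for the shared Poisson processes on $\Group_1$, yielding $C$), the same Cauchy--Schwarz and Jensen/connectivity assembly, and the same exponent bookkeeping via the definitions of $u$ and $v$. Your use of Lemma~\ref{lem:PiStop}~\eqref{eq:PiStop2} to get $\Expect[\tilde{\Pi}(T)^2]=\Expect[T]$ is just a repackaging of the paper's quadratic-variation argument for $\tilde{\Pi}$, so the two proofs coincide in substance.
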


To improve the readability of the proof, we use the notation ``$A \lec
B$'' to indicate that $A \le C B$ for some constant $C > 0$ which is
$O(1)$ with respect to $\epsilon$, $\Sstopping$, and $h$. When the
processes are assumed to be bounded \textit{a priori}, clearly,
$\LipPr(c\Sstopping) \lec 1$, for any constant $c > 0$. In the
unbounded case, Assumption~\ref{ass:lip} yields similarly
$\LipPr(c\Sstopping) \lec \LipPr(\Sstopping)$ for any constant $c >
0$. We additionally let $(c_{\lvec},C_{\lvec})$ be the constants in
the norm equivalence
\begin{align}
  c_{\lvec} \pnorm{X} \le \lnorm{X} \le C_{\lvec} \pnorm{X}.
\end{align}

\begin{proof}
  We focus first on a single voxel $j$ and analyze the errors on
  species from $\Group_1(j)$ and $\Group_2(j)$, respectively. For $i
  \in \Group_1(j)$, from \eqref{eq:RDMEPoissreprS} and
  \eqref{eq:RDMEPoissrepr_ex1},
\begin{align*}
  \left( \ScaleZ_{ij}(\StopT{t}) - \ScaleX_{ij}(\StopT{t})\right)^2 = 
  \Bigg[ 
  -&\sum_{r = 1}^{R} \stoich_{ri} \left( \Pi_{rj}  
  \left( \cdot \right)-\Pi_{rj} \left( \cdot \right) \right) \\
  \nonumber
  -&\sum_{k = 1}^{J} \left( \Pi_{ijk}' 
  \left( \cdot \right)
  - \Pi_{ijk}' \left( \cdot \right) \right)+
  \sum_{k = 1}^{J} \left( \Pi_{ikj}'
  \left( \cdot \right) 
  -\Pi_{ikj}'
  \left( \cdot \right) \right) \Bigg]^2
\end{align*}
where we have suppressed the local time arguments of the Poisson
processes, available in \eqref{eq:RDMEPoissreprS} and
\eqref{eq:RDMEPoissrepr_ex1}.

By Jensen's inequality and the bound on the mesh connectivity in
Definition~\ref{def:mesh} \eqref{eq:connectivity} we get
\begin{align*}
  \left( \ScaleZ_{ij}(\StopT{t}) - \ScaleX_{ij}(\StopT{t})\right)^2 \leq
  (R + 2M_D)\left( A_1 + A_2 + A_3 \right),
\end{align*}
where in terms of
\begin{align*}
  A_1 &= \sum_{r = 1}^{R} \stoich_{ri}^2 \Biggl( \Pi_{rj}  
  \left( \int_{0}^{\StopT{t}} 
  \epsilon^{-\nu_r} V_j \bar{u}_r(V_j^{-1}\ScaleZ_{\cdot,j}(s)) \, ds \right) \\
  &\hphantom{= \sum_{r = 1}^{R} \stoich_{ri}^2 } 
  -\Pi_{rj} \left( \int_{0}^{\StopT{t}} \epsilon^{-\nu_r} V_j \bar{u}_r(V_j^{-1} 
  \ScaleX_{\cdot,j}(s)) \, ds \right) \Biggr)^2, \\
  A_2 &= \sum_{k = 1}^{J} \left( \Pi_{ijk}' 
  \left( \int_{0}^{\StopT{t}}
  \epsilon^{-\mu_i} \bar{q}_{ijk}  \ScaleZ_{ij}(s)  \, ds \right)
  - \Pi_{ijk}' \left( \int_{0}^{\StopT{t}}
  \epsilon^{-\mu_i} \bar{q}_{ijk}  \ScaleX_{ij}(s)  \, ds \right) \right)^2, \\
  A_3 &=\sum_{k = 1}^{J} \left( \Pi_{ikj}'
  \left( \int_{0}^{\StopT{t}}
  \epsilon^{-\mu_i} \bar{q}_{ikj} \ScaleZ_{ik}(s) \, ds \right) 
  -\Pi_{ikj}'
  \left( \int_{0}^{\StopT{t}}
  \epsilon^{-\mu_i} \bar{q}_{ikj} \ScaleX_{ik}(s) \, ds \right)
        \right)^2.
\end{align*}

First we need to bound the $\lvec$-norm:
\begin{align*}
  \lnorm{V_j^{-1}\ScaleZ_{\cdot,j}(s)} &\le 
  C_{\lvec} \pnorm{V_j^{-1}\ScaleZ_{\cdot,j}(s)} \le  
  C_{\lvec} V_j^{-1} \Sstopping \le 
  C_{\lvec} m_V^{-1} \bar{V}_M^{-1} \Sstopping.
\end{align*}

Then using the Lipschitz bound \eqref{eq:lipS} in
Assumption~\ref{ass:rboundS}:
\begin{align*}
  \bar{u}_r(V_j^{-1}\ScaleZ_{\cdot,j}(s)) &\le \bar{u}_r(0) + \LipPr_r( C_{\lvec} m_V^{-1} \bar{V}_M^{-1} \Sstopping) \lnorm{V_j^{-1}\ScaleZ_{\cdot,j}(s)} \\
  &\le \bar{u}_r(0) + \LipPr_r( C_{\lvec} m_V^{-1} \bar{V}_M^{-1} \Sstopping) C_{\lvec} \pnorm{V_j^{-1}\ScaleZ_{\cdot,j}(s)} \\
  &\le \bar{u}_r(0) + \LipPr_r( C_{\lvec} m_V^{-1} \bar{V}_M^{-1} \Sstopping) C_{\lvec} V_j^{-1}\Sstopping \\
  &\lec 1 + \LipPr_r(\Sstopping) V_j^{-1}\Sstopping.
\end{align*}
Thus, 
\begin{align*}
  \int_{0}^{\StopT{t}} \epsilon^{-\nu_r} V_j \bar{u}_r(V_j^{-1}\ScaleZ_{\cdot,j}(s)) \, ds 
  &\lec  \int_{0}^{\StopT{t}} \epsilon^{-\nu_r} \left( V_j + \LipPr_r(\Sstopping)\Sstopping \right) \, ds \\
  &\lec  t \epsilon^{-\nu_r} ( M_V \bar{V}_M + \LipPr_r(\Sstopping)\Sstopping) \, ds \\
  &\lec  \epsilon^{-\nu_r} (1 + \LipPr_r(\Sstopping)\Sstopping) \, ds.
\end{align*}
Using the same method for $\ScaleX$, we conclude
\begin{align*}
  \int_{0}^{\StopT{t}}& \epsilon^{-\nu_r} V_j
  \bar{u}_r(V_j^{-1}\ScaleZ_{\cdot,j}(s)) \, ds \, \vee \, 
  \int_{0}^{\StopT{t}} \epsilon^{-\nu_r} V_j
  \bar{u}_r(V_j^{-1}\ScaleX_{\cdot,j}(s)) \, ds \\
  &\lec \epsilon^{-\nu_{r}} (1+\LipPr_r(\Sstopping)  \Sstopping).
\end{align*}
Hence using Lemma~\ref{lem:Stopping} \eqref{eq:poisson_rmk} and again
the Lipschitz bound we get
\begin{align*}
  \Expect[A_1] 
  &\lec \sum_r \stoich_{ri}^2 \epsilon^{-\nu_{r}}  \LipPr_r(\Sstopping)
  \left(\epsilon^{-\nu_{r}}(\LipPr_r(\Sstopping) \Sstopping+1)+1 \right)
  \int_0^t \Expect \left[ \|\ScaleZ(\StopT{s})-\ScaleX(\StopT{s})\| \right]
  \, ds.
\end{align*}
Relying on the same arguments we readily find
\begin{align*}
  \Expect[A_2] &\lec \sum_{k = 1}^{J} \epsilon^{-\mu_i}
  \left( 1 + \epsilon^{-\mu_i} \right)
  \int_0^t \Expect[ \|\ScaleZ(\StopT{s})-\ScaleX(\StopT{s})\|] \, ds,
\end{align*}
and the identical bound for $\Expect[A_3]$.

For $i\in \Group_2(j)$, we similarly get 
\begin{align*}
  \left( \ScaleZ_{ij}(\StopT{t}) - \ScaleX_{ij}(\StopT{t}) \right)^2 \leq
  \epsilon^{2} (R + 2M_D) (A'_1 + A'_2 + A'_3)
\end{align*}
where
{\small \begin{align*}
  A'_1 &=  \sum_{r = 1}^{R} \stoich_{ri}^2 \left( \int_{0}^{\StopT{t}}  \epsilon^{-\nu_r} V_j \bar{u}_r(V_j^{-1}\ScaleZ_{\cdot,j}(s)) \, ds 
  -\Pi_{rj} \left( \int_{0}^{\StopT{t}} \epsilon^{-\nu_r} V_j \bar{u}_r(V_j^{-1}\ScaleX_{\cdot,j}(s)) \, ds\right) \right)^2, \\
  A'_2 &= \sum_{k = 1}^{J} \left( \int_{0}^{\StopT{t}} 
  \epsilon^{ -\mu_i} \bar{q}_{ijk} \ScaleZ_{ij}(s)  \, ds 
  -\Pi_{ijk}' \left(  \int_{0}^{\StopT{t}}  
  \epsilon^{-\mu_i} \bar{q}_{ijk} \ScaleX_{ij}(s)  \, ds 
  \right) \right)^2, \\
  A'_3 &=\sum_{k = 1}^{J} \left( \int_{0}^{\StopT{t}} 
  \epsilon^{ -\mu_i} \bar{q}_{ikj} \ScaleZ_{ik}(s) \, ds  
  -\Pi_{ikj}' \left( \int_{0}^{\StopT{t}} 
  \epsilon^{ -\mu_i} \bar{q}_{ikj} \ScaleX_{ik}(s) \, ds 
  \right)  \right)^2.
\end{align*}}
The analysis is now slightly different. Species from the second group
have a large number of molecules, so $\ScaleX_{ij}(t)$ is expected to
remain close to its mean value. We thus introduce the centered Poisson
processes $\tilde{\Pi}_r$,
\begin{align*}
  A'_1 &= \sum_{r} \stoich_{ri}^2 \Bigg( \int_{0}^{\StopT{t}}  \epsilon^{-\nu_r} V_j
         \left( \bar{u}_r(V_j^{-1}\ScaleZ_{\cdot,j}(s))-
         \bar{u}_r(V_j^{-1}\ScaleX_{\cdot,j}(s)) \right) \, ds \\
  &\hphantom{\sum_{r} \stoich_{ri}^2}-\tilde{\Pi}_{rj}  
  \left( \int_{0}^{\StopT{t}}  \epsilon^{-\nu_r}
  V_j \bar{u}_r(V_j^{-1}\ScaleX_{\cdot,j}(s)) \, ds \right) \Bigg)^2 \\
       &\leq \sum_{r} 2\stoich_{ri}^2 \left(
	  \int_{0}^{\StopT{t}}  \epsilon^{-\nu_r} V_j
	  \left(
         \bar{u}_r(V_j^{-1}\ScaleZ_{\cdot,j}(s))-\bar{u}_r(V_j^{-1}\ScaleX_{\cdot,j}(s))
         \right) \, ds \right)^2 \\
  &\hphantom{\sum_{r}}+2\stoich_{ri}^2 \left( \tilde{\Pi}_{rj}  
         \left( \int_{0}^{\StopT{t}}  \epsilon^{-\nu_r}
         V_j \bar{u}_r(V_j^{-1}\ScaleX_{\cdot,j}(s)) \, ds \right) \right)^2. \\
\end{align*}
Using that the quadratic variation of $\tilde{\Pi}$ is
$[\tilde{\Pi}]_t = \Pi(t)$ and the martingale stopping time theorem we
get
\begin{align*}
  \Expect &\left[\left( \tilde{\Pi}_{rj} 
            \left( \int_{0}^{\StopT{t}}  \epsilon^{-\nu_r} V_j 
            \bar{u}_r(V_j^{-1}\ScaleX_{\cdot,j}(s)) \, ds \right) \right)^2  \right]
  \\
  &= \Expect\left[ \Pi_{rj} \left( \int_{0}^{\StopT{t}}
  \epsilon^{-\nu_r} V_j \bar{u}_r(V_j^{-1}\ScaleX_{\cdot,j}(s)) \, ds
  \right) \right] \\
  &= \Expect\left[ \int_{0}^{\StopT{t}}
  \epsilon^{-\nu_r} V_j \bar{u}_r(V_j^{-1}\ScaleX_{\cdot,j}(s)) \, ds
  \right] \lec \epsilon^{-\nu_{r}} \LipPr_r(\Sstopping) \Sstopping.
\end{align*}
\noindent
Using Cauchy-Schwartz for the remaining integral part and following
the same approach for $A'_2$ and $A'_3$ we get
\begin{align*}
  \Expect[A'_1] &\lec  \sum_{r = 1}^R \stoich^2_{ri} \epsilon^{-\nu_{r}} \LipPr_r(\Sstopping) \Sstopping+
   \sum_{r = 1}^R \stoich^2_{ri} \left( \epsilon^{-\nu_{r}} \LipPr_r(\Sstopping)  \right)^2 \int_0^{t} \Expect[\|\ScaleZ(\StopT{s})-\ScaleX(\StopT{s})\|^2] \, ds, \\
  \Expect[A'_2] &\lec  \sum_{k = 1}^J \epsilon^{-\mu_i} + 
   \sum_{k = 1}^J \left(\epsilon^{-\mu_i} \right)^2 \int_0^{t} \Expect[\|\ScaleZ(\StopT{s})-\ScaleX(\StopT{s})\|^2] \, ds,
\end{align*}
as well as an identical bound for $\Expect[A'_3]$.

We thus get for the $j$th voxel,
\begin{align}
  \nonumber
  &\Expect \left[ \|\ScaleZ_{\cdot,j}(t) - \ScaleX_{\cdot,j}(t)\|^2\right] \lec  
  \sum_{i \in \Group_1(j)} \Expect\left[A_1 + A_2 + A_3 \right]+
  \sum_{i \in \Group_2(j)} \epsilon^{2}
  \Expect\left[A'_1 + A'_2 + A'_3 \right] \\
  \label{eq:bnd_coeffs}
  &\hphantom{\Expect }
    \lec A^{(j)} + B^{(j)} \int_0^{t}
      \Expect[\|\ScaleX(\StopT{s})-\ScaleZ(\StopT{s})\|^2] \, ds
      +C^{(j)} \int_0^{t} \Expect[\|\ScaleX(\StopT{s})-\ScaleZ(\StopT{s})\|] \, ds.
\end{align}
Summing over $j$ we get the stated result with
$A := \sum_{j} A^{(j)}$, $B := \sum_{j} B^{(j)}$, and
$C := \sum_{j} C^{(j)}$.
\end{proof}

\begin{lemma}
  \label{lem:PDMPCV}
  Under the same assumptions as in Lemma~\ref{lem:PDMPCVSquare},
  \begin{align*}
    \Expect[\|\ScaleX(\StopT{t})-\ScaleZ(\StopT{t})\|]
    &\lec D + E \int_{0}^{t}
    \Expect[\|\ScaleX(\StopT{s}) - \ScaleZ(\StopT{s})\| ] \, ds,
  \end{align*}
  where explicit expressions for $D$ and $E$ are found in
  \eqref{eq:bnd_coeffs2} below and depend on $\epsilon$, $\Sstopping$,
  and on the reaction topology $\stoich$,
  \begin{align}
    D &= \epsilon^{1/2+v/2}[\LipPr(\Sstopping)\Sstopping]^{1/2}, \quad
    E = [\epsilon^{u}+\epsilon^{v}]\LipPr(\Sstopping).
  \end{align}
\end{lemma}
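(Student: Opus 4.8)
The plan is to follow the template of the proof of Lemma~\ref{lem:PDMPCVSquare}, but now tracking \emph{first} moments, for which the exact stopping-time identity \eqref{eq:Stopping1} of Lemma~\ref{lem:Stopping} replaces the second-moment bound \eqref{eq:poisson_rmk}. First I would pass from the Euclidean norm to components via $\|\cdot\|_2 \le \|\cdot\|_1$,
\begin{align*}
  \Expect[\|\ScaleZ(\StopT{t})-\ScaleX(\StopT{t})\|]
  &\le \sum_{j=1}^J \sum_{i=1}^D
  \Expect[|\ScaleZ_{ij}(\StopT{t})-\ScaleX_{ij}(\StopT{t})|],
\end{align*}
and then, exactly as before, work one voxel $j$ at a time, splitting the species into the stochastic group $\Group_1(j)$ and the deterministic group $\Group_2(j)$. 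The a priori integrated-rate bounds on $\int_0^{\StopT{t}}\epsilon^{-\nu_r}V_j\bar{u}_r(V_j^{-1}\ScaleX_{\cdot,j})\,ds$ (and the analogues for $\ScaleZ$ and for the transport rates) already established inside the proof of Lemma~\ref{lem:PDMPCVSquare} will be reused verbatim.

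For $i \in \Group_1(j)$ the Poisson processes are present in both $\ScaleX$ and $\ScaleZ$, so after the triangle inequality each reaction term reduces, via \eqref{eq:Stopping1}, to $\Expect[|\Pi_{rj}(T^{Z})-\Pi_{rj}(T^{X})|]=\Expect[|T^{Z}-T^{X}|]$, a pure integrated-rate difference; the Lipschitz bound \eqref{eq:lipS} then makes the integrand linear in $\|\ScaleZ_{\cdot,j}-\ScaleX_{\cdot,j}\|$ (the prefactor $V_j$ cancelling the $V_j^{-1}$ in the argument, up to norm equivalence). Each such term carries a coefficient $\lec \epsilon^{-\nu_r}\LipPr_r(\Sstopping)\le \epsilon^{u}\LipPr(\Sstopping)$ for reactions and $\lec \epsilon^{-\mu_i}\le\epsilon^{u}$ for the two transport sums, all multiplying $\int_0^t\Expect[\|\ScaleZ(\StopT{s})-\ScaleX(\StopT{s})\|]\,ds$. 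Crucially, the stochastic group produces \emph{no} constant term, so it feeds only the $\epsilon^{u}\LipPr(\Sstopping)$ part of $E$.

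For $i \in \Group_2(j)$ the factor $\ScaleS_i^{-1}=\epsilon$ stands in front, and in $\ScaleZ$ the Poisson process has been replaced by its mean. I would insert the compensated process $\tilde{\Pi}_{rj}(t)=\Pi_{rj}(t)-t$ and split each term into a drift difference and a martingale part. The drift difference, controlled by \eqref{eq:lipS}, contributes an integral term with coefficient $\lec\epsilon\cdot\epsilon^{-\nu_r}\LipPr_r(\Sstopping)=\epsilon^{1-\nu_r}\LipPr_r(\Sstopping)\le\epsilon^{v}\LipPr(\Sstopping)$ (and $\epsilon^{1-\mu_i}\le\epsilon^{v}$ for transport), feeding the $\epsilon^{v}\LipPr(\Sstopping)$ part of $E$. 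The martingale part is handled by Jensen's inequality together with the quadratic-variation identity used for Lemma~\ref{lem:PiStop}, namely $\Expect[\tilde{\Pi}(T)^2]=\Expect[\Pi(T)]=\Expect[T]$, giving $\Expect[|\tilde{\Pi}_{rj}(T)|]\le\Expect[T]^{1/2}\lec(\epsilon^{-\nu_r}\LipPr_r(\Sstopping)\Sstopping)^{1/2}$; after the prefactor $\epsilon$ this is $\lec\epsilon^{1-\nu_r/2}(\LipPr_r(\Sstopping)\Sstopping)^{1/2}\le\epsilon^{1/2+v/2}(\LipPr(\Sstopping)\Sstopping)^{1/2}$, the sole source of the constant $D$.

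Summing over the $i$ and using the connectivity bound \eqref{eq:connectivity} to control the transport sums, then over the finitely many voxels $j$, yields the stated inequality with the per-voxel contributions collected into
\begin{align}
  \label{eq:bnd_coeffs2}
  D &= \epsilon^{1/2+v/2}[\LipPr(\Sstopping)\Sstopping]^{1/2}, \quad
  E = [\epsilon^{u}+\epsilon^{v}]\LipPr(\Sstopping),
\end{align}
where I take $\LipPr(\Sstopping)\ge 1$ without loss of generality (replacing it by $\max(\LipPr(\Sstopping),1)$) so that the bare $\epsilon^{u},\epsilon^{v}$ transport factors and the residual $+1$ from the integrated-rate bounds are absorbed. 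The main obstacle is the $\Group_2$ martingale term: it is the one genuinely new ingredient beyond the pointwise-Lipschitz estimates, and pinning down the exponent of $\Sstopping$ hinges on justifying optional sampling for $\tilde{\Pi}_{rj}$ evaluated at the random time $T_r(\StopT{t})$ — legitimate by the filtration argument in the remark following Lemma~\ref{lem:Stopping} — and on the a priori bound for $\Expect[T]$ being uniform in $\epsilon$ and in the mesh. This term is precisely what forces the irreducible $\Sstopping^{1/2}$ in $D$, reflecting that deleting the noise from the macro-species leaves a residual error of the size of that noise's standard deviation.
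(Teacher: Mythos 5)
Your proposal is correct and follows essentially the same route as the paper's proof: per-voxel decomposition into the stochastic group (handled via the exact identity \eqref{eq:Stopping1} plus the Lipschitz bound, yielding only integral terms) and the deterministic group (compensated Poisson process, drift difference into the integral term, martingale part via Jensen and the quadratic-variation identity, yielding the constant $D \lec \epsilon^{1/2+v/2}[\LipPr(\Sstopping)\Sstopping]^{1/2}$). The exponent bookkeeping and the optional-sampling justification match the paper's argument.
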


\begin{proof}
For voxel $j$ and for $i \in \Group_1(j)$,
\begin{align*}
  \left| \ScaleZ_{ij}(\StopT{t}) - \ScaleX_{ij}(\StopT{t}) \right| \leq  
  &\sum_{r = 1}^{R} \left| \stoich_{ri} \left( \Pi_{rj}  
  \left( \cdot \right)-\Pi_{rj} \left( \cdot \right) \right) \right| \\
  +&\sum_{k = 1}^{J} \left| \left( \Pi_{ijk}' 
  \left( \cdot \right)
  - \Pi_{ijk}' \left( \cdot \right) \right) \right|+
  \sum_{k = 1}^{J} \left| \left( \Pi_{ikj}'
  \left( \cdot \right) 
  -\Pi_{ikj}'
  \left( \cdot \right) \right) \right|.
\end{align*}
\noindent
We keep the same notation as in the previous lemma and thus write
\begin{align*}
  &\left| \ScaleZ_{ij}(\StopT{t}) - \ScaleX_{ij}(\StopT{t}) \right| \leq 
  \left( A_1 + A_2 + A_3 \right), \\
  \intertext{where}
  \Expect[A_1 ] &= \sum_{r} |\stoich_{ri}| \Expect \left[ \left| \Pi_{rj}  
    \left( \cdot \right)-\Pi_{rj} \left( \cdot \right) \right | \right] \\
  &= \sum_{r} |\stoich_{ri}| \Expect \left[ \left| 
    \int_{0}^{\StopT{t}} \epsilon^{-\nu_r} \VoxVol_j \bar{u}_r(V_j^{-1}\ScaleZ_{\cdot,j}(s)) \, ds 
    - \int_{0}^{\StopT{t}} \epsilon^{-\nu_r} \VoxVol_j \bar{u}_r(V_j^{-1}\ScaleX_{\cdot,j}(s)) \, ds 
    \right | \right]\\
  &\lec \sum_{r} |\stoich_{ri}| \epsilon^{-\nu_{r}} \LipPr_r(\Sstopping)
  \int_{0}^{t}\Expect\left[\|\ScaleZ_{\cdot,j}(\StopT{s})-\ScaleX_{\cdot,j}(\StopT{s})\|\right].
\end{align*}
\noindent
In the same spirit we find
\begin{align*}
  \Expect[A_2] &\lec \sum_{k = 1}^{J} \epsilon^{-\mu_i}
  \int_{0}^{t} \Expect\left[\|\ScaleZ_{\cdot,j}(\StopT{s})-\ScaleZ_{\cdot,j}(\StopT{s})\|\right],
\end{align*}
and an identical bound for $\Expect[A_3]$.

Continuing with $i \in \Group_2(j)$,
\begin{align*}
  \left( \ScaleZ_{ij}(\StopT{t}) - \ScaleX_{ij}(\StopT{t}) \right) \leq
  \epsilon (A'_1 + A'_2 + A'_3),
\end{align*}
where
\begin{align*}
  A'_1 &= \sum_{r = 1}^{R} |\stoich_{ri}| \left| \int_{0}^{\StopT{t}} \VoxVol_j \epsilon^{-\nu_r} \bar{u}_r(V_j^{-1}\ScaleZ_{\cdot,j}(s)) \, ds 
  -\Pi_{rj} \left( \int_{0}^{\StopT{t}} \VoxVol_j \epsilon^{-\nu_r} \bar{u}_r(V_j^{-1}\ScaleX_{\cdot,j}(s)) \, ds\right) \right|, \\
  A'_2 &= \sum_{k = 1}^{J} \left| \int_{0}^{\StopT{t}} 
  \epsilon^{-\mu_i}\bar{q}_{ijk} \ScaleZ_{ij}(s)  \, ds 
  -\Pi_{ijk}' \left(  \int_{0}^{\StopT{t}}
  \epsilon^{-\mu_i} \bar{q}_{ijk} \ScaleX_{ij}(s)  \, ds 
  \right) \right|, \\
  A'_3 &=\sum_{k = 1}^{J} \left| \int_{0}^{\StopT{t}}
 \epsilon^{-\mu_i} \bar{q}_{ikj} \ScaleZ_{ik}(s) \, ds  
  -\Pi_{ikj}' \left( \int_{0}^{\StopT{t}}
  \epsilon^{-\mu_i}\bar{q}_{ikj} \ScaleX_{ik}(s) \, ds 
  \right)  \right|.
\end{align*}

Using the same techniques developed previously we find
{\small \begin{align*}
  \Expect[A'_1] &\leq \Expect \Biggl[ \sum_{r} |\stoich_{ri}| 
    \int_{0}^{\StopT{t}} \epsilon^{-\nu_{r}} C_{\lvec} \LipPr_r(V_j^{-1}\Sstopping) \|\ScaleZ(s)-\ScaleX(s)\| \, ds \\
    &\hphantom{\leq \qquad}
    +|\stoich_{ri}|\left| \tilde{\Pi}_{rj} \left( \int_{0}^{\StopT{t}} \VoxVol_j \epsilon^{-\nu_r} \bar{u}_r(V_j^{-1}\ScaleX_{\cdot,j}(s)) \, ds\right) 
    \right| \Biggr]\\
  &\lec \sum_{r} |\stoich_{ri}| 
  \epsilon^{-\nu_{r}} \LipPr_r(\Sstopping) \int_{0}^{t} \Expect[\|\ScaleZ(\StopT{s})-\ScaleX(\StopT{s})\| ]\, ds \\
  &\hphantom{\leq \qquad}
  +|\stoich_{ri}|\Expect\left[ \tilde{\Pi}^2_{rj} 
    \left( \int_{0}^{t} \VoxVol_j \epsilon^{-\nu_r} \bar{u}_r(V_j^{-1}\ScaleX_{\cdot,j}(\StopT{s})) \, ds\right) \right]^{1/2} \\
  &\lec \sum_{r} |\stoich_{ri}| 
  \epsilon^{-\nu_{r}} \LipPr_r(\Sstopping) \int_{0}^{t} \Expect[\|\ScaleZ(\StopT{s})-\ScaleX(\StopT{s})\| ]\, ds \\
  &\hphantom{\leq \qquad}
  +|\stoich_{ri}|\Expect\left[  
    \left( \int_{0}^{t} \VoxVol_j \epsilon^{-\nu_r} \bar{u}_r(V_j^{-1}\ScaleX_{\cdot,j}(\StopT{s})) \, ds\right) \right]^{1/2}\\
  &\lec \sum_{r} |\stoich_{ri}| 
  \epsilon^{-\nu_{r}} \LipPr_r(\Sstopping) \int_{0}^{t} \Expect[\|\ScaleZ(\StopT{s})-\ScaleX(\StopT{s})\| ]\, ds 
  + |\stoich_{ri}| \left( t \epsilon^{-\nu_{r}} C_{\lvec} \LipPr_r(\Sstopping) \Sstopping \right)^{1/2} \\
  &\lec \sum_{r} |\stoich_{ri}| \left(
  \epsilon^{-\nu_{r}} \LipPr_r(\Sstopping) \int_{0}^{t} \Expect[\|\ScaleZ(\StopT{s})-\ScaleX(\StopT{s})\| ]\, ds 
  + \left(\epsilon^{-\nu_{r}} \LipPr_r(\Sstopping) \Sstopping \right)^{1/2} \right).
\end{align*}}

In much the same spirit we get
\begin{align*}
  \Expect[A'_2] &\lec \sum_{k = 1}^J \epsilon^{-\mu_i} \int_{0}^{t} \Expect[\|\ScaleZ(\StopT{s})-\ScaleX(\StopT{s})\| ] \, ds+ \left( \epsilon^{-\mu_i}\right)^{1/2},
\end{align*}
along with an identical bound for $\Expect[A'_3]$.

Combined we thus get for the $j$th voxel,
\begin{align}
  \nonumber
  \Expect \left[ \|\ScaleZ_{\cdot,j}(\StopT{t}) - \ScaleX_{\cdot,j}(\StopT{t})\|\right] &\lec  
  \sum_{i \in \Group_1(j)} \Expect\left[A_1 + A_2 + A_3 \right]+
  \sum_{i \in \Group_2(j)} \epsilon
  \Expect\left[A'_1 + A'_2 + A'_3 \right] \\
  \label{eq:bnd_coeffs2}
  &\hphantom{\Expect }
    \lec D_j + E_j \int_0^{t}
      \Expect[\|\ScaleX(\StopT{s})-\ScaleZ(\StopT{s})\|] \, ds.
\end{align}
Summing over $j$ we get the stated result.
\end{proof}


\section{The split-step error}
\label{sec:sserr}

The consistency of the numerical split-step method hinges on the
regularity of the kernel function $\sigma_{h}(s)$. The following lemma
(borrowed from \cite[Lemma~3.7]{jsdesplit}) paired with the strong
regularity of the involved processes provides for the order estimate
in Theorems~\ref{th:spliterr2} and \ref{th:spliterr1}. Note that the
result can be thought of as \cadlag-version of the Riemann-Lebesgue
lemma.

\begin{lemma}{(\cite[Lemma~3.7]{jsdesplit})}
  \label{lem:sigma2}
  Let $G:\Realdom^{D} \to \Realdom$ be a globally Lipschitz continuous
  function with Lipschitz constant $L$ and let $f:\Realdom \to
  \Realdom^{D}$ be a piecewise constant \cadlag\ function. Then
  \begin{align}
    \label{eq:sigma_lemma2}
    \left| \int_{0}^{t} \sigma_{h}(s) G(f(s)) \, ds \right| &\le
    \frac{h}{2} |G(f(t))|+
    \frac{h}{2} L V_{[0,t]}(f),
  \end{align}
  where the total absolute variation may be exchanged with the square
  root of the quadratic variation $[f]_{t}^{1/2}$. If $t$ is a
  multiple of $h$, then the first term on the right side of
  \eqref{eq:sigma_lemma2} vanishes.
\end{lemma}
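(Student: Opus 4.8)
The plan is to integrate the kernel once and thereby convert the oscillatory integral into a boundary term plus an integral against the jumps of $G\circ f$. First I would introduce the primitive $\Sigma_h(s):=\int_0^s\sigma_h(u)\,du$. Since $\sigma_h$ is the mean-zero square wave of period $h$, equal to $+1$ on $[kh,kh+h/2)$ and $-1$ on $[kh+h/2,(k+1)h)$, its primitive $\Sigma_h$ is the continuous, piecewise-linear triangle wave. I would record two elementary facts that carry the whole proof: (i) $0\le\Sigma_h(s)\le h/2$, whence $|\Sigma_h(s)|\le h/2$ everywhere; and (ii) $\Sigma_h(kh)=0$ at every integer multiple of $h$. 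Both are immediate from the definition.

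Next I would integrate by parts. Because $f$ is piecewise constant and \cadlag\ and $G$ is continuous, $g:=G\circ f$ is itself piecewise constant, \cadlag, and of bounded variation, while $\Sigma_h$ is continuous of bounded variation. The Stieltjes integration-by-parts formula then applies, and the cross term $\sum\Delta\Sigma_h\,\Delta g$ drops out because $\Sigma_h$ is continuous; using $\Sigma_h(0)=0$ and $g(s-)=g(s)$ off the Lebesgue-null set of jump times one obtains
\[
\int_0^t \sigma_h(s)\,g(s)\,ds
= \int_0^t g(s)\,d\Sigma_h(s)
= \Sigma_h(t)\,g(t) - \int_{(0,t]}\Sigma_h(s)\,dg(s).
\]
This is the structural heart of the argument: the oscillation integral has been rewritten as a boundary value plus an integral against the jumps of $g$.

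Finally I would estimate the two pieces using (i), (ii), and the Lipschitz property. By (i) the boundary term is bounded by $\tfrac h2|g(t)|=\tfrac h2|G(f(t))|$, and by (ii) it vanishes whenever $t$ is a multiple of $h$, which gives the final sentence of the statement. For the jump integral, again by (i),
\[
\left|\int_{(0,t]}\Sigma_h(s)\,dg(s)\right|
\le \frac{h}{2}\,V_{[0,t]}(g)
\le \frac{h}{2}\,L\,V_{[0,t]}(f),
\]
the last step since $|g(\tau_i)-g(\tau_{i-1})|\le L|f(\tau_i)-f(\tau_{i-1})|$ summed over jumps. Combining the two bounds yields the claimed inequality.

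The one step demanding genuine care is the integration by parts for a \cadlag\ integrand: tracking left limits and verifying that the continuity of $\Sigma_h$ annihilates the jump--jump term, rather than any computation. The more delicate point is the parenthetical remark that $V_{[0,t]}(f)$ may be exchanged for $[f]_t^{1/2}$. I would \emph{not} assert this as a pathwise inequality, since $[f]_t^{1/2}\le V_{[0,t]}(f)$ and one can engineer paths whose jumps all land where $\Sigma_h$ is maximal, making the would-be quadratic-variation bound fail. Instead I would invoke the substitution only where the lemma is actually used, namely inside the expectations of Theorems~\ref{th:spliterr2} and \ref{th:spliterr1}, controlling the expected variation of the underlying jump processes through the quadratic-variation estimates already available (cf.\ Lemma~\ref{lem:quadratic_variation}). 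I expect this passage from the clean pathwise bound to its expected, quadratic-variation form to be the main subtlety in deploying the lemma.
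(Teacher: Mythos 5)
The paper does not actually prove this lemma --- it is imported verbatim from \cite[Lemma~3.7]{jsdesplit} --- so there is no in-text argument to compare against; your proposal must stand on its own, and it does. The summation-by-parts route via the triangle-wave primitive $\Sigma_h$ is correct in every step: $0 \le \Sigma_h \le h/2$ with $\Sigma_h(kh)=0$, the jump--jump covariation term vanishes because $\Sigma_h$ is continuous, $g(s^-)$ may be replaced by $g(s)$ under the absolutely continuous measure $d\Sigma_h$, and $V_{[0,t]}(G\circ f)\le L\,V_{[0,t]}(f)$ for piecewise constant $f$. This yields exactly the stated constants, including the vanishing of the boundary term at multiples of $h$. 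An equally standard alternative is to pair each half-period with the next, writing $\int_{kh}^{(k+1)h}\sigma_h(s)G(f(s))\,ds=\int_{kh}^{kh+h/2}\bigl[G(f(s))-G(f(s+h/2))\bigr]\,ds$ and invoking the Lipschitz bound; both routes give the same estimate, and yours is arguably the cleaner bookkeeping.

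Your caution about the parenthetical quadratic-variation claim is well placed: as a pathwise inequality with the same constant it genuinely fails for arbitrary piecewise constant $f$ (take $n$ jumps of size $1/n$ clustered where $\Sigma_h$ attains its maximum and $t$ a multiple of $h$: the integral equals $h/2$ in modulus while $\tfrac{h}{2}L[f]_t^{1/2}=\tfrac{h}{2}n^{-1/2}\to 0$). So the exchange of $V_{[0,t]}(f)$ for $[f]_t^{1/2}$ cannot be a free corollary of the displayed bound; it must be justified where the lemma is deployed, using the structure of the jump processes at hand (bounded stoichiometric increments) and the expectation-level control of $\Expect\bigl([\,\cdot\,]_t^{1/2}\bigr)$ of the type supplied by Lemma~\ref{lem:quadratic_variation}. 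That is precisely how the bound enters Lemmas~\ref{lem:SplitStepCVSquare} and \ref{lem:SplitStepCV}, so deferring the substitution to that point, as you propose, is the right resolution rather than a gap.
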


The proofs of the following two lemmas follow closely the ideas in the
proofs of Lemmas~\ref{lem:PDMPCVSquare} and \ref{lem:PDMPCV}, but
using in addition Lemma~\ref{lem:sigma2} and Theorem~\ref{th:exist3}
to bound certain additional terms.

\begin{lemma}
  \label{lem:SplitStepCVSquare}
  Define $\ScaleZ(t)$ by
  \eqref{eq:RDMEPoissrepr_ex1}--\eqref{eq:RDMEPoissrepr_ex2} and
  $\ScaleY(t)$ by
  \eqref{eq:RDMEPoissrepr_num1}--\eqref{eq:RDMEPoissrepr_num2} with
  $\ScaleZ(0) = \ScaleY(0)$ almost surely. The under the stopping time
  $\StopT{t}$ defined in \eqref{eq:StopT}, for a fixed $\epsilon$ and
  $h$ small enough,
  \begin{align*}
    \Expect[\|\ScaleZ(\StopT{t})-\ScaleY(\StopT{t})\|^2]
    &\lec A + B \int_{0}^{t}
    \Expect[\|\ScaleZ(\StopT{s}) - \ScaleY(\StopT{s})\|^2 ] \, ds \\
    &\hphantom{\lec A}+ C \int_{0}^{t}
    \Expect[\|\ScaleZ(\StopT{s}) - \ScaleY(\StopT{s})\| ] \, ds,
  \end{align*}
  where 
  \begin{align*}
    A &= \epsilon^u \LipPr(\Sstopping)
    [\epsilon^u\LipPr(\Sstopping)\Sstopping+1] h+
    \epsilon^{2v} [\LipPr(\Sstopping)]^2 h^2, \\
    B &= \epsilon^{2v} [\LipPr(\Sstopping)]^2, \quad
    C = \epsilon^u\LipPr(\Sstopping)
    [\epsilon^u\LipPr(\Sstopping)\Sstopping+1].
  \end{align*}
\end{lemma}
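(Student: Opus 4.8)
The plan is to mirror the voxel-by-voxel, group-by-group dissection of Lemmas~\ref{lem:PDMPCVSquare} and \ref{lem:PDMPCV}, but now comparing $\ScaleZ$ from \eqref{eq:RDMEPoissrepr_ex1}--\eqref{eq:RDMEPoissrepr_ex2} against $\ScaleY$ from \eqref{eq:RDMEPoissrepr_num1}--\eqref{eq:RDMEPoissrepr_num2}, the only structural difference being the insertion of the factor $(1\pm\sigma_h)$. Fixing a voxel $j$ and a species $i\in\Group_1(j)$, each reaction contributes the operational-time difference
\begin{align*}
  \int_0^{\StopT{t}}\VoxVol_j\epsilon^{-\nu_r}\bar u_r(\VoxVol_j^{-1}\ScaleZ_{\cdot,j})\,ds
  -\int_0^{\StopT{t}}(1+\sigma_h)\VoxVol_j\epsilon^{-\nu_r}\bar u_r(\VoxVol_j^{-1}\ScaleY_{\cdot,j})\,ds,
\end{align*}
which I would split into a \emph{state-difference} piece $\int\VoxVol_j\epsilon^{-\nu_r}[\bar u_r(\ScaleZ_{\cdot,j})-\bar u_r(\ScaleY_{\cdot,j})]\,ds$ plus a \emph{kernel} piece $-\int\sigma_h\VoxVol_j\epsilon^{-\nu_r}\bar u_r(\ScaleY_{\cdot,j})\,ds$; the transport terms split identically. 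The state-difference pieces are handled verbatim as in Lemma~\ref{lem:PDMPCVSquare}: fed through the Poisson $L^2$ bound \eqref{eq:poisson_rmk} with the a~priori operational-time bound $\lec\epsilon^u\LipPr(\Sstopping)\Sstopping$, and using $-\nu_r\ge u$, $-\mu_i\ge u$ on $\Group_1$ together with $1-\nu_r\ge v$, $1-\mu_i\ge v$ on $\Group_2$ from \eqref{eq:udef}--\eqref{eq:vdef}, they reproduce exactly the coefficients $B=\epsilon^{2v}[\LipPr(\Sstopping)]^2$ and $C=\epsilon^u\LipPr(\Sstopping)[\epsilon^u\LipPr(\Sstopping)\Sstopping+1]$, which are indeed unchanged from the multiscale lemma.

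The genuinely new work is the kernel pieces, which is where Lemma~\ref{lem:sigma2} and Theorem~\ref{th:exist3} enter. For a $\Group_1$ reaction I would identify $G(\cdot)=\VoxVol_j\epsilon^{-\nu_r}\bar u_r(\VoxVol_j^{-1}\cdot)$, globally Lipschitz under the stopping time with constant $\lec\epsilon^u\LipPr(\Sstopping)$ and with $|G|\lec\epsilon^u[1+\LipPr(\Sstopping)\Sstopping]$, and invoke Lemma~\ref{lem:sigma2} to bound the kernel piece by a term of order $h$ (boundary term plus Lipschitz constant times the variation of $\ScaleY_{\cdot,j}$, the latter controlled through the regularity of Theorem~\ref{th:exist3}). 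Passing this $O(h)$ estimate on $\Expect[|\,\text{kernel}\,|]$ through \eqref{eq:poisson_rmk} yields the first, $O(h)$, contribution to $A$. For $i\in\Group_2(j)$ the dynamics carries the extra prefactor $\epsilon$ and is purely deterministic, so the kernel piece appears squared rather than under a Poisson increment; Lemma~\ref{lem:sigma2} bounds it by $O(h)$ and, after squaring and using $\epsilon^{1-\nu_r}\le\epsilon^v$, produces the $\epsilon^{2v}[\LipPr(\Sstopping)]^2h^2$ contribution to $A$. Summing the per-voxel estimates over $j$ then assembles $A$, $B$, and $C$; tracking the precise constants is routine bookkeeping of $\epsilon$-powers through the $\min$/$\wedge$ structure of $u$ and $v$.

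The hard part will be the application of Lemma~\ref{lem:sigma2} to the kernel pieces, because that lemma demands a piecewise-constant \cadlag\ integrand, whereas $\bar u_r(\VoxVol_j^{-1}\ScaleY_{\cdot,j})$ is piecewise constant only in its $\Group_1$ (jump) coordinates: on the half-steps where $\sigma_h=-1$ the $\Group_2$ coordinates of $\ScaleY$ drift continuously, so the propensity is not globally piecewise constant. I would resolve this by freezing $\ScaleY$ at the left endpoint of each half-step, applying Lemma~\ref{lem:sigma2} to the resulting piecewise-constant surrogate $\tilde{\ScaleY}$, and absorbing the freezing error $\int|\sigma_h|\,|\bar u_r(\ScaleY_{\cdot,j})-\bar u_r(\tilde{\ScaleY}_{\cdot,j})|\,ds\lec\LipPr(\Sstopping)\int\|\ScaleY-\tilde{\ScaleY}\|\,ds$ into a further $O(h)$ term, using that the continuous drift of $\ScaleY$ over one half-step is itself $O(h)$ by Theorem~\ref{th:exist3}. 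This device is precisely what forces the restriction to $h$ small enough, and it is the one place where the argument departs nontrivially from the proofs of Lemmas~\ref{lem:PDMPCVSquare} and \ref{lem:PDMPCV}.
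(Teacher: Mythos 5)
Your proposal follows exactly the route the paper indicates for this lemma: the paper records no detailed proof, saying only that one repeats the arguments of Lemmas~\ref{lem:PDMPCVSquare} and \ref{lem:PDMPCV} while invoking Lemma~\ref{lem:sigma2} and Theorem~\ref{th:exist3} for the additional terms, and your state-difference/kernel decomposition with the resulting bookkeeping of the coefficients $A$, $B$, $C$ is precisely that argument carried out. Your further observation that $\ScaleY$ is not piecewise constant in its $\Group_2$ coordinates---so that Lemma~\ref{lem:sigma2} does not apply verbatim---and the left-endpoint freezing device you use to repair this (equivalently, one may note that the proof of Lemma~\ref{lem:sigma2} only needs a \cadlag\ integrand of bounded variation) is a correct treatment of a point the paper passes over in silence.
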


\begin{lemma}
  \label{lem:SplitStepCV}
  Under the same assumptions as in Lemma~\ref{lem:SplitStepCVSquare},
  \begin{align*}
    \Expect[\|\ScaleZ(\StopT{t})-\ScaleY(\StopT{t})\|]
    &\lec D + E \int_{0}^{t}
    \Expect[\|\ScaleZ(\StopT{s}) - \ScaleY(\StopT{s})\| ] \, ds,
  \end{align*}
  with
  \begin{align*}
    D &= [\epsilon^u + \epsilon^v]\LipPr(\Sstopping) h, \quad
    E =  [\epsilon^u + \epsilon^v]\LipPr(\Sstopping).
  \end{align*}
\end{lemma}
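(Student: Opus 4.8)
The plan is to reproduce the argument of Lemma~\ref{lem:PDMPCV} almost verbatim, now comparing $\ScaleZ$ against $\ScaleY$ instead of $\ScaleX$ against $\ScaleZ$. Fixing a voxel $j$ and splitting the species into $\Group_1$ and $\Group_2$, I would expand $|\ScaleZ_{ij}(\StopT{t}) - \ScaleY_{ij}(\StopT{t})|$ directly from \eqref{eq:RDMEPoissrepr_ex1}--\eqref{eq:RDMEPoissrepr_ex2} and \eqref{eq:RDMEPoissrepr_num1}--\eqref{eq:RDMEPoissrepr_num2}. The decisive observation is that $\ScaleZ$ and $\ScaleY$ differ \emph{only} through the kernel weights: each reaction and transport integrand carries weight $1$ in $\ScaleZ$, but $(1+\sigma_h(s))$ (for $i \in \Group_1$) or $(1-\sigma_h(s))$ (for $i \in \Group_2$) in $\ScaleY$. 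Hence every operational-time difference splits as
\[
  \int_0^{\StopT{t}} c_r\big[\bar{u}_r(\ScaleZ_{\cdot,j}) - \bar{u}_r(\ScaleY_{\cdot,j})\big]\,ds
  \mp \int_0^{\StopT{t}} \sigma_h(s)\,c_r\,\bar{u}_r(\ScaleY_{\cdot,j})\,ds,
\]
where $c_r$ absorbs the relevant $\epsilon^{-\nu_r}$, $\epsilon^{-\mu_i}$, and $\VoxVol_j$ factors. The first integral is handled by the local Lipschitz bound \eqref{eq:lipS} and yields the Gronwall term with coefficient $E$; the second is the genuine splitting contribution and is the only source of the factor $h$.

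For the stochastic group $\Group_1$ the differences sit inside the Poisson processes, so I would first take expectations and invoke Lemma~\ref{lem:Stopping}~\eqref{eq:Stopping1} — exactly as in Lemma~\ref{lem:PDMPCV} — to replace $\Expect\big|\Pi_{rj}(\cdot) - \Pi_{rj}(\cdot)\big|$ by the expected absolute difference of the two operational times, and only then apply the splitting above. For the deterministic group $\Group_2$ both processes are already absolutely continuous, so the decomposition is immediate, now carrying the additional prefactor $\epsilon$ seen in \eqref{eq:RDMEPoissrepr_ex2}--\eqref{eq:RDMEPoissrepr_num2}; this extra $\epsilon$ is what upgrades the scaling from $\epsilon^u$ on $\Group_1$ to $\epsilon^v$ on $\Group_2$.

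The one genuinely new ingredient, absent from Lemmas~\ref{lem:PDMPCVSquare}--\ref{lem:PDMPCV}, is the treatment of the $\sigma_h$-weighted integrals. For these I would apply Lemma~\ref{lem:sigma2} with $G \equiv c_r\,\bar{u}_r(\VoxVol_j^{-1}\ScaleS\,\cdot)$ and $f \equiv \ScaleY_{\cdot,j}$. Under the stopping time $\StopT{t}$ the state is bounded by $\Sstopping$, so $G$ is globally Lipschitz with constant $\lec c_r \LipPr_r(\Sstopping)$ and $|G(f(\StopT{t}))| \lec c_r\LipPr_r(\Sstopping)\Sstopping$, while the variation factor $V_{[0,\StopT{t}]}(\ScaleY_{\cdot,j})$ (equivalently $[\ScaleY_{\cdot,j}]^{1/2}$) is bounded uniformly in $\epsilon$ and $h$ by the split-step regularity of Theorem~\ref{th:exist3}, after passing between $\lnorm{\cdot}$ and $\pnorm{\cdot}$ via norm equivalence. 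The bound \eqref{eq:sigma_lemma2} then produces a term of order $h\,c_r\LipPr_r(\Sstopping)$; collecting $c_r$ over the two groups using $-\nu_r,-\mu_i \ge u$ on $\Group_1$ and $1-\nu_r,1-\mu_i \ge v$ (with the extra $\epsilon$) on $\Group_2$ gives the advertised $D = [\epsilon^u + \epsilon^v]\LipPr(\Sstopping)h$, and summing over voxels with the connectivity bound of Definition~\ref{def:mesh}~\eqref{eq:connectivity} completes the estimate.

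I expect the main obstacle to be the correct application of Lemma~\ref{lem:sigma2}: one must certify that the composite $G$ is truly Lipschitz with the claimed constant and has a bounded terminal value on the stopped interval, and — more delicately — that the variation $V_{[0,\StopT{t}]}(\ScaleY_{\cdot,j})$ is controlled \emph{uniformly} in both $\epsilon$ and $h$. This uniformity is precisely what Theorem~\ref{th:exist3} was engineered to deliver, but matching its $\lvec$-norm moment bounds to the variation quantity appearing in \eqref{eq:sigma_lemma2}, tracking the exact powers of $\Sstopping$, and confirming (using the vanishing of the boundary term in Lemma~\ref{lem:sigma2} when the endpoint aligns with the grid, valid for $h$ small) that no additional $h$- or $\epsilon$-dependence is introduced, is the step that needs the most care.
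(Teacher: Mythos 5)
Your proposal follows essentially the same route as the paper: the authors explicitly state that Lemmas~\ref{lem:SplitStepCVSquare} and \ref{lem:SplitStepCV} are proved by repeating the argument of Lemmas~\ref{lem:PDMPCVSquare}--\ref{lem:PDMPCV} (voxel-by-voxel, splitting $\Group_1$/$\Group_2$, Lemma~\ref{lem:Stopping} for the Poisson differences, Lipschitz plus Gronwall for the main term) with the additional $\sigma_h$-weighted integrals handled by Lemma~\ref{lem:sigma2} and the uniform regularity of Theorem~\ref{th:exist3}, which is exactly your decomposition. Your identification of where each factor of $h$, $\epsilon^u$, and $\epsilon^v$ arises matches the stated coefficients, so the proposal is correct and consistent with the paper's (sketched) proof.
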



\newcommand{\doi}[1]{\href{http://dx.doi.org/#1}{doi:#1}}
\newcommand{\available}[1]{Available at \url{#1}}
\newcommand{\availablet}[2]{Available at \href{#1}{#2}}

\bibliographystyle{abbrvnat}
\bibliography{references}

\end{document}